\documentclass{amsart}

\usepackage{amsmath}
\usepackage{amsthm}
\usepackage{amsfonts}
\usepackage{amssymb}
\usepackage{enumerate}
\usepackage{graphicx}
\usepackage{color}
\usepackage{float}

\newcommand{\E}{\mathbb{E}}
\newcommand{\Prob}{\mathbb{P}}

\newcommand{\R}{\mathbb{R}}


\newcommand{\eps}{\varepsilon}

\theoremstyle{plain}
  \newtheorem{theorem}{Theorem}

  \newtheorem{proposition}[theorem]{Proposition}
  \newtheorem{fact}[theorem]{Fact}
  \newtheorem{lemma}[theorem]{Lemma}
  \newtheorem{corollary}[theorem]{Corollary}
  
  \newtheorem{question}[theorem]{Question}

\theoremstyle{definition}
  \newtheorem{definition}[theorem]{Definition}
  \newtheorem{example}[theorem]{Example}
  \newtheorem{remark}[theorem]{Remark}

\begin{document}
\title[Random perturbation of low rank matrices]{Random perturbation of low rank matrices: Improving  classical bounds}

\author[S. O'Rourke]{Sean O'Rourke}
\thanks{S. O'Rourke is supported by grant AFOSAR-FA-9550-12-1-0083.}
\address{Department of Mathematics, University of Colorado at Boulder, Boulder, CO 80309 }
\email{sean.d.orourke@colorado.edu}

\author{Van Vu}
\thanks{V. Vu is supported by research grants DMS-0901216 and AFOSAR-FA-9550-09-1-0167.}
\address{Department of Mathematics, Yale University, PO Box 208283, New Haven , CT 06520-8283, USA}
\email{van.vu@yale.edu}

\author{Ke Wang}
\address{Institute for Mathematics and its Applications, University of Minnesota, Minneapolis, MN  55455, USA}
\email{wangk@umn.edu}

\begin{abstract}
Matrix perturbation inequalities, such as Weyl's theorem  (concerning the singular values) and the Davis-Kahan  theorem (concerning the singular vectors), play essential roles in  quantitative science; in particular, these bounds have found application in data analysis as well as related areas of engineering and computer science.

In many situations, the perturbation is assumed to be random, and the original matrix  has certain  structural properties (such as having low rank). We show that, in this scenario,  classical perturbation results, such as Weyl and Davis-Kahan, can be improved significantly.  We  believe many of  our new bounds are close to optimal and also discuss some applications. 
 \end{abstract}
 
 \subjclass[2010]{65F15 and 15A42} 
 
 \keywords{Singular values, singular vectors, singular value decomposition, random perturbation, random matrix}

\maketitle

\section{Introduction}

The singular value decomposition of a real $m \times n$ matrix $A$ is a factorization of the form $A = U \Sigma V^\mathrm{T}$, where $U$ is a $m \times m$ orthogonal matrix, $\Sigma$ is a $m \times n$ rectangular diagonal matrix with non-negative real numbers on the diagonal, and $V^\mathrm{T}$ is an $n \times n$ orthogonal matrix.  The diagonal entries of $\Sigma$ are known as the \emph{singular values} of $A$.  The $m$ columns of $U$ are the \emph{left-singular vectors} of $A$, while the $n$ columns of $V$ are the \emph{right-singular vectors} of $A$.  If $A$ is symmetric, the singular values are given by the absolute value of the eigenvalues, and the singular vectors can be expressed in terms of the eigenvectors of $A$.  Here, and in the sequel, whenever we write \emph{singular vectors}, the reader is free to interpret this as left-singular vectors or right-singular vectors provided the same choice is made throughout the paper.  

An important problem in statistics and numerical analysis is to compute the first $k$ singular values and vectors of an $m \times n$ matrix $A$.  In particular, the largest few singular values and corresponding singular vectors are typically the most important.  Among others, this problem lies at the heart of Principal Component Analysis (PCA), which has a very wide range of applications (for many examples, see \cite{KVbook, LR} and the references therein) and in the closely related low rank approximation procedure often used in theoretical computer science and combinatorics.   In application, the dimensions $m$ and $n$ are typically large and $k$ is small, often a fixed constant. 
 
 \subsection{The perturbation problem}
A problem of fundamental importance in quantitative science (including pure  and applied mathematics, statistics, engineering, and computer science) is to estimate how a small perturbation to the data effects the singular values and singular vectors. This problem has been discussed  in virtually every  text book on quantitative linear algebra and numerical analysis  (see, for instance,  \cite{BT, Hig1, Hig2, SS}), and is the main focus of this paper.  

We model the problem as follows.  Consider a real (deterministic) $m \times n$ matrix $A$ with singular values 
$$\sigma_1 \geq \sigma_2  \geq \cdots \geq \sigma_{\min\{m,n\}} \geq 0$$
and  corresponding  singular vectors  $v_1, v_2, \ldots, v_{\min\{m,n\}}.$ We will call $A$ the data matrix.  In general, the vector $v_i$ is not unique.  However, if $\sigma_i$ has multiplicity one, then $v_i$ is determined up to sign.  Instead of $A$, one often needs to work with  $A+E$, where $E$ represents the perturbation matrix.  Let 
$$ \sigma_1' \geq \cdots \geq \sigma_{\min\{m,n\}}' \geq 0 $$ 
denote the singular values of $A+E$ with corresponding singular vectors $v_1', \ldots, v_{\min\{m,n\}}'$.  In this paper, we address the following two questions.  

\begin{question}
	When is $v_i'$ a good approximation of $v_i$?
\end{question}  
\begin{question} \label{quest:weyl}
	When is $\sigma_i'$ a good approximation of $\sigma_i$?  
\end{question}

These two questions are classically addressed by the Davis-Kahan-Wedin sine theorem and Weyl's inequality.  
Let us begin with the first question in the case when $i=1$.  A canonical  way (coming from the numerical analysis literature; see for instance  \cite{GVL})  
to measure the distance between two unit vectors $v$ and $v'$ is to look at $ \sin \angle(v,v')$,
where $\angle(v,v')$ is the angle between $v$ and $v'$ taken in $[0,\pi/2]$. 
It has been observed by numerical analysts (in the setting where $E$ is deterministic) for quite some time  that the key parameter to consider in the bound is the gap (or separation) $\sigma_1 - \sigma_2'$.  The first result in this direction is the famous Davis-Kahan sine $\theta$ theorem \cite{DK} for Hermitian matrices. 
A version for the singular vectors was proved later by Wedin \cite{W}.  

Throughout the paper, we use $\|M\|$ to denote the spectral norm of a matrix $M$.  That is, $\|M\|$ is the largest singular value of $M$. 

\begin{theorem}[Davis-Kahan, Wedin; sine theorem; Theorem V.4.4 from \cite{SS}] \label{thm:wedin}
\begin{equation}\label{eq:DK}
	\sin \angle(v_1,v_1') \leq \frac{\|E\|}{\sigma_1-\sigma_2'}.
\end{equation}
\end{theorem} 

In certain cases, such as when $E$ is random, it is more natural to deal with the gap 
\begin{equation} \label{def:delta}
	\delta := \sigma_1 - \sigma_2,
\end{equation} 
between the first and second singular values of $A$ instead of $\sigma_1 - \sigma_2'$.  In this case, Theorem \ref{thm:wedin} implies the following bound.   

\begin{theorem}[Modified sine theorem] \label{thm:modified:wedin}
$$ \sin \angle(v_1, v_1') \leq 2 \frac{\|E\|}{\delta}. $$
\end{theorem} 

\begin{remark}
Theorem \ref{thm:modified:wedin} is trivially true when $\delta \leq 2 \|E\|$ since sine is always bounded above by one.  In other words, even if the vector $v_1'$ is not uniquely determined, the bound is still true for any choice of $v_1'$.  On the other hand, when $\delta > 2 \|E\|$, the proof of Theorem \ref{thm:modified:wedin} reveals that the vector $v_1'$ is uniquely determined up to sign. 
\end{remark}
 
As the next example shows, the bound in Theorem \ref{thm:modified:wedin} is sharp, up to the constant $2$.
 
\begin{example}
Let $0 < \eps < 1/2$, and take
$$ A := \begin{pmatrix} 1 + \eps & 0 \\ 0 & 1 - \eps \end{pmatrix}, \quad E := \begin{pmatrix} - \eps & \eps \\ \eps & \eps \end{pmatrix}. $$
Then $\sigma_1 = 1 + \eps$, $\sigma_2 = 1 - \eps$ with $v_1 = (1, 0)^\mathrm{T}$ and $v_2 = (0,1)^\mathrm{T}$.  Hence, $\delta = 2 \eps$.  In addition, 
$$ A + E = \begin{pmatrix} 1 & \eps \\ \eps & 1 \end{pmatrix}, $$
and a simple computation reveals that
$\sigma_1' = 1 + \eps$, $\sigma_2' = 1 - \eps$ but $v_1' = (1/\sqrt{2}, 1/\sqrt{2})^{\mathrm{T}}$ and $v_2' = (1/\sqrt{2}, - 1/\sqrt{2})^\mathrm{T}$.  Thus,
$$ \sin \angle(v_1, v_1') = \frac{1}{\sqrt{2}} = \frac{ \|E \|}{\delta} $$
since $\|E\| = \sqrt{2} \eps$.  
\end{example}

More generally, one can consider approximating the $i$-th singular vector $v_i$ or the space spanned by the first $i$ singular vectors $\mathrm{Span}\{v_1, \ldots, v_i \}$.  Naturally, in these cases, a version of Theorem \ref{thm:modified:wedin} requires one to consider the gaps
$$ \delta_i := \sigma_i  - \sigma_{i+1}; $$
see Theorems \ref{thm:modified:wedin3} and \ref{thm:modified:wedin2} below for details.  

Question \ref{quest:weyl} is addressed by Weyl's inequality.  In particular, Weyl's perturbation theorem \cite{Wy} gives the following deterministic bound for the singular values (see \cite[Theorem IV.4.11]{SS} for a more general perturbation bound due to Mirsky \cite{M}).  

\begin{theorem} [Weyl's bound]  \label{theorem:Weyl} 
\begin{equation*}  
	\max_{1 \leq i\leq \min\{m,n\}} | \sigma_i -\sigma_i'| \le \|E \|. 
\end{equation*} 
\end{theorem}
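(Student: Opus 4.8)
The plan is to deduce the uniform bound from the pointwise estimate $|\sigma_i - \sigma_i'| \le \|E\|$, valid for each fixed $i$; since the right-hand side does not depend on $i$, the claimed bound then follows by taking the maximum over $i$. We may assume throughout that $m \ge n$, since transposing $A$ (and $E$) leaves all singular values and the norm $\|E\|$ unchanged; then $\min\{m,n\} = n$, and the right singular vectors of a given real $m \times n$ matrix form an orthonormal basis of $\R^n$.

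The one tool needed is the Courant--Fischer min-max characterization of singular values: for a real $m \times n$ matrix $M$ with right singular vectors $v_1(M), \dots, v_n(M)$,
$$ \sigma_i(M) = \max_{\substack{S \subseteq \R^n \\ \dim S = i}} \ \min_{\substack{x \in S,\ \|x\| = 1}} \|Mx\|, \qquad 1 \le i \le n. $$
I would first establish this identity. For the ``$\ge$'' direction, take $S = \mathrm{Span}\{v_1(M), \dots, v_i(M)\}$ and expand a unit vector $x \in S$ in this basis: since the images $Mv_j(M)$ are orthogonal with norms $\sigma_j(M)$, one gets $\|Mx\|^2 = \sum_{j=1}^i c_j^2\, \sigma_j(M)^2 \ge \sigma_i(M)^2$. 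For the ``$\le$'' direction, given any $i$-dimensional subspace $S$, the inequality $\dim S + (n - i + 1) = n + 1 > n$ forces $S$ to meet $\mathrm{Span}\{v_i(M), \dots, v_n(M)\}$ in a nonzero vector $x$; expanding $x$ in the singular basis gives $\|Mx\| \le \sigma_i(M)$, so the inner minimum over the unit sphere of $S$ is at most $\sigma_i(M)$.

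With the characterization in hand, the perturbation bound is a two-line argument. For any $i$-dimensional subspace $S$ and any unit vector $x \in S$, the triangle inequality yields $\|(A+E)x\| \ge \|Ax\| - \|Ex\| \ge \|Ax\| - \|E\|$, hence
$$ \min_{\substack{x \in S,\ \|x\|=1}} \|(A+E)x\| \ \ge\ \Big(\min_{\substack{x \in S,\ \|x\|=1}} \|Ax\|\Big) - \|E\|. $$
Taking the maximum over all $i$-dimensional $S$ gives $\sigma_i' \ge \sigma_i - \|E\|$. Interchanging the roles of $A$ and $A + E$ (the perturbation is now $-E$, of the same norm) gives $\sigma_i \ge \sigma_i' - \|E\|$, and together these yield $|\sigma_i - \sigma_i'| \le \|E\|$, completing the proof.

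The only step carrying any content is the min-max identity; everything after it is elementary, and I anticipate no genuine obstacle. If one prefers to avoid proving the singular-value min-max formula from scratch, an equivalent route is to pass to the Hermitian dilation $\tilde{M} = \begin{pmatrix} 0 & M \\ M^{\T} & 0 \end{pmatrix}$, whose eigenvalues are $\pm\sigma_1(M), \dots, \pm\sigma_n(M)$ together with $m-n$ zeros, and to invoke the Hermitian case of Weyl's inequality for $\tilde{A}$ and $\tilde{A} + \tilde{E}$, using $\|\tilde{E}\| = \|E\|$. I would present the self-contained min-max argument above.
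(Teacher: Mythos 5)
The paper does not prove this statement at all: it is quoted as Weyl's classical perturbation theorem with references to \cite{Wy} and \cite[Theorem IV.4.11]{SS}, and is then used as a black box in the proof of Theorem \ref{wedin}. Your argument is the standard and correct proof. The Courant--Fischer characterization of $\sigma_i$ is stated and proved correctly in both directions (the dimension count $\dim S + (n-i+1) > n$ for the upper bound is the right device), the reduction to $m \ge n$ by transposition is legitimate, and the two-sided bound follows correctly by applying the one-sided estimate to the pair $(A, E)$ and then to the pair $(A+E, -E)$. One small remark: the Hermitian-dilation route you mention only in passing is actually the one closest in spirit to the machinery the paper itself builds later --- the symmetrized matrices $\tilde A$, $\tilde E$ of Section \ref{sec:prelim}, the identity $\|\tilde E\| = \|E\|$ in \eqref{eq:normE}, and the Courant minimax principle as used in Lemma \ref{lemma:j-largest} --- so either presentation would sit naturally alongside the paper's arguments.
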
 

For more discussions concerning general perturbation bounds, we refer the reader to \cite{B, SS} and references therein.  We now pause for a moment to prove Theorem \ref{thm:modified:wedin}.  

\begin{proof}[Proof of Theorem \ref{thm:modified:wedin}]
If $\delta \leq 2 \|E\|$, the theorem is trivially true since sine is always bounded above by one.  Thus, assume $\delta > 2 \|E\|$.  By Theorem \ref{theorem:Weyl}, we have
$$ \sigma_1' - \sigma_2' \geq \delta - 2 \|E\| > 0, $$
and hence the singular vectors $v_1$ and $v_1'$ are uniquely determined up to sign.  By another application of Theorem \ref{theorem:Weyl}, we obtain
$$ \delta = \sigma_1 - \sigma_2 \leq \sigma_1 - \sigma_2' + \|E\|. $$
Rearranging the inequality, we have
$$ \sigma_1 - \sigma_2' \geq \delta - \|E\| \geq \frac{1}{2} \delta > 0. $$
Therefore, by \eqref{eq:DK}, we conclude that
$$ \sin \angle(v_1, v_1') \leq \frac{\|E\|}{ \sigma_1 - \sigma_2'} \leq 2 \frac{\|E\|}{ \delta}, $$
and the proof is complete.  
\end{proof}

\subsection{The random setting}
Let us now focus on the matrices  $A$ and $E$. 
It has become common practice to assume that the perturbation matrix $E$ is random. 
Furthermore,  researchers have observed that 
data matrices are usually  not arbitrary.  They  often possess  certain  structural properties. Among these properties, one of the most frequently seen is having low rank (see, for instance, \cite{CP, CR, CRT, CS, TK} and references therein).  

The goal in this paper is to show that in this situation, one can significantly improve classical results like Theorems \ref{thm:modified:wedin} and \ref{theorem:Weyl}.
To give a quick example, let us assume  that  $A$ and $E$ are $n \times n$ matrices and that $E$ is a random Bernoulli matrix, i.e., its entries are independent and identically distributed (iid) random variables that take values $\pm 1$ with probability $1/2$.  It is well known that in this case 
 $\|E\|= (2+o(1)) \sqrt n $ with high probability\footnote{We use asymptotic notation under the assumption that $n \to \infty$.  Here we use $o(1)$ to denote a term which tends to zero as $n$ tends to infinity.} \cite[Chapter 5]{BS}.  Thus, the above two theorems imply the following. 
\begin{corollary} \label{wedin-cor} 
If $E$ is an $n \times n$ Bernoulli\footnote{More generally, Corollary \ref{wedin-cor} applies to a large class of random matrices with independent entries.  Indeed, the results in \cite[Chapter 5]{BS} and hence Corollary \ref{wedin-cor} hold when $E$ is any $n \times n$ random matrix whose entries are iid random variables with zero mean, unit variance (which is just a matter of normalization), and bounded fourth moment.  } random matrix, then, for any $\eta > 0$, with probability $1-o(1)$,
\begin{equation*}  \max_{1 \leq i \leq n} |\sigma_i  -\sigma _i'| \le (2 + \eta) \sqrt n, \end{equation*} 
and
\begin{equation} \label{bound0}  \sin \angle(v_1, v_1') \leq 2 (2+\eta) \frac{\sqrt n }{\delta}. \end{equation}
\end{corollary}

Among others, this shows that we must have $\delta > 2 (2 + \eta) \sqrt{n}$ in order for the bound in \eqref{bound0} to be nontrivial.  It turns out that the bounds in Corollary \ref{wedin-cor} are far from being sharp.  Indeed, we present the results of a numerical simulation for $A$ being a $n \times n$ matrix of rank 2 when $n=400$, $\delta=8$, and where $E$ is a random Bernoulli matrix.  
It is easy to see that for the parameters $n=400$ and $\delta =8$,   Corollary \ref{wedin-cor} does not give  a useful bound
(since $\frac{\sqrt{n}}{\delta} = 2.5 >1$). However, Figure \ref{young} shows that,  with high probability, $\sin \angle(v_1,v_1') \leq 0.2$, which means 
$v_1'$ approximates $v_1$ with a relatively small error.  Our main results attempt to address this inefficiency in the Davis-Kahan-Wedin and Weyl bounds and provide sharper bounds than those given in Corollary \ref{wedin-cor}.  As a concrete example, in the case when $E$ is a random Bernoulli matrix, our results imply the following bounds.

\begin{figure}[!t]
 \begin{center}
   \includegraphics[width=9cm]{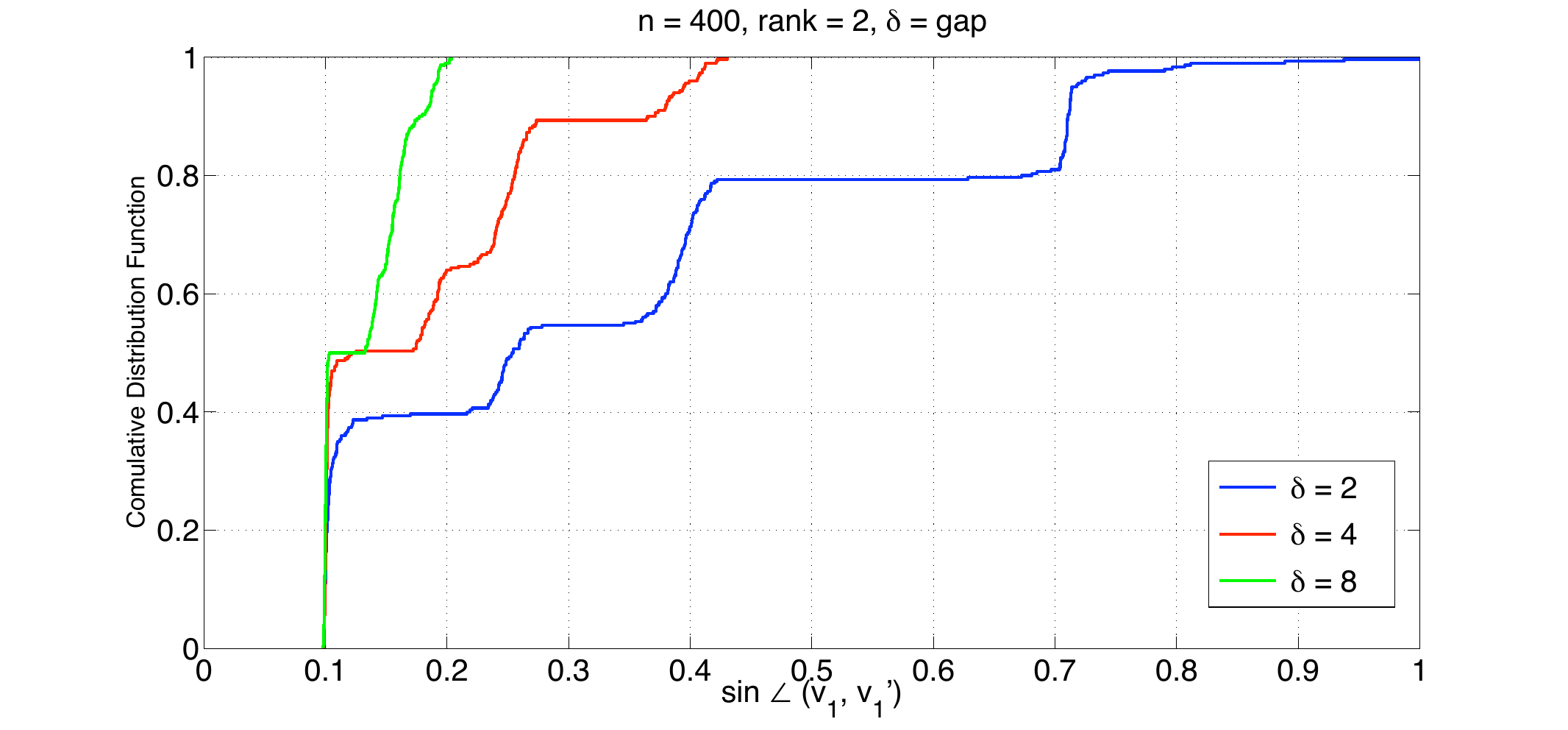}
   \includegraphics[width=9cm]{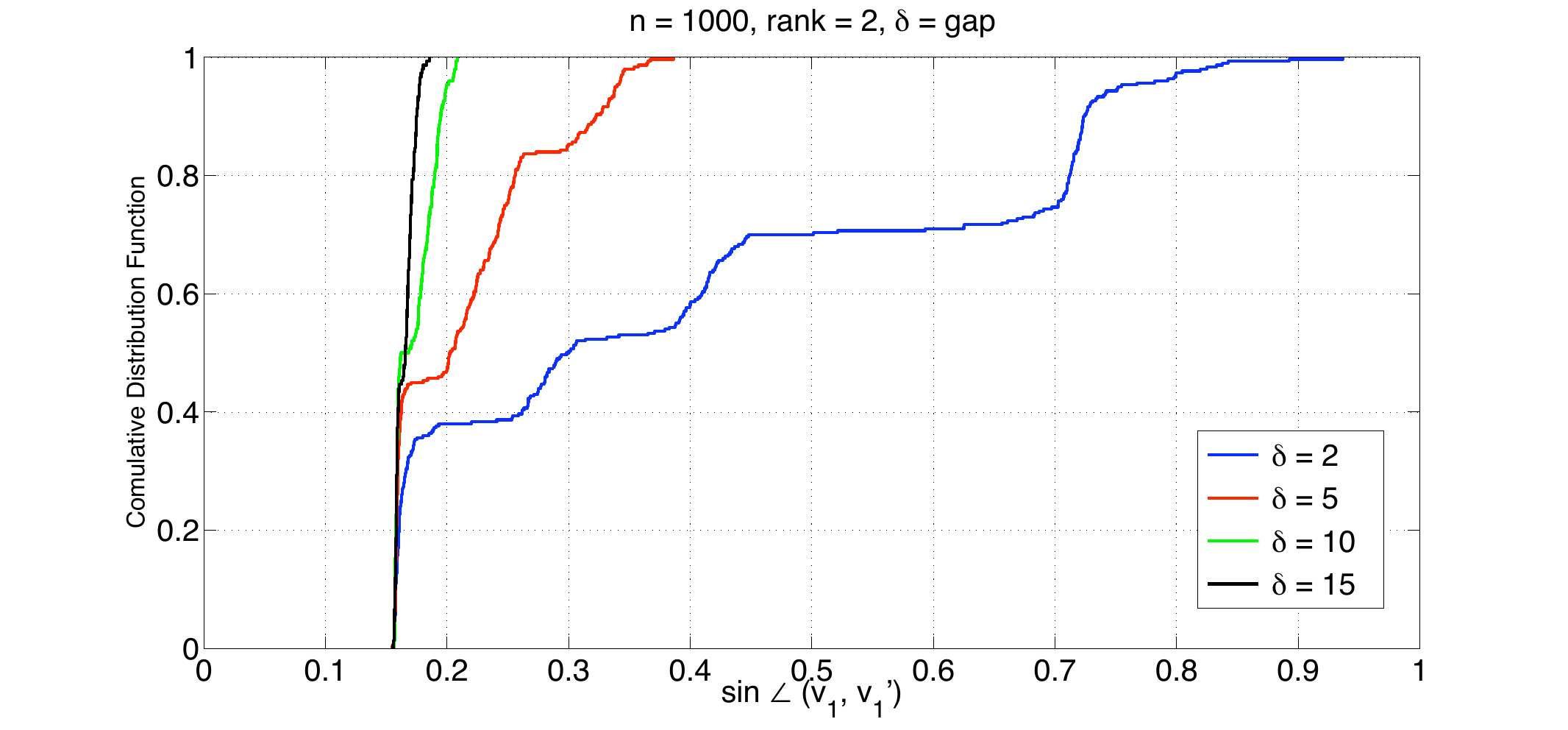}
   \caption{The cumulative distribution functions of $\sin \angle(v_1, v_1')$ where $A$ is a $n \times n$ deterministic matrix with rank $2$ ($n=400$ for the figure on top and $n=1000$ for the one below) and the noise $E$ is a Bernoulli random matrix, evaluated from $400$ samples (top figure) and $300$ samples (bottom figure). In both figures, the largest singular value of $A$ is taken to be $200$.}
   \label{young}
 \end{center}
\end{figure}

 \begin{theorem} \label{theorem:main1} Let $ E$ be a $n \times n$ Bernoulli random matrix, and let $A$ be a $n \times n$ matrix with rank $r$.  For every $\varepsilon > 0$ there exists constants $C_0, \delta_0 > 0$ (depending only on $\varepsilon$) such that if $\delta \geq \delta_0$ and $\sigma_1 \geq \max\{n,\sqrt{n}\delta\}$, then, with probability at least $1-\varepsilon$,
\begin{equation*} \sin \angle (v_1, v_1') \leq C \frac{\sqrt{r}} {\delta}  . 
\end{equation*}  
 \end{theorem}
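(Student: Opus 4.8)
The plan is to first show that the top right singular vector $v_1'$ of $A+E$ almost lies in the $r$-dimensional row space of $A$, and then to analyze $v_1'$ inside that space, where the ``effective noise'' will have norm of order $\sqrt r$ rather than $\sqrt n$. Write $\Pi_R$ and $\Pi_C$ for the orthogonal projections onto $\mathrm{Span}\{v_1,\dots,v_r\}$ (the row space of $A$) and onto $\mathrm{Span}\{u_1,\dots,u_r\}$ (the column space, the $u_i$ being the left singular vectors); both have rank $r$, $A=\Pi_C A=A\Pi_R$, and $A^\T=A^\T\Pi_C$. Set $\sigma_1'=\|A+E\|$, fix a unit top right singular vector $v_1'$ of $A+E$, and let $u_1'=(\sigma_1')^{-1}(A+E)v_1'$, so that also $(A+E)^\T u_1'=\sigma_1' v_1'$. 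I will use three facts, each on an event of probability $\ge 1-\varepsilon/3$ for a suitable $C_1=C_1(\varepsilon)$: the standard bound $\|E\|\le C_1\sqrt n$; the scalar bound $|u_1^\T E v_1|\le C_1$ (Chebyshev, since this variable has mean $0$ and variance $1$); and — the key ``real dimension'' input — $\|\Pi_C E\,\Pi_R\|\le C_1\sqrt r$. The last of these I would prove by a net argument: for fixed unit $x\in\mathrm{Row}(A)$, $y\in\mathrm{Col}(A)$ the scalar $y^\T E x=\sum_{k,l}y_k x_l E_{kl}$ is sub-Gaussian with variance proxy $O(1)$ because $\sum_{k,l}y_k^2 x_l^2=1$, so an $\varepsilon$-net of the product of the two $r$-dimensional unit spheres (of cardinality $e^{O(r)}$) together with a union bound gives $\|\Pi_C E\,\Pi_R\|=O(\sqrt{r+\log(1/\varepsilon)})=O_\varepsilon(\sqrt r)$; note that this argument uses no rotation invariance, so Bernoulli entries cause no difficulty. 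From $\sigma_1'\ge\|(A+E)v_1\|\ge\sigma_1-|u_1^\T E v_1|$ and $\sigma_1'\le\sigma_1+\|E\|$, together with $\sigma_1\ge\max\{n,\sqrt n\,\delta\}$ and $\delta\ge\delta_0$ (I take $\delta_0$, and $n$, large in terms of $\varepsilon$), one gets $\tfrac12\sigma_1\le\sigma_1'\le 2\sigma_1$ and $\sigma_1'>\sigma_2$.

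For the localization step, write $w:=(I-\Pi_R)v_1'$ and $w':=(I-\Pi_C)u_1'$. Since $v_1'=(\sigma_1')^{-1}(A^\T+E^\T)u_1'$ with $A^\T u_1'\in\mathrm{Row}(A)$, we have $w=(\sigma_1')^{-1}(I-\Pi_R)E^\T u_1'$, hence $\|w\|\le\|E\|/\sigma_1'\le 2C_1\sqrt n/\sigma_1\le 2C_1/\delta$, the last inequality by $\sigma_1\ge\sqrt n\,\delta$; symmetrically $\|w'\|\le\|E\|/\sigma_1'\le 2C_1\sqrt n/\sigma_1$.

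For the in-subspace estimate, substituting $A^\T u_1'=(\sigma_1')^{-1}(A^\T A v_1'+A^\T E v_1')$ into $v_1'=(\sigma_1')^{-1}A^\T u_1'+(\sigma_1')^{-1}E^\T u_1'$ yields the identity $\big(I-(\sigma_1')^{-2}A^\T A\big)v_1'=(\sigma_1')^{-2}A^\T E v_1'+(\sigma_1')^{-1}E^\T u_1'=:R$. Because $A^\T A=\sum_{i\le r}\sigma_i^2 v_i v_i^\T$, projecting this identity onto $\mathrm{Span}\{v_2,\dots,v_r\}$ gives $s\cdot\min_{2\le i\le r}\big(1-\sigma_i^2/(\sigma_1')^2\big)\le\|\Pi_R R\|$, where $s:=\|\Pi_{2:r}v_1'\|$ and $\Pi_{2:r}$ is the projection onto $\mathrm{Span}\{v_2,\dots,v_r\}$; and $\sigma_1'\ge\sigma_1-C_1$, $\sigma_1'\le 2\sigma_1$, $\delta\ge\delta_0$ give $\min_{2\le i\le r}(1-\sigma_i^2/(\sigma_1')^2)\ge\delta/(16\sigma_1)$. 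To bound $\|\Pi_R R\|$ I would insert $\Pi_C$ via $A^\T=A^\T\Pi_C$ and $\Pi_R E^\T\Pi_C=(\Pi_C E\Pi_R)^\T$, writing $A^\T E v_1'=A^\T\Pi_C E\Pi_R v_1'+A^\T E w$ and $\Pi_R E^\T u_1'=(\Pi_C E\Pi_R)^\T u_1'+\Pi_R E^\T w'$: the ``diagonal'' pieces are controlled by $\|\Pi_C E\Pi_R\|\le C_1\sqrt r$ and $\|A^\T\|=\sigma_1$, while the remaining pieces are at most $\|A^\T\|\,\|E\|\,\|w\|\le 2\|E\|^2$ and $\|E\|\,\|w'\|\le 2\|E\|^2/\sigma_1$. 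Collecting the powers of $\sigma_1'\asymp\sigma_1$ gives $\|\Pi_R R\|\le C\sqrt r/\sigma_1+C\|E\|^2/\sigma_1^2$, so $s\le(16\sigma_1/\delta)\|\Pi_R R\|\le C\sqrt r/\delta+C\|E\|^2/(\sigma_1\delta)$; since $\|E\|^2\le C_1^2 n$ and $\sigma_1\ge n$, the last term is $O(1/\delta)$, whence $s\le C\sqrt r/\delta$ (throughout, $C$ denotes a constant depending only on $\varepsilon$, allowed to change from line to line). Finally $\sin\angle(v_1,v_1')=\|(I-v_1v_1^\T)v_1'\|\le s+\|w\|\le C\sqrt r/\delta+2C_1/\delta\le C_0\sqrt r/\delta$ (using $\sqrt r\ge 1$), on the intersection of the three events, which has probability $\ge 1-\varepsilon$.

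I expect the main obstacle to be the estimate of $\|\Pi_R R\|$ together with the accounting of constants: the natural error terms all carry a factor $\|E\|\asymp\sqrt n$ — the price of the unavoidable wild action of $E$ transverse to the row space — and one must verify that every such factor is absorbed by the two size hypotheses $\sigma_1\ge n$ and $\sigma_1\ge\sqrt n\,\delta$; in particular it is important to keep $\|w\|$ in the form $\|E\|/\sigma_1'$ (so that the $\sigma_1$ in its denominator can cancel a factor $\|A^\T\|=\sigma_1$ appearing in a numerator) rather than simplifying it to $O(1/\delta)$ prematurely. The other genuinely non-routine point, and the one responsible for the improvement over Theorem \ref{wedin}, is the bound $\|\Pi_C E\Pi_R\|=O_\varepsilon(\sqrt r)$, which is the precise incarnation of the heuristic that the ``real dimension'' of the problem is $r$, not $n$.
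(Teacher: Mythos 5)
Your proof is correct and follows essentially the same route as the paper: the decomposition of $v_1'$ into its component outside the row space of $A$ (bounded deterministically by $\|E\|/\sigma_1'$), its component along $v_2,\dots,v_r$ (bounded by a resolvent/gap argument), and the key ``real dimension'' input $\|\Pi_C E\,\Pi_R\|=O_\varepsilon(\sqrt r)$ obtained from a net over the $r$-dimensional singular subspaces, which is exactly Lemma \ref{lemma:r-norm} with $\gamma=2$. The only cosmetic difference is that the paper first symmetrizes to the block matrix $\tilde A$ of \eqref{eq:def:tildeA} and uses the eigenvalue equation with the linear gap $\lambda_1-\sigma_2\ge\delta/2$, whereas you manipulate the SVD relations directly and use the quadratic gap $1-\sigma_i^2/(\sigma_1')^2\gtrsim\delta/\sigma_1$, the extra $\sigma_1$ in the denominator being cancelled by the factor $\|A^\T\|=\sigma_1$ in your residual --- precisely the bookkeeping you flag as the delicate point.
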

 
 

\begin{theorem} \label{thm:probweyl0}
 Let $E$ be an $n \times n$  Bernoulli random matrix, and let  $A$ be an $n \times n$ matrix with rank $r$ satisfying  $ \sigma_1 \geq n$. For every $\eps > 0$, there exists a constant $C_0>0$ (depending only on $\varepsilon$) 
such that, with probability at least $1-\varepsilon$, 
\begin{equation*} \label{eq:probweylbnd0}
\sigma_1  -  C  \leq \sigma_1'  \leq  \sigma_1  +  C  \sqrt{r}.
\end{equation*}
\end{theorem}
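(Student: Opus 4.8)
The plan is to prove the two inequalities separately, controlling everything on a single event of probability at least $1-\varepsilon$ obtained by intersecting three events of probability at least $1-\varepsilon/3$ each. The lower bound does not use that $A$ has low rank. Let $x$ and $y$ be unit left- and right-singular vectors of $A$ for $\sigma_1$, so that $x^\T A y = \sigma_1$. Since $\sigma_1' = \|A+E\| \ge x^\T(A+E)y = \sigma_1 + x^\T E y$, it suffices to bound $x^\T E y = \sum_{i,j}x_i y_j E_{ij}$ from below. This is a linear form in the independent entries of $E$ with $\E[x^\T E y]=0$ and $\V[x^\T E y] = \sum_{i,j} x_i^2 y_j^2 = 1$, so Chebyshev's inequality gives $\Prob\big(x^\T E y < -\sqrt{3/\varepsilon}\,\big) \le \varepsilon/3$; on the complement, $\sigma_1' \ge \sigma_1 - \sqrt{3/\varepsilon}$.

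For the upper bound, let $P$ and $Q$ be the orthogonal projections onto $\mathrm{Col}(A)$ and $\mathrm{Row}(A)$; both have rank $r\ge 1$, and $A = PAQ$. For unit vectors $u,v$ write $\|Pu\| = \cos\phi$, $\|(I-P)u\| = \sin\phi$, $\|Qv\| = \cos\psi$, $\|(I-Q)v\| = \sin\psi$. Replacing $u^\T A v$ by $(Pu)^\T A (Qv)$ and bounding the four pieces of $u^\T E v$ by submultiplicativity of the operator norm yields
\begin{equation*}
u^\T(A+E)v \le \big(\sigma_1 + \|PEQ\|\big)\cos\phi\cos\psi + \|E\|\big(\cos\phi\sin\psi + \sin\phi\cos\psi + \sin\phi\sin\psi\big).
\end{equation*}
On the event $\{\|E\| \le C_2\sqrt n\}$ (an absolute $C_2$ works with probability $1-o(1)\ge 1-\varepsilon/3$ for $n$ large) intersected with $\{\|PEQ\| \le C_1\sqrt r\}$ (treated below), put $s=\sin\phi$, $t=\sin\psi$, and use $\cos\phi\cos\psi \le 1-\tfrac14(s^2+t^2)$ (from $1-\cos\phi\ge\tfrac12\sin^2\phi$ and $\cos\phi\cos\psi\le\tfrac12(\cos\phi+\cos\psi)$) together with $\cos\phi\sin\psi+\sin\phi\cos\psi+\sin\phi\sin\psi \le \tfrac32(s+t)$. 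The right-hand side is then at most
\begin{equation*}
\sigma_1 + C_1\sqrt r + \Big(\tfrac32 C_2\sqrt n\,(s+t) - \tfrac{\sigma_1}{4}(s^2+t^2)\Big),
\end{equation*}
and the parenthetical term is bounded above by the unconstrained maximum of the concave quadratic, namely $\tfrac{9C_2^2 n}{2\sigma_1}$, which is $\le \tfrac92 C_2^2$ because $\sigma_1 \ge n$. Hence $u^\T(A+E)v \le \sigma_1 + C_1\sqrt r + \tfrac92 C_2^2 \le \sigma_1 + C\sqrt r$ using $r\ge 1$, and taking the supremum over $u,v$ gives $\sigma_1' \le \sigma_1 + C\sqrt r$. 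Intersecting with the lower-bound event finishes the proof, with $C$ depending only on $\varepsilon$.

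The one substantive ingredient is the estimate $\Prob\big(\|PEQ\| > C_1\sqrt r\,\big) \le \varepsilon/3$ for a suitable $C_1 = C_1(\varepsilon)$; this is the quantitative form of the ``real dimension is $r$'' heuristic, and it is where the rank, rather than $n$, enters. Since $A$ (hence $P,Q$) is deterministic, write $P = UU^\T$, $Q = WW^\T$ with $U,W$ of size $n\times r$ and orthonormal columns, so that $\|PEQ\| = \|U^\T E W\|$, the norm of an $r\times r$ matrix. For fixed unit $x,y\in\R^r$, the bilinear form $(Ux)^\T E(Wy) = \sum_{i,j}(Ux)_i(Wy)_j E_{ij}$ is a linear form in the independent entries of $E$ with coefficient-squares summing to $\|Ux\|^2\|Wy\|^2 = 1$, so Hoeffding's inequality gives the sub-Gaussian tail $\Prob\big(|(Ux)^\T E(Wy)| > s\big) \le 2e^{-s^2/2}$. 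Taking a $\tfrac14$-net of $S^{r-1}$ in each variable (of size at most $9^r$) and using the standard reduction of an operator norm to its values on a net, a union bound gives $\|U^\T E W\| \le C_1\sqrt r$ with probability at least $1 - 2\cdot 9^{2r}e^{-cC_1^2 r}$; since $C_1$ has to absorb the factor $9^{2r}$, it must depend on $\varepsilon$, but once $C_1=C_1(\varepsilon)$ is large enough this is at least $1-\varepsilon/3$ for every $r\ge 1$. This projected-norm lemma is the main obstacle; everything else is elementary bookkeeping, made possible precisely by the large gap between $\sigma_1\ge n$ and $\|E\| = O(\sqrt n)$, which is what turns the ``transversal'' contribution $\|E\|^2/\sigma_1$ into a constant.
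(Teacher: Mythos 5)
Your proof is correct, and while it rests on the same key probabilistic ingredient as the paper, the route to the upper bound is genuinely different. The shared core is the estimate $\|P E Q\| = O(\sqrt{r})$ for the compression of $E$ to the $r$-dimensional row and column spaces of $A$, proved by Hoeffding plus a net on the $r$-dimensional sphere; this is exactly the paper's Lemma \ref{lemma:r-norm} (stated there for the symmetrized matrix $\tilde{E}$ and the $2r$-dimensional sphere), and it is where the rank enters. The lower bound via a single test vector is also the same in both arguments. Where you diverge is the upper bound: the paper deduces Theorem \ref{thm:probweyl0} from the general Theorem \ref{thm:probweyl}, whose proof symmetrizes to $\tilde{A}+\tilde{E}$, applies the Courant minimax principle with the span of the top \emph{perturbed} eigenvectors as test subspace, and controls the transversal component of those eigenvectors by the deterministic bound $\|P v_i\| \leq \|E\|/\lambda_j$; this is why the paper's general upper bound \eqref{eq:probweylbndupper} contains $1/\sigma_j'$ and must be combined with the lower bound to be closed. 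Your argument instead bounds $u^{\mathrm{T}}(A+E)v$ uniformly over all pairs of unit vectors, splitting each into its components along and transverse to the row/column spaces of $A$, and disposes of the transversal contribution by explicitly maximizing the concave quadratic $\tfrac32 C_2\sqrt{n}\,(s+t)-\tfrac{\sigma_1}{4}(s^2+t^2)$, which is $O(n/\sigma_1)=O(1)$. This buys you a self-contained bound with no reference to $\sigma_1'$ and no symmetrization, at the cost of being specific to $j=1$ (the variational argument over all $(u,v)$ does not immediately generalize to $\sigma_j'$, which is what the paper's Courant--Fischer formulation handles). Two cosmetic points: the constant $C_2$ in the event $\{\|E\|\le C_2\sqrt{n}\}$ should be taken to depend on $\varepsilon$ (or one should restrict to $n$ large) so that the failure probability is at most $\varepsilon/3$ for every $n$, and your Chebyshev step could equally well use the sub-Gaussian tail you already invoke for the net, but neither affects correctness.
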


In particular, when the rank $r$ is significantly smaller than $n$, the bounds in Theorems \ref{theorem:main1} and \ref{thm:probweyl0} are significantly better than those appearing in Corollary \ref{wedin-cor}.  
The intuition behind Theorems \ref{theorem:main1} and \ref{thm:probweyl0} comes from the following heuristic of the second author.



\begin{quote}
If $A$ has rank $r$, all actions of $A$ focus on an $r$ dimensional subspace; intuitively then, $E$ must act like an  $r$ dimensional random matrix rather than an $n$ dimensional one. 
\end{quote}

  This means that  the {\it real dimension} of the problem  is $r$, not $n$. 
While it is clear that one cannot automatically  ignore the (rather wild) action of $E$ outside the range of $A$, this intuition, if true, explains the appearance of the $\sqrt{r}$ factor in the bounds of Theorems \ref{theorem:main1} and \ref{thm:probweyl0} instead of the $\sqrt{n}$ factor appearing in Corollary \ref{wedin-cor}.  


%
%
%
%
%

While Theorems \ref{theorem:main1} and \ref{thm:probweyl0} are stated only for Bernoulli random matrices $E$, our main results actually hold under very mild assumptions on $A$ and $E$.  As a matter of fact, in the strongest results, we will not even need the entries of $E$ to be independent.

 \subsection{Preliminaries: Models of random noise} 
We now state the assumptions we require for the random matrix $E$.  While there  are many models of random matrices, we can capture almost all natural  models   by focusing on a common  property.

\begin{definition} \label{def:concentration}
We say the  $m \times n$  random matrix $E$ is $(C_1,c_1, \gamma)$-concentrated if for all unit vectors $u \in \mathbb{R}^m, v \in \mathbb{R}^n$, and every $t>0$, 
\begin{equation} \label{eq:concentration}
	\Prob( |u^T E v| > t ) \leq C_1 \exp(-c_1 t^\gamma).
\end{equation}	
\end{definition}

The key parameter is $\gamma$. It is easy to verify the following fact, which asserts that the concentration property is closed under addition. 
\begin{fact}  \label{fact1}  If $E_1$ is $(C_1,c_1, \gamma)$-concentrated and $E_2$ is $(C_2,c_2, \gamma)$-concentrated, then 
$E_3 =E_1+E_2$ is $(C_3, c_3, \gamma)$-concentrated for some $C_3, c_3$ depending on $C_1,c_1, C_2, c_2$. \end{fact}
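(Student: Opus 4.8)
The plan is to combine the triangle inequality with a union bound, which is all that the additivity of the concentration property really requires. Fix unit vectors $u \in \R^m$, $v \in \R^n$ and a real number $t > 0$. Since $u^{\T} E_3 v = u^{\T} E_1 v + u^{\T} E_2 v$, if $|u^{\T} E_3 v| > t$ then at least one of $|u^{\T} E_1 v|$, $|u^{\T} E_2 v|$ exceeds $t/2$. Hence
\begin{equation*}
\Prob\bigl( |u^{\T} E_3 v| > t \bigr) \leq \Prob\bigl( |u^{\T} E_1 v| > t/2 \bigr) + \Prob\bigl( |u^{\T} E_2 v| > t/2 \bigr) \leq C_1 \exp\bigl(-c_1 (t/2)^\gamma\bigr) + C_2 \exp\bigl(-c_2 (t/2)^\gamma\bigr),
\end{equation*}
where the last step applies Definition \ref{def:concentration} to $E_1$ and to $E_2$ in turn.

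The second step is to consolidate the two exponential terms. Using the homogeneity $(t/2)^\gamma = 2^{-\gamma} t^\gamma$ and setting $c_3 := 2^{-\gamma} \min\{c_1, c_2\}$, each summand above is at most $\max\{C_1, C_2\} \exp(-c_3 t^\gamma)$, so the whole bound is at most $C_3 \exp(-c_3 t^\gamma)$ with $C_3 := 2 \max\{C_1, C_2\}$. Since $u$, $v$, and $t$ were arbitrary, this shows $E_3$ is $(C_3, c_3, \gamma)$-concentrated, and $C_3, c_3$ depend only on $C_1, c_1, C_2, c_2$ (and $\gamma$, which is fixed), as claimed.

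There is no real obstacle here; the only point worth flagging is conceptual rather than technical. Splitting $t$ as $t/2 + t/2$ degrades the rate only by the fixed multiplicative factor $2^{-\gamma}$ precisely because $r \mapsto r^\gamma$ is homogeneous, so the exponent $\gamma$ — the "key parameter" — is exactly preserved and only the nuisance constants $C_3, c_3$ change. (If one wished, one could split $t = \theta t + (1-\theta) t$ for any fixed $\theta \in (0,1)$ and optimize over $\theta$, but this buys nothing for the statement as phrased.) An obvious induction then extends the fact to any finite sum $E_1 + \cdots + E_k$ of $(C_i, c_i, \gamma)$-concentrated matrices.
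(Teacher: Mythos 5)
Your proof is correct, and it is exactly the argument the paper intends: the paper states Fact \ref{fact1} without proof (calling it "easy to verify"), and the triangle-inequality-plus-union-bound splitting of $t$ into $t/2+t/2$ is the standard verification. Your remark that $C_3,c_3$ also depend on $\gamma$ (through the factor $2^{-\gamma}$) is a fair, minor sharpening of the statement as phrased.
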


Furthermore, the concentration property guarantees a bound on $\| E \|$.  A standard net argument (see Lemma \ref{lemma:net}) shows

\begin{fact}  \label{fact2}  If $E$ is $(C_1,c_1, \gamma)$-concentrated  then there are constants $C',  c' >0$ such that $\Prob (\| E \| \ge C' n^{1/\gamma} )  \le C_1 \exp (-c'n) $. \end{fact}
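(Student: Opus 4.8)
The plan is to prove Fact~\ref{fact2} by a standard $\varepsilon$-net argument combined with the concentration hypothesis~\eqref{eq:concentration} and a union bound. First I would recall that for any $\theta \in (0,1/2)$ there exist $\theta$-nets $\mathcal{N}_m \subseteq S^{m-1}$ and $\mathcal{N}_n \subseteq S^{n-1}$ of the unit spheres with $|\mathcal{N}_m| \leq (1+2/\theta)^m$ and $|\mathcal{N}_n| \leq (1+2/\theta)^n$; taking $\theta = 1/4$ (say) gives nets of size at most $9^m$ and $9^n$. The key deterministic fact (this is what Lemma~\ref{lemma:net} referenced in the text presumably supplies) is that for any matrix $M$,
\[
\|M\| \leq \frac{1}{1-2\theta} \max_{u \in \mathcal{N}_m,\, v \in \mathcal{N}_n} |u^\T M v|,
\]
so with $\theta = 1/4$ we get $\|M\| \leq 2 \max_{u \in \mathcal{N}_m, v \in \mathcal{N}_n} |u^\T M v|$.

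Next I would apply this to $M = E$. For a threshold of the form $t = C' n^{1/\gamma}/2$ (to be chosen), the concentration bound~\eqref{eq:concentration} gives $\Prob(|u^\T E v| > t) \leq C_1 \exp(-c_1 t^\gamma)$ for each fixed pair $(u,v)$. A union bound over the at most $9^m \cdot 9^n \leq 9^{2\max\{m,n\}}$ pairs in $\mathcal{N}_m \times \mathcal{N}_n$ yields
\[
\Prob\Bigl( \max_{u \in \mathcal{N}_m,\, v \in \mathcal{N}_n} |u^\T E v| > t \Bigr) \leq 9^{2\max\{m,n\}} \, C_1 \exp(-c_1 t^\gamma).
\]
With $t = C' n^{1/\gamma}/2$ this is $C_1 \exp\bigl( 2\max\{m,n\}\log 9 - c_1 (C')^\gamma n / 2^\gamma \bigr)$. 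Here there is a minor subtlety worth flagging: the statement as written involves only the parameter $n$, so implicitly either $E$ is square ($m=n$) or one intends $n$ to denote $\max\{m,n\}$ (or the two dimensions are comparable); I would adopt the reading that the relevant size is $n \asymp \max\{m,n\}$, so that $2\max\{m,n\}\log 9 \leq c_1 (C')^\gamma n/2^{\gamma+1}$ once $C'$ is chosen large enough depending on $c_1, \gamma$. Then the exponent is at most $-c_1 (C')^\gamma n / 2^{\gamma+1} =: -c' n$, and combining with the net inequality $\|E\| \leq 2t = C' n^{1/\gamma}$ gives the claim with this $c'$ and with $C_1$ unchanged.

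The main obstacle is not really an obstacle but rather a bookkeeping point: one must make sure the constants line up in the right order of quantification --- $C'$ is chosen large (depending on $c_1$ and $\gamma$, and on the net constant $\log 9$) so that the combinatorial factor from the union bound is absorbed by the exponential decay, and only then is $c'$ read off. A secondary point is handling the case where $n^{1/\gamma}$ is not large enough for the exponential to beat the net cardinality in the low-probability regime; but since the bound $\Prob(\|E\| \geq C' n^{1/\gamma}) \leq C_1 \exp(-c' n)$ is vacuous (the right side exceeds $1$) for small $n$ after possibly enlarging $C_1$, or one simply notes it holds for $n$ large and adjusts $C_1$, this causes no real difficulty. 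I would also remark that one could alternatively replace $t^\gamma$ decay arguments with a direct appeal to $\gamma$-dependent moment bounds, but the net argument is the cleanest and is presumably what Lemma~\ref{lemma:net} is set up to deliver.
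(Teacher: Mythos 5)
Your proposal is correct and is exactly the ``standard net argument'' the paper invokes for Fact~\ref{fact2} (the paper gives no further detail, pointing only to Lemma~\ref{lemma:net}): a $1/4$-net of each sphere, the deterministic bound $\|E\| \leq 2\max_{u,v}|u^\mathrm{T}Ev|$ over net pairs, a union bound against the concentration inequality, and a choice of $C'$ large enough to absorb the $9^{m+n}$ cardinality factor. Your remark about $m$ versus $n$ is a fair observation; the paper implicitly treats the square (or comparable-dimension) case.
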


For readers not familiar with random matrix theory, let us point out why the concentration property is expected to hold for many natural models. If $E$ is random and $v$ is fixed, then the vector $Ev$ must look random. It is well known that in a high dimensional space, a random isotropic vector, with very high probability, is nearly orthogonal to any fixed vector. Thus, one expects that very likely, the inner product of $u$ and $E v$ is small. Definition \ref{def:concentration} is a way to express this observation quantitatively. 

It turns out that all random matrices with independent entries satisfying a mild condition have the concentration property.  Indeed, if $E_{ij}$ denotes the $(i,j)$-entry of $E$ and the entries of $E$ are assumed to be independent, then the bilinear form
$$ u^\mathrm{T} E v = \sum_{i=1}^m \sum_{j=1}^n u_i E_{ij} v_j $$
is just a sum of independent random variables.  If, in addition, the entries of $E$ have mean zero, then, by linearity, $u^\mathrm{T} E v$ also has mean zero.  Hence, \eqref{eq:concentration} can be viewed as a concentration inequality, which expresses how the sum of independent random variables deviates from its mean.  With this interpretation in mind, many models of random matrices can be shown to satisfy \eqref{eq:concentration}.  
 In particular, Lemma \ref{lemma:bernoulli} shows that if $E$ is a $n \times n$ Bernoulli random 
 matrix, then $E$ is $\left(2, \frac{1}{2}, 2 \right)$-concentrated, and $\|E\| \leq 3 \sqrt{n}$ with high probability \cite{V,Vnorm}. 
 However, a convenient feature of  the definition is that   independence  between the entries is not a requirement.
  For instance, it is easy to show that a random orthogonal matrix 
 satisfies the concentration property.  We continue the discussion of the $(C_1,c_1,\gamma)$-concentration property (Definition \ref{def:concentration}) in Section \ref{sec:concentration}.

\section{Main results}

We now state our main results.  We begin with an extension of   Theorem \ref{theorem:main1}.

\begin{theorem} \label{thm:main}
Assume that $E$ is $(C_1, c_1, \gamma)$-concentrated for a trio of constants $C_1, c_1, \gamma >0$, and suppose $A$ has rank $r$.  Then, for any $t>0$, 
\begin{equation} \label{eq:bnd:main}
	\sin \angle(v_1,v_1') \leq  4 \sqrt{2} \left( \frac{t r^{1/\gamma}} {\delta}  +     \frac{ \|E\|}{\sigma_1} +  \frac{ \|E\|^2}{\sigma_1 \delta} \right) 
\end{equation}
with probability at least 
\begin{equation} \label{eq:bnd:main:prob}
	1 - 54 C_1 \exp\left(-c_1\frac{\delta^\gamma}{8^\gamma} \right) - 2C_1 9^{2r} \exp \left( -c_1 r \frac{t^\gamma}{4^{\gamma}} \right). 
\end{equation}
\end{theorem}

\begin{remark} \label{remark:boundonE}
Using  Fact \ref{fact2}, one can replace $\| E\|$ on the right-hand side of \eqref{eq:bnd:main} by $C' n^{1/\gamma }$, which yields that  
$$ \sin \angle(v_1,v_1') \leq 4 \sqrt{2}  \left( \frac{t r^{1/\gamma}} {\delta} +   \frac{ C' n^{1/\gamma} }{\sigma_1} +  \frac{ C'^2 n^{2/\gamma} }{\sigma_1 \delta} \right)$$
with probability at least 
$$ 1 - 54 C_1 \exp\left(-c_1\frac{\delta^\gamma}{8^\gamma} \right) - 2C_1 9^{2r} \exp \left( -c_1 r \frac{t^\gamma}{4^{\gamma}} \right) - C_1 \exp( -c' n). $$  
However, we prefer to state our theorems in the form of Theorem \ref{thm:main}, as the bound $C' n^{1/\gamma}$, in many cases, may not be optimal. 
\end{remark} 

Because Theorem \ref{thm:main} is stated in such generality, the bounds can be difficult to interpret.  For example, it is not completely obvious when the probability in \eqref{eq:bnd:main:prob} is close to one.  Roughly speaking, the two error terms in the probability bound are controlled by the gap $\delta$ and the parameter $t$ (which can be taken to be any positive value).  Specifically, the first term
\begin{equation} \label{eq:term:error1}
	54 C_1 \exp\left(-c_1\frac{\delta^\gamma}{8^\gamma} \right) 
\end{equation}
goes to zero as $\delta$ gets larger, and the second term 
\begin{equation} \label{eq:term:error2}
	2C_1 9^{2r} \exp \left( -c_1 r \frac{t^\gamma}{4^{\gamma}} \right) 
\end{equation}
goes to zero as $t$ tends to infinity.  As a consequence, we obtain the following immediate corollary of Theorem \ref{thm:main} (and Lemma \ref{lemma:conc-nonsym}) in the case when the entries of $E$ are independent.  

\begin{corollary} \label{cor:main1}
Assume that $E$ is an $m \times n$ random matrix with independent entries which have mean zero and are bounded almost surely in magnitude by $K$ for some $K > 0$.  Suppose $A$ has rank $r$.  Then for every $\eps > 0$, there exists $C_0, c_0, \delta_0 > 0$ (depending only on $\eps$ and $K$) such that if $\delta \geq \delta_0$, then
\begin{equation} \label{eq:corbnd}
	\sin \angle(v_1,v_1') \leq C_0 \left( \frac{ \sqrt{r} }{\delta} + \frac{ \|E \|}{\sigma_1} + \frac{ \| E \|^2 }{ \sigma_1 \delta } \right) 
\end{equation}
with probability at least $1 - \eps$.  
\end{corollary}

The first term $\frac{\sqrt{r}} {\delta} $  on the right-hand side of \eqref{eq:corbnd} is precisely the conjectured optimal bound coming from the intuition discussed above. The second term $\frac{\|E\|}{ \sigma_1}$ is necessary. 
If $\|E \| \gg \sigma_1$, then  the intensity of the noise is much stronger than the strongest  signal in the data matrix, so $E$ would corrupt  $A$ completely. 
Thus in order to retain crucial information about $A$, it seems  necessary  to assume $\|E\| < \sigma_1$.  We are not absolutely sure about the necessity of the third term 
$ \frac{ \|E\|^2}{\sigma_1 \delta}$, but under the condition $\|E\| \ll \sigma_1 $, this term is superior to the Davis-Kahan-Wedin bound $\frac{\|E\| }{ \delta}$ appearing in Theorem \ref{thm:modified:wedin}.  

While it remains an open question to determine whether the bounds in Theorem \ref{thm:main} are optimal, we do note that in certain situations the bounds are close to optimal.  Indeed, in \cite{BGN}, the eigenvectors of perturbed random matrices are studied, and, under various technical assumptions on the matrices $A$ and $E$, the results in \cite{BGN} give the exact asymptotic behavior of the dot product $|v_1 \cdot v_1'|$.  Rewriting the dot product in terms of cosine (and further expressing the value in terms of sine), we find that the bounds in \eqref{eq:bnd:main} match the exact asymptotic behavior obtained in \cite{BGN}, up to constant factors.  Similar results in \cite{OW} also match the bound in \eqref{eq:bnd:main}, up to constant factors, in the case when $E$ is a Wigner random matrix and $A$ has rank one.  

Corollary \ref{cor:main1} provides a bound which holds with probability at least $1 - \eps$.  As another consequence of Theorem \ref{thm:main}, we obtain the following bound which holds with probability converging to $1$.  

\begin{corollary} \label{cor:main2}
Assume that $E$ is an $m \times n$ random matrix with independent entries which have mean zero and are bounded almost surely in magnitude by $K$ for some $K > 0$.  Suppose $A$ has rank $r$.  Then there exists $C_0 > 0$ (depending only on $K$) such that if $\alpha_n$ is any sequence of positive values converging to infinity and $\delta \geq \alpha_n$, then
\[ \sin \angle(v_1,v_1') \leq C_0 \left( \frac{ \alpha_n \sqrt{r} }{\delta} + \frac{ \|E \|}{\sigma_1} + \frac{ \| E \|^2 }{ \sigma_1 \delta } \right) \]
with probability $1 - o(1)$.  Here, the rate of convergence implicit in the $o(1)$ notation depends on $K$ and $\alpha_n$.  
\end{corollary}

Before continuing, we pause to make one final remark regarding Corollaries \ref{cor:main1} and \ref{cor:main2}.  In stating our main results below, we will always state them in the generality of Theorem \ref{thm:main}.  However, each of the results can be specialized in several different directions similar to what we have done in Corollaries \ref{cor:main1} and \ref{cor:main2}.  In the interest of space, we will not always state all such corollaries.

We are able to  extend Theorem \ref{thm:main} in two different ways. First, we can bound the angle between $v_j$ and $v_j'$ for any index $j$. Second,  and more importantly, we can bound the 
angle between the subspaces spanned by $\{v_1, \dots, v_j \}$ and $\{v_1', \dots, v_j' \}$, respectively. As the projection onto the subspaces spanned by the first few singular vectors  (i.e.,  low rank approximation)  plays an important role in  a vast collection of problems,  this result potentially has a large number of applications.  
 x

We begin by bounding the largest principal angle between
\begin{equation} \label{eq:uspan}
	V := \mathrm{Span}\{v_1, \ldots, v_j\} \quad \text{and}\quad V' := \mathrm{Span}\{v_1', \ldots, v_j'\}
\end{equation}
for some integer $1 \leq j \leq r$, where $r$ is the rank of $A$.  Let us recall that  if $U$ and $V$ are two subspaces of the same dimension, then the (principal) angle between them is defined as
\begin{equation} \label{eq:ssad}
	\sin \angle(U,V) := \max_{u \in U; u \neq 0} \min_{v \in V; v \neq 0} \sin \angle(u,v) = \|P_U - P_V \| = \|P_{U^\perp} P_{V} \|, 
\end{equation}
where $P_W$ denotes the orthogonal projection onto subspace $W$. 

\begin{theorem} \label{thm:subspace}
Assume that $E$ is $(C_1, c_1, \gamma)$-concentrated for a trio of constants $C_1, c_1, \gamma >0$.  Suppose $A$ has rank $r$, and let $1 \leq j \leq r$ be an integer.  Then, for any $t>0$,
\begin{equation} \label{eq:subspace:bnd}
	\sin \angle(V,V') \leq 4 \sqrt{2j} \left( \frac{t r^{1/\gamma}}{\delta_j} + \frac{\|E\|^2}{\sigma_j \delta_j} + \frac{ \|E\|}{\sigma_j} \right), 
\end{equation}
with probability at least 
\begin{equation} \label{eq:subspace:prob}
	1 - 6C_1 9^j \exp \left( -c_1 \frac{\delta_j^\gamma}{8^\gamma} \right) - 2C_1 9^{2r} \exp \left( -c_1 r \frac{t^\gamma}{4^\gamma} \right), 
\end{equation}
where $V$ and $V'$ are the $j$-dimensional subspaces defined in \eqref{eq:uspan}.
\end{theorem}

The error terms in \eqref{eq:subspace:prob} (as well as all other probability bounds appearing in our main results) can be controlled in a similar fashion as the error terms \eqref{eq:term:error1} and \eqref{eq:term:error2}.  Indeed, the first error term in \eqref{eq:subspace:prob} is controlled by the gap $\delta_j$ and the second term is controlled by the parameter $t$.  

We believe the factor of $\sqrt{j}$ in \eqref{eq:subspace:bnd} is suboptimal and is simply an artifact of our proof.  However, in many applications $j$ is significantly smaller than the dimension of the matrices, making the contribution from this term negligible. 

For comparison, we present an analogue of Theorem \ref{thm:modified:wedin}, which follows from the Davis-Kahan-Wedin sine theorem \cite[Theorem V.4.4]{SS}, using the same argument as in the proof of Theorem \ref{thm:modified:wedin}.
\begin{theorem}[Modified Davis-Kahan-Wedin sine theorem: singular space] \label{thm:modified:wedin3}
Suppose $A$ has rank $r$, and let $1 \leq j \leq r$ be an integer. Then, for an arbitrary matrix $E$, 
$$ \sin \angle (V, V') \leq 2\frac{\|E\|}{\delta_j}, $$
where $V$ and $V'$ are the $j$-dimensional subspaces defined in \eqref{eq:uspan}.
\end{theorem}

 It remains an open question to give an optimal version of Theorem \ref{thm:subspace} for subspaces corresponding to an arbitrary set of singular values.  However, we can use Theorem \ref{thm:subspace} repeatedly to obtain bounds for the case when one considers a few intervals of singular values.  For instance, by applying Theorem \ref{thm:subspace} twice, we  obtain the following result.  Denote $\delta_0 := \delta_1$.    

\begin{corollary} \label{cor:subspace}
Assume that $E$ is $(C_1, c_1, \gamma)$-concentrated for a trio of constants $C_1, c_1, \gamma >0$.  Suppose $A$ has rank $r$, and let $1 < j \leq l \leq r$ be integers.  Then, for any $t>0$,
\begin{equation} \label{eq:subspace2:bnd}
	\sin \angle(V,V') \leq 8 \sqrt{2l} \left( \frac{t r^{1/\gamma}}{\delta_{j-1}} + \frac{t r^{1/\gamma}}{\delta_l} + \frac{\|E\|^2}{\sigma_{j-1} \delta_{j-1}} + \frac{\|E\|^2}{\sigma_l \delta_l} +\frac{ \|E\|}{\sigma_l} \right), 
\end{equation}
with probability at least 
$$ 1 - 6C_1 9^{j-1} \exp \left( -c_1 \frac{\delta_{j-1}^\gamma}{8^\gamma} \right) - 6C_1  9^l \exp \left( -c_1 \frac{\delta_l^\gamma}{8^\gamma} \right) - 4C_1 9^{2r} \exp \left( -c_1 r \frac{t^\gamma}{4^\gamma} \right), $$ 
where
\begin{equation} \label{def:subspaceinterval}
	V:= \mathrm{Span}\{v_j,\ldots, v_l\} \quad \text{and}\quad V':=\mathrm{Span}\{v_j',\ldots,v_l'\}. 
\end{equation}
\end{corollary}
\begin{proof}
Let
\begin{align*}
	V_1 &:= \mathrm{Span}\{v_1,\ldots,v_l\}, \quad V_1' := \mathrm{Span}\{v_1',\ldots,v_l'\}, \\
	V_2 &:= \mathrm{Span}\{v_1,\ldots,v_{j-1}\}, \quad V_2' := \mathrm{Span}\{v_1',\ldots,v_{j-1}'\}.
\end{align*}
For any subspace $W$, let $P_W$ denote the orthogonal projection onto $W$.  It follows that $P_{W^\perp} = I - P_{W}$, where $I$ denotes the identity matrix.  By definition of the subspaces $V,V'$, we have
$$ P_V = P_{V_1} P_{V_2^\perp} \quad \text{and}\quad P_{V'} = P_{V_1'} P_{V_2'^\perp}. $$
Thus, by \eqref{eq:ssad}, we obtain
\begin{align*}
	\sin \angle(V,V') &= \| P_{V_1} P_{V_2^\perp} - P_{V_1'} P_{V_2'^\perp} \| \\
		&\leq \| P_{V_1} P_{V_2^\perp} - P_{V_1'} P_{V_2^\perp} \| + \| P_{V_1'} P_{V_2^\perp} - P_{V_1'} P_{V_2'^\perp} \| \\
		&\leq \| P_{V_1} - P_{V_1'} \| + \| P_{V_2} - P_{V_2'} \| \\
		&= \sin \angle (V_1,V_1') + \sin \angle(V_2,V_2').
\end{align*}
Theorem \ref{thm:subspace} can now be invoked to bound $\sin\angle(V_1,V_1')$ and $\sin\angle(V_2,V_2')$, and the claim follows.  
\end{proof}

Again, the factor of $\sqrt{l}$ appearing in \eqref{eq:subspace2:bnd} follows from the analogous factor appearing in \eqref{eq:subspace:bnd}.  Indeed, if this factor could be removed from \eqref{eq:subspace:bnd}, then the proof above shows that it would also be removed from \eqref{eq:subspace2:bnd}.  

For comparison, we present the following version of Theorem \ref{thm:modified:wedin}, which follows Theorem \ref{thm:modified:wedin3} and the argument above.  Again denote $\delta_0 :=\delta_1$.
\begin{theorem}[Modified Davis-Kahan-Wedin sine theorem: singular space]  \label{thm:modified:wedin2}
Suppose $A$ has rank $r$, and let $1 \leq j \leq l \leq r$ be integers. Then, for an arbitrary matrix $E$, 
$$ \sin \angle (V, V') \leq 4\frac{\|E\|}{\min\{\delta_{j-1}, \delta_l \}}, $$
where $V$ and $V'$ are defined in \eqref{def:subspaceinterval}.  
\end{theorem}


We now consider the problem of approximating the $j$-th singular vector $v_j$ recursively in terms of the bounds for $\sin \angle(v_i, v_i')$, $i < j$.  

\begin{theorem} \label{thm:general}
Assume that $E$ is $(C_1, c_1, \gamma)$-concentrated for a trio of constants $C_1, c_1, \gamma >0$.  Suppose $A$ has rank $r$, and let $1 \leq j \leq r$ be an integer.  Then, for any $t>0$,
$$ \sin \angle (v_j, v_j') \leq 4 \sqrt{2} \left( \left( \sum_{i=1}^{j-1} \sin^2 \angle (v_i, v_i') \right)^{1/2} + \frac{t r^{1/\gamma}}{\delta_j} + \frac{\|E\|^2}{\sigma_j \delta_j} + \frac{\|E\|}{\sigma_j} \right) $$
with probability at least 
$$ 1 - 6C_1 9^j \exp \left( -c_1 \frac{\delta_j^\gamma}{8^\gamma} \right) - 2C_1 9^{2r} \exp \left( -c_1 r \frac{t^\gamma}{4^\gamma} \right). $$ 
\end{theorem}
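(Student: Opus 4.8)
The plan is to follow the scheme behind Theorem~\ref{thm:main}, which is the case $j=1$ of the present statement (the sum being empty), with one essential new ingredient: the perturbed vector $v_j'$ is almost orthogonal to $v_1,\dots,v_{j-1}$, so the entire analysis can be localized to the $(r-j+1)$-dimensional subspace $W_j:=\mathrm{Span}\{v_j,\dots,v_r\}$, inside which $\sigma_j^2$ is the \emph{leading} eigenvalue of $A^{\T}A$, with gap $\sigma_j^2-\sigma_{j+1}^2\ge\delta_j\sigma_j$ to the rest of the spectrum — so only $\delta_j$, and not $\min\{\delta_{j-1},\delta_j\}$, ends up in the denominator. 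Throughout, take $v_i$ to be the right singular vectors and write $u_i,u_i'$ for the left singular vectors of $A,A+E$, so that $Av_i=\sigma_iu_i$, $A^{\T}u_i=\sigma_iv_i$, and similarly for the primed quantities (the ``left'' version of the theorem is the same statement applied to $A^{\T},E^{\T}$). Two localization facts will be used. Writing $P$ for the orthogonal projection onto $W:=\mathrm{Range}(A^{\T})=\mathrm{Span}\{v_1,\dots,v_r\}$ and $Q=I-P$: from $(A+E)^{\T}u_j'=\sigma_j'v_j'$ and $QA^{\T}u_j'=0$ one gets $\|Qv_j'\|=\|QE^{\T}u_j'\|/\sigma_j'\le\|E\|/\sigma_j'$, and symmetrically $u_j'$ lies within $\|E\|/\sigma_j'$ of $\mathrm{Range}(A)$. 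And since $v_1',\dots,v_j'$ are orthonormal, choosing signs so that $\|v_i-v_i'\|\le\sqrt2\,\sin\angle(v_i,v_i')$, we have $|v_i^{\T}v_j'|=|(v_i-v_i')^{\T}v_j'|\le\sqrt2\,\sin\angle(v_i,v_i')$ for $i<j$, hence $\|P_{<j}v_j'\|\le\sqrt2\,S$, where $P_{<j}$ projects onto $\mathrm{Span}\{v_1,\dots,v_{j-1}\}$ and $S:=\bigl(\sum_{i=1}^{j-1}\sin^2\angle(v_i,v_i')\bigr)^{1/2}$.

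Decompose $v_j'=P_{<j}v_j'+\alpha x+\beta y$ with $x$ a unit vector in $W_j$, $y$ a unit vector in $W^{\perp}$, $\beta=\|Qv_j'\|\le\|E\|/\sigma_j'$, and $\alpha=(1-\|P_{<j}v_j'\|^2-\beta^2)^{1/2}$. Since $P_{<j}v_j'$, $(I-v_jv_j^{\T})x$ and $y$ are mutually orthogonal, expanding $\|(I-v_jv_j^{\T})v_j'\|^2$ gives $\sin\angle(v_j,v_j')\le\|P_{<j}v_j'\|+\sin\angle(v_j,x)+\beta\le\sqrt2\,S+\sin\angle(v_j,x)+\|E\|/\sigma_j'$, so everything reduces to bounding $\sin\angle(v_j,x)$ for the unit vector $x\in W_j$. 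Before that, record the probabilistic input (beyond Weyl's bound, Theorem~\ref{theorem:Weyl}): a net estimate. Over $1/4$-nets of the unit spheres of $\mathrm{Range}(A)$ and $\mathrm{Range}(A^{\T})$ (each of size $\le 9^r$), applying~\eqref{eq:concentration} at level $\tfrac14tr^{1/\gamma}$, union-bounding over the $\le 9^{2r}$ pairs and passing to all unit vectors (Lemma~\ref{lemma:net}) gives
\[
\kappa:=\sup\bigl\{|a^{\T}Eb|:a\in\mathrm{Range}(A),\ b\in\mathrm{Range}(A^{\T}),\ \|a\|=\|b\|=1\bigr\}\le tr^{1/\gamma}
\]
off an event of probability $\le 2C_1 9^{2r}\exp(-c_1 rt^{\gamma}/4^{\gamma})$; a companion estimate at level $\sim\delta_j/8$ for the finitely many quantities built from $v_1,\dots,v_j$ and $u_1,\dots,u_j$ keeps $\sigma_j'$ away from $\sigma_{j\pm1}$ (so that ${\sigma_j'}^2-\sigma_{j+1}^2\ge\tfrac12(\sigma_j^2-\sigma_{j+1}^2)$), off an event of probability $\le 6C_1 9^{j}\exp(-c_1\delta_j^{\gamma}/8^{\gamma})$; these two events account for the stated failure probability. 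One may also assume, at no cost, that $\|E\|$ is small relative to $\sigma_j$ and $\delta_j$ and that $tr^{1/\gamma}\le\delta_j$, since otherwise the right-hand side already exceeds $1$.

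For $\sin\angle(v_j,x)$, apply $P_{W_j}$ to the eigen-equation $(A+E)^{\T}(A+E)v_j'={\sigma_j'}^2 v_j'$. Because $A^{\T}A$ leaves $W_j$ and $\mathrm{Span}\{v_1,\dots,v_{j-1}\}$ invariant and annihilates $W^{\perp}$, the quadratic term collapses to $P_{W_j}A^{\T}Av_j'=\alpha\,(A^{\T}A|_{W_j})\,x$, while the three cross-terms combine, via $(A+E)v_j'=\sigma_j'u_j'$, to $P_{W_j}A^{\T}Ev_j'+\sigma_j'P_{W_j}E^{\T}u_j'$; hence $\|(A^{\T}A|_{W_j})x-{\sigma_j'}^2 x\|=\alpha^{-1}\|P_{W_j}A^{\T}Ev_j'+\sigma_j'P_{W_j}E^{\T}u_j'\|$. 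Now $\|P_{W_j}A^{\T}Ev_j'\|\le\sigma_j\,\|U_{[j:r]}^{\T}Ev_j'\|$ with $U_{[j:r]}=[u_j,\dots,u_r]$; splitting $v_j'=P_{<j}v_j'+\alpha x+\beta y$, the $x$- and $P_{<j}v_j'$-parts lie in the $r$-dimensional subspaces the net covers (of norm $\le1$ and $\le\sqrt2 S$ respectively), so contribute $\lesssim\kappa(1+S)$, while the $\beta y$-part costs only $\beta\|E\|\le\|E\|^2/\sigma_j'$; similarly, peeling the $\mathrm{Range}(A)$-component off the unit vector $u_j'$ gives $\|P_{W_j}E^{\T}u_j'\|\lesssim\kappa+\|E\|^2/\sigma_j'$. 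Altogether $\|(A^{\T}A|_{W_j})x-{\sigma_j'}^2 x\|\lesssim\sigma_j\kappa(1+S)+\|E\|^2$. Inside $W_j$ the matrix $A^{\T}A$ has $v_j$ as top eigenvector with second eigenvalue $\sigma_{j+1}^2$, so the elementary single-eigenvalue sin-theta bound yields
\begin{align*}
\sin\angle(v_j,x)&\le\frac{\|(A^{\T}A|_{W_j})x-{\sigma_j'}^2 x\|}{{\sigma_j'}^2-\sigma_{j+1}^2}\ \lesssim\ \frac{\sigma_j\kappa(1+S)+\|E\|^2}{\delta_j\sigma_j}\\
&\lesssim\ \frac{tr^{1/\gamma}}{\delta_j}+\frac{\|E\|^2}{\sigma_j\delta_j}+\frac{S\,tr^{1/\gamma}}{\delta_j},
\end{align*}
the last term being $\lesssim S$ in the assumed regime. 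Feeding this into the inequality for $\sin\angle(v_j,v_j')$ and tracking the constants should produce the asserted bound with factor $4\sqrt2$.

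The main obstacle is the step just described, in two respects. First, the \emph{localization}: it is precisely the near-orthogonality of $v_j'$ to $v_1,\dots,v_{j-1}$ that licenses replacing $A^{\T}A$ by $A^{\T}A|_{W_j}$, in which $\sigma_j^2$ is the \emph{leading} eigenvalue; this is what makes the single (possibly small) gap $\delta_j$ appear in the denominator rather than $\min\{\delta_{j-1},\delta_j\}$, and is also why the $S$-term enters \emph{additively} rather than sitting inside a gap. Second, the \emph{scale control}: a crude bound on the cross-terms would multiply $\kappa$ by $\|Av_j'\|\sim\sigma_1$ or produce spurious errors of size $\|E\|^2$ in the wrong place; the remedy is to recombine the three cross-terms through $(A+E)v_j'=\sigma_j'u_j'$ so that the \emph{unit} vector $u_j'$, not $Av_j'$, is what the net estimate acts on, and to keep the small component $P_{<j}v_j'$ as a vector in $W$ (never hit by $A$), tracing every $\|E\|^2$-size error back to a factor $\beta\le\|E\|/\sigma_j'$. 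Getting the numerical constant down to exactly $4\sqrt2$ is then a matter of minor care in these estimates; the remaining ingredients — Weyl's bound, the net lemma~\ref{lemma:net}, the sin-theta estimate inside $W_j$, and the reductions to the non-trivial regime — are routine.
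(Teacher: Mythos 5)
Your proposal is correct in outline and follows the same overall strategy as the paper: a three-way decomposition of $v_j'$, with the $\mathrm{Span}\{v_1,\dots,v_{j-1}\}$ component controlled via $|v_i^{\T}v_j'|=|(v_i-v_i')^{\T}v_j'|\le\sqrt2\,\sin\angle(v_i,v_i')$ (exactly how the paper treats the coefficients $\alpha_k$, $k<j$, in its expansion \eqref{eq:vjproj}), the $\ker A$ component bounded by $\|E\|/\sigma_j'$ (the paper's Lemma \ref{lemma:proj_bound}), and the remaining component bounded by a net estimate divided by the gap $\delta_j$. Where you genuinely diverge is in the implementation of this last step. The paper symmetrizes: passing to the Jordan--Wielandt matrices $\tilde A$, $\tilde A+\tilde E$ makes the perturbation identity \emph{linear} --- multiplying $\tilde E=(\tilde A+\tilde E)-\tilde A$ by $U_j^{\T}$ and $v_i$ gives $(\lambda_iI-D_j)U_j^{\T}v_i=U_j^{\T}\tilde Ev_i$, and one inverts the diagonal matrix using $\lambda_i-\sigma_k\ge\delta_j/2$ (Lemma \ref{lemma:uproj}). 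You stay in the original space and work with the quadratic equation $(A+E)^{\T}(A+E)v_j'={\sigma_j'}^2v_j'$ restricted to $W_j$, paying for the cross terms but recovering the right scale by recombining them through $(A+E)v_j'=\sigma_j'u_j'$, then invoking a sin-theta bound for $A^{\T}A|_{W_j}$ with gap ${\sigma_j'}^2-\sigma_{j+1}^2\gtrsim\sigma_j\delta_j$; the probabilistic inputs are the same (your $\kappa$ is the paper's $\|U^{\T}\tilde EU\|$ from Lemma \ref{lemma:r-norm}, your companion estimate is Lemma \ref{lemma:j-largest}). The symmetrization buys a cleaner linear computation at the cost of doubling the dimension; your version avoids the doubling at the cost of cross-term bookkeeping. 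Two details to tighten: the identity for $\|(A^{\T}A|_{W_j})x-{\sigma_j'}^2x\|$ carries a factor $\alpha^{-1}$ that you never bound --- note that in the nontrivial regime (where $S$ and $\|E\|/\sigma_j'$ are at most a small constant, else the asserted bound exceeds $1$) one has $\alpha^2=1-\|P_{<j}v_j'\|^2-\beta^2\ge 1/2$; and the lower bound $\sigma_j'\ge\sigma_j-\delta_j/2$ needs its own short argument (minimize $\|(A+E)v\|^2$ over the unit sphere of $\mathrm{Span}\{v_1,\dots,v_j\}$ and run a $9^j$-point net, as in Lemma \ref{lemma:j-largest}).
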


The bound in Theorem \ref{thm:general} depends inductively on the bounds for $\sin^2 \angle (v_i, v_i')$, $i = 1, \ldots, j-1$, and as such, we do not believe it to be sharp.  The bound does, however, improve upon a similar recursive bound presented in \cite{V}.

Finally, let us present the general form of Theorem \ref{thm:probweyl0} for singular values. Readers can compare the result with the classical bound in Theorem \ref{theorem:Weyl}.

\begin{theorem} \label{thm:probweyl}
Assume that $E$ is $(C_1, c_1, \gamma)$-concentrated for a trio of constants $C_1, c_1, \gamma >0$.  Suppose $A$ has rank $r$, and let $1 \leq j \leq r$ be an integer.  Then, for any $t>0$, 
\begin{equation} \label{eq:probweylbndlower}
	\sigma_j' \geq \sigma_j - t
\end{equation} 
with probability at least 
$$ 1 - 2C_1 9^j \exp \left( -c_1 \frac{t^\gamma}{4^\gamma} \right), $$
and
\begin{equation} \label{eq:probweylbndupper}
	\sigma_j' \leq \sigma_j + t r^{1/\gamma} + 2\sqrt{j} \frac{ \|E\|^2}{\sigma_j'} + j \frac{\|E\|^3}{{\sigma_j'}^2} 
\end{equation}
with probability at least 
$$ 1 - 2C_1 9^{2r} \exp \left( -c_1 r \frac{t^\gamma}{4^\gamma} \right). $$
\end{theorem}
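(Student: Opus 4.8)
The plan is to establish the lower bound \eqref{eq:probweylbndlower} and the upper bound \eqref{eq:probweylbndupper} by rather different means: the former is essentially a one-line min--max argument, while the latter is extracted from a Schur-complement identity after passing to a symmetric dilation.

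For \eqref{eq:probweylbndlower}, let $S:=\mathrm{Span}\{v_1,\dots,v_j\}$ and $T:=\mathrm{Span}\{u_1,\dots,u_j\}$ be the top-$j$ right- and left-singular subspaces of $A$. Since composing with the orthogonal projections $P_S,P_T$ cannot increase a singular value, $\sigma_j'=\sigma_j(A+E)\ge\sigma_j\bigl(P_T(A+E)P_S\bigr)$, and in the bases $(v_i)_{i\le j}$, $(u_i)_{i\le j}$ the operator $P_T(A+E)P_S\colon S\to T$ is the $j\times j$ matrix $\mathrm{diag}(\sigma_1,\dots,\sigma_j)+N$ with $N:=P_TEP_S$. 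Weyl's inequality (Theorem \ref{theorem:Weyl}) applied to these $j\times j$ matrices gives $\sigma_j'\ge\sigma_j-\|N\|$, so it suffices to show $\|N\|\le t$ with the asserted probability. That is a routine $\varepsilon$-net estimate (cf.\ Lemma \ref{lemma:net}): cover the unit spheres of $S$ and of $T$ by nets of cardinality at most $9^{j}$, apply the concentration hypothesis \eqref{eq:concentration} to each pair $(u,v)$ of net points with $t$ replaced by $t/4$ (the factor accounting for the net-to-norm loss), and take a union bound.

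For \eqref{eq:probweylbndupper} I would work with the Jordan--Wielandt dilation $\mathcal B:=\left(\begin{smallmatrix}0&A+E\\(A+E)^{\T}&0\end{smallmatrix}\right)=\mathcal A+\mathcal E$, for which $\sigma_j'=\lambda_j(\mathcal B)$ and $\sigma_j=\lambda_j(\mathcal A)$ (this is where $j\le r$ enters), $\|\mathcal E\|=\|E\|$, and $\mathcal A$ has rank $2r$ with range $\mathrm{col}(A)\oplus\mathrm{row}(A)$. Decomposing $\mathcal B$ along this $2r$-dimensional subspace and its orthogonal complement as $\left(\begin{smallmatrix}D&F\\F^{\T}&H\end{smallmatrix}\right)$, the diagonal block $D=\mathcal A_{11}+\mathcal E_{11}$ is the Jordan--Wielandt dilation of $\mathrm{diag}(\sigma_1,\dots,\sigma_r)+\bar E$ with $\bar E:=P_{\mathrm{col}(A)}EP_{\mathrm{row}(A)}$, so $\lambda_j(D)\le\sigma_j+\|\bar E\|$, while $\|F\|,\|H\|\le\|E\|$. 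One may assume $\sigma_j'>2\|E\|$, for otherwise Weyl's bound $\sigma_j'\le\sigma_j+\|E\|$ together with $\|E\|\le 2\sqrt{j}\,\|E\|^2/\sigma_j'$ already gives the claim. Then, with $\mu:=\sigma_j'>\|H\|$, applying Sylvester's law of inertia to the congruence $sI-\mathcal B\cong\bigl(sI-D-F(sI-H)^{-1}F^{\T}\bigr)\oplus(sI-H)$, valid for $s>\|H\|$, and letting $s\uparrow\mu$ shows that $\mu\le\lambda_j\bigl(D+F(\mu I-H)^{-1}F^{\T}\bigr)\le\lambda_j(D)+\|F\|^2/(\mu-\|H\|)$. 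Combining this with $\lambda_j(D)\le\sigma_j+\|\bar E\|$ and $\|F\|^2/(\mu-\|H\|)\le\|E\|^2/(\sigma_j'-\|E\|)\le 2\|E\|^2/\sigma_j'$, and bounding $\|\bar E\|\le t r^{1/\gamma}$ by an $\varepsilon$-net argument over the $r$-dimensional unit spheres of $\mathrm{col}(A)$ and $\mathrm{row}(A)$ — this time applying \eqref{eq:concentration} with parameter $t r^{1/\gamma}/4$, whose failure probability $C_1\exp(-c_1 r t^{\gamma}/4^{\gamma})$ absorbs the union bound over the $\le 9^{2r}$ pairs of net points — yields \eqref{eq:probweylbndupper}, the cubic term $j\|E\|^3/{\sigma_j'}^2$ being harmless slack.

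The step I expect to be the main obstacle is the inertia/counting argument identifying $\mu=\sigma_j'$ as precisely the $j$-th largest eigenvalue of the nonlinear pencil $D+F(\mu I-H)^{-1}F^{\T}$: one has to ensure that no eigenvalue of $\mathcal B$ enters the interval $(-\|H\|,\|H\|)$ over the range in play, and handle eigenvalue coincidences by a limiting argument. The remaining ingredients — the dilation, Weyl's inequality, and the two net estimates — are routine, though some care is needed to match the numerical constants and the two different scalings ($t$ versus $tr^{1/\gamma}$) appearing in the probability bounds.
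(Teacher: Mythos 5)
Your argument is essentially correct, but both halves take a genuinely different route from the paper. For the lower bound, the paper does not compress $A+E$ between the top-$j$ singular subspaces of $A$; it works with the symmetric dilation and applies Courant--Fischer with the test subspace $\mathrm{Span}\{u_1,\dots,u_j\}$, so that the only random quantity to control is the quadratic form $\min_{v\in S}v^{\mathrm{T}}\tilde{E}v$ over a single $j$-dimensional sphere --- hence a single $1/4$-net of size $9^{j}$ and the stated probability $1-2C_19^{j}\exp(-c_1t^{\gamma}/4^{\gamma})$. Your version needs the full operator norm of the bilinear compression $P_{T}EP_{S}$, which forces a union bound over $9^{j}\times 9^{j}$ pairs of net points and yields $9^{2j}$ in place of $9^{j}$; this proves a slightly weaker probability bound than the one asserted, and to recover the stated constant you would have to pass to the quadratic-form formulation anyway. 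For the upper bound, the paper again uses Courant--Fischer, now with the test subspace $\mathrm{Span}\{v_1,\dots,v_j\}$ of top eigenvectors of $\tilde{A}+\tilde{E}$ plugged into $\tilde{A}$, decomposing each test vector as $v=UU^{\mathrm{T}}v+Pv$ and using the deterministic bound $\|Pv_i\|\le\|E\|/\lambda_j$; expanding $v^{\mathrm{T}}\tilde{E}v$ then produces exactly the three terms $tr^{1/\gamma}+2\sqrt{j}\|E\|^2/\lambda_j+j\|E\|^3/\lambda_j^2$. Your Schur-complement/inertia argument replaces this with the identity $\sigma_j'\le\lambda_j(D)+\|F\|^2/(\sigma_j'-\|H\|)$, which is valid (the congruence holds for every $s>\|H\|$, so the eigenvalue-counting step you worry about is not actually an obstacle, and the limit $s\uparrow\sigma_j'$ is justified by continuity and monotonicity of $s\mapsto\lambda_j(D+F(sI-H)^{-1}F^{\mathrm{T}})$); it in fact gives the cleaner bound $2\|E\|^2/\sigma_j'$ without the $\sqrt{j}$ and $j$ factors, at the cost of the case split at $\sigma_j'\le 2\|E\|$ (which you handle correctly via Weyl). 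Both approaches reduce the randomness in the upper bound to $\|U^{\mathrm{T}}\tilde{E}U\|=\|\bar{E}\|$, controlled by the same $9^{2r}$-net estimate, so the second probability matches. In short: the upper-bound proof is a legitimate and arguably sharper alternative; the lower-bound proof is sound but loses a factor in the exceptional probability ($9^{2j}$ versus $9^{j}$) relative to the statement.
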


\begin{remark}
Notice that the  upper bound for $\sigma_j'$ given in \eqref{eq:probweylbndupper} involves 
$1/\sigma_j'$.  In many situations, the lower bound in \eqref{eq:probweylbndlower} can be used to provide an upper bound for $1/\sigma_j'$.  
\end{remark}

We conjecture that the factors of $\sqrt{j}$ and $j$ appearing in \eqref{eq:probweylbndupper} are not needed and are simply an artifact of our proof.  In applications, $j$ is typically much smaller than the dimension, often making the contribution from these terms negligible.  To illustrate this point, consider the following example when $r = O(1)$.  Let $A$ and $E$ be symmetric matrices, and assume the entries on and above the diagonal of $E$ are independent random variables.  Such a matrix $E$ is known as a Wigner matrix, and the eigenvalues of perturbed Wigner matrices have been well-studied in the random matrix theory literature; see, for instance, \cite{KY, RS} and references therein.  In particular, the results in \cite{KY, RS} give the asymptotic location of the largest $r$ eigenvalues as well as their joint fluctuations.  These exact asymptotic results imply that, in this setting, the bounds appearing in Theorem \ref{thm:probweyl} are sharp, up to constant factors.  

As the bounds in Theorem \ref{thm:probweyl} are fairly general, let us state a corollary in the case when the entries of $E$ are independent random variables.  
\begin{corollary}  \label{cor:weyl}
Assume that $E$ is an $m \times n$ random matrix with independent entries which have mean zero and are bounded almost surely in magnitude by $K$ for some $K > 0$.  Suppose $A$ has rank $r$.  Then, for every $\eps > 0$, there exists $C_0 > 0$ (depending only on $\eps$ and $K$) such that, with probability at least $1 - \eps$,
\begin{equation} \label{eq:bnd:weyl}
	\sigma_j - C_0 \sqrt{j} \leq \sigma_j' \leq \sigma_j + C_0 \sqrt{r} + 2\sqrt{j} \frac{ \|E\|^2}{\sigma_j'} + j \frac{\|E\|^3}{{\sigma_j'}^2}  
\end{equation}
for all $1 \leq j \leq r$. 
\end{corollary}
Corollary \ref{cor:weyl} is an immediate consequence of Theorem \ref{thm:probweyl}, Lemma \ref{lemma:conc-nonsym}, and the union bound.  In particular, the bound in \eqref{eq:bnd:weyl} holds for all values of $1 \leq j \leq r$ simultaneously with probability at least $1 - \eps$. 

\subsection{Related results}
To conclude this section, let us mention a few related results.  
In \cite{V}, the second author managed to prove 
\begin{equation*}  
	\sin ^2 \angle(v_1, v'_1)  \le  C \frac{ \sqrt{r \log n} }{\delta } 
\end{equation*}  
under certain conditions.  While the right-hand side is quite close to the  optimal form 
 in Theorem \ref{theorem:main1}, the main problem here is that in  the left-hand side  one needs to square the sine function. 
 The bound for 
 $\sin \angle (v_i, v_i') $ with $i \ge 2$ was done by an inductive argument and was rather complicated.  Finally, the problem of estimating the singular values was not addressed at all in  \cite{V}.

Related results have also been obtained in the case where the random matrix $E$ contains Gaussian entries.  In \cite{RRW}, R.~Wang estimates the non-asymptotic distribution of the singular vectors when the entries of $E$ are iid standard normal random variables.  Recently, Allez and Bouchaud have studied the eigenvector dynamics of $A+E$ when $A$ is a real symmetric matrix and $E$ is a symmetric Brownian motion (that is, $E$ is a diffusive matrix process constructed from a family of independent real Brownian motions) \cite{AB}.  Our results also seems to have a close tie to 
the study of spiked covariance matrices, where a different kind of perturbation has been considered; see \cite{Ma, John,Nadler} for details. It would be  interesting to 
find a common generalization for these problems.

\section{Overview and outline}

We now briefly give an overview of the paper and discuss some of the key ideas behind the proof of our main results.  For simplicity, let us assume that $A$ and $E$ are $n \times n$ real symmetric matrices.  (In fact, we will symmetrize the problem in Section \ref{sec:prelim} below.)  Let $\sigma_1 \geq \cdots \geq \sigma_n$ be the eigenvalues of $A$ with corresponding (orthonormal) eigenvectors $v_1, \ldots, v_n$.  Let $\sigma_1'$ be the largest eigenvalue of $A+E$ with corresponding (unit) eigenvector $v_1'$.

Suppose we wish to bound $\sin \angle(v_1, v_1')$ (from Theorem \ref{thm:main}).  Since 
$$ \sin^2 \angle (v_1, v_1') = 1- \cos^2 \angle(v_1, v_1') = \sum_{k=2}^n | v_k \cdot v_1' |^2, $$
it suffices to bound $|v_k \cdot v_1'|$ for $k=2, \ldots, n$.  Let us consider the case when $k=2, \ldots, r$.  In this case, we have
$$ v_k^\mathrm{T} (A+E) v_1' - v_k^\mathrm{T} A v_1' = v_k^\mathrm{T} E v_1'. $$
Since $(A+E) v_1' = \sigma_1' v_1'$ and $v_k^\mathrm{T} A = \sigma_k v_k$, we obtain
$$ |\sigma_1' - \sigma_k| |v_k \cdot v_1'| \leq | v_k^\mathrm{T}E v_1' |. $$

Thus, the problem of bounding $|v_k \cdot v_1'|$ reduces to obtaining an upper bound for $| v_k^\mathrm{T}E v_1' |$ and a lower bound for the gap $|\sigma_1' - \sigma_k|$.  We will obtain bounds for both of these terms by using the concentration property (Definition \ref{def:concentration}).  

More generally, in Section \ref{sec:prelim}, we will apply the concentration property to obtain lower bounds for the gaps $\sigma_j' - \sigma_k$ when $j < k$, which will hold with high probability.  Let us illustrate this by now considering the gap $\sigma_1' - \sigma_2$.  Indeed, we note that
$$ \sigma_1' = \| A + E \| \geq v_1^\mathrm{T} (A+E) v_1 = \sigma_1 + v_1^\mathrm{T} E v_1. $$
Applying the concentration property \eqref{eq:concentration}, we see that $\sigma_1' > \sigma_1 - t$ with probability at least $1 - C_1 \exp(- c_1 t^{\gamma})$.  As $\delta := \sigma_1 - \sigma_2$, we in fact observe that
$$ \sigma_1' - \sigma_2 = \sigma_1' - \sigma_1 + \delta > \delta - t. $$
Thus, if $\delta$ is sufficiently large, we have (say) $\sigma_1' - \sigma_2 \geq \delta/2$ with high probability.  

In Section \ref{sec:proof}, we will again apply the concentration property to obtain upper bounds for terms of the form $v_k E v_j'$.  At the end of Section \ref{sec:proof}, we combine these bounds to complete the proof of Theorems  \ref{thm:main}, \ref{thm:subspace}, \ref{thm:general}, and \ref{thm:probweyl}.  In Section \ref{sec:concentration}, we discuss the $(C_1,c_1,\gamma)$-concentration property (Definition \ref{def:concentration}).  In particular, we generalize some previous results obtained by the second author in \cite{V}.  Finally, in Section \ref{section:app}, we present some applications of our main results.

Singular subspace perturbation bounds are applicable to a wide variety of problems.  For instance, \cite{CZ} discuss several applications of these bounds to high-dimensional statistics including high dimensional clustering, canonical correlation analysis (CCA), and matrix recovery.  In Section \ref{section:app}, we show how our results can be applied to the matrix recovery problem. The general matrix recovery problem  is the following.  $A$ is a large matrix.  However, the matrix $A$ is unknown to us.  We can only observe its noisy perturbation $A+E$, or in some cases just a small portion of the perturbation.  Our goal is to reconstruct $A$  or estimate an  important parameter
as accurately as possible from this observation. 
Furthermore,  several problems from combinatorics and theoretical computer science  can also be formulated in this setting. 
Special instances  of the matrix recovery problem have been investigated by many researchers using  spectral techniques  and combinatorial arguments in ingenious ways \cite{AM, AK, AKS,AzarMc,CCS,CP,CR,CRT,CT,Cest, DGP, KMO,KMO2,Krank,KLT,Kucera,MHT,Mc,NW,RVsamp}.  

We propose   the following simple analysis:   if  $A$ has rank $r$ and $1 \le j \le r$, then the projection of $A+E$ on the subspace  $V'$ spanned by the first $j$ singular vectors of $A+E$ is close to the projection of $A+E$ onto  the subspace  $V$ spanned by the first $j$ singular vectors of $A$, as our new results show that $V$ and $V'$ are very  close. Moreover, we can also show that the projection of $E$  onto $V$ is typically small.  Thus,  by projecting $A+E$ onto $V'$, we obtain  a good approximation of the rank $j$ approximation of $A$.   In certain cases, we  can repeat the above  operation a few times to obtain sufficient 
information to recover $A$ completely or to estimate the required parameter  with high accuracy and certainty.

\section{Preliminary tools} \label{sec:prelim}

In this section, we present some of the preliminary tools we will need to prove Theorems \ref{thm:main}, \ref{thm:subspace}, \ref{thm:general}, and \ref{thm:probweyl}.  

To begin, we define the $(m+n) \times (m+n)$ symmetric block matrices
\begin{equation} \label{eq:def:tildeA}
	\tilde{A} := \begin{bmatrix} 0 & A \\ A^\mathrm{T} & 0 \end{bmatrix} 
\end{equation}
and 
$$ \tilde{E} := \begin{bmatrix} 0 & E \\ E^\mathrm{T} & 0 \end{bmatrix}. $$

We will work with the matrices $\tilde{A}$ and $\tilde{E}$ instead of $A$ and $E$. If $A^Tu  = \sigma v$ and $Av=\sigma u$,  then $\tilde{A}^T (u^T, v^T)^T=\sigma (u^T, v^T)^T$ and $\tilde{A}^T (u^T, -v^T)^T= -\sigma (u^T, -v^T)^T$. In particular, the non-zero eigenvalues of $\tilde{A}$ are $\pm \sigma_1, \ldots, \pm \sigma_r$ and the eigenvectors are formed from the left and right singular vectors of $A$.  Similarly, the non-trivial eigenvalues of $\tilde{A} + \tilde{E}$ are $\pm \sigma_1', \ldots, \pm \sigma_{\min\{m,n\}}'$ (some of which may be zero) and the eigenvectors are formed from the left and right singular vectors of $A+E$.  

Along these lines, we introduce the following notation, which differs from the notation used above.  The non-zero eigenvalues of $\tilde{A}$ will be denoted by $\pm \sigma_1, \ldots, \pm \sigma_r$ with orthonormal eigenvectors $u_k$, $k=\pm 1, \ldots, \pm r$ such that
$$ \tilde{A} u_k = \sigma_k u_k, \qquad \tilde{A} u_{-k} = - \sigma_k u_{-k}, \qquad k = 1, \ldots, r. $$
Let $v_1, \ldots, v_j$ be the orthonormal eigenvectors of $\tilde{A}+\tilde{E}$ corresponding to the $j$-largest eigenvalues $\lambda_1 \geq \cdots \geq \lambda_j$.

In order to prove Theorems \ref{thm:main}, \ref{thm:subspace}, \ref{thm:general}, and \ref{thm:probweyl}, it suffices to work with the eigenvectors and eigenvalues of the matrices $\tilde{A}$ and $\tilde{A}+\tilde{E}$.  Indeed, Proposition \ref{prop:sine} will bound the angle between the singular vectors of $A$ and $A+E$ by the angle between the corresponding eigenvectors of $\tilde{A}$ and $\tilde{A} + \tilde{E}$.  

\begin{proposition} \label{prop:sine}
Let $u_1, v_1 \in \mathbb{R}^m$ and $u_2, v_2 \in \mathbb{R}^n$ be unit vectors.  Let $u, v \in \mathbb{R}^{m+n}$ be given by
$$ u = \begin{bmatrix} u_1 \\ u_2 \end{bmatrix}, \quad v = \begin{bmatrix} v_1 \\ v_2 \end{bmatrix}.  $$
Then
$$ \sin^2 \angle(u_1, v_1) + \sin^2 \angle(u_2, v_2) \leq 2 \sin^2 \angle(u, v). $$
\end{proposition}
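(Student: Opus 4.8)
The plan is to collapse the statement to a one-line algebraic inequality by writing everything in terms of the two inner products
\[
a := u_1 \cdot v_1, \qquad b := u_2 \cdot v_2 .
\]
Since $u_1,v_1$ and $u_2,v_2$ are unit vectors, Cauchy--Schwarz gives $|a|\le 1$ and $|b|\le 1$, and because all angles are measured in $[0,\pi/2]$ we have $\cos\angle(u_1,v_1) = |a|$ and $\cos\angle(u_2,v_2) = |b|$; hence
\[
\sin^2\angle(u_1,v_1) = 1 - a^2, \qquad \sin^2\angle(u_2,v_2) = 1 - b^2 .
\]

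Next I would compute the right-hand side. From the block form, $u\cdot v = u_1\cdot v_1 + u_2\cdot v_2 = a+b$, while $\|u\| = \|v\| = \sqrt{2}$, so $\cos\angle(u,v) = |a+b|/2$ and therefore $\sin^2\angle(u,v) = 1 - (a+b)^2/4$. Substituting all of this into the claimed inequality turns it into
\[
(1-a^2) + (1-b^2) \;\le\; 2\Bigl(1 - \tfrac{(a+b)^2}{4}\Bigr),
\]
which, after clearing the factor $2$ and cancelling, is equivalent to $(a+b)^2 \le 2(a^2+b^2)$, i.e. to $(a-b)^2 \ge 0$. This is obviously true, which completes the argument.

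The only point requiring a little care is the normalization/sign bookkeeping: the stacked vectors $u,v$ are \emph{not} unit vectors (each has norm $\sqrt 2$), and since angles are taken in $[0,\pi/2]$ one must keep absolute values around the inner products $a,b$ and $a+b$ throughout. Neither of these is a genuine obstacle; I do not expect any real difficulty here, as the statement reduces to the elementary inequality $(a-b)^2\ge 0$ once the trigonometric quantities are expressed through $a$ and $b$.
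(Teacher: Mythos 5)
Your argument is correct and is essentially the paper's own proof: both reduce the claim, via $\|u\|^2=\|v\|^2=2$ and $u\cdot v = u_1\cdot v_1 + u_2\cdot v_2$, to the elementary inequality $(a+b)^2 \le 2(a^2+b^2)$, which the paper applies in the form $\tfrac14|u\cdot v|^2 \le \tfrac12|u_1\cdot v_1|^2 + \tfrac12|u_2\cdot v_2|^2$. Your bookkeeping of the normalization $\|u\|=\sqrt 2$ and of the absolute values matches what the paper does, so there is nothing to add.
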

\begin{proof}
Since $\|u\|^2 = \|v\|^2 = 2$, we have
\begin{align*}
	\cos^2 \angle(u,v) = \frac{1}{4} |u \cdot v|^2 \leq \frac{1}{2} |u_1 \cdot v_1|^2 + \frac{1}{2} |u_2 \cdot v_2|^2.
\end{align*}
Thus, 
\begin{align*}
	\sin^2 \angle(u,v) = 1 - \cos^2 \angle(u,v) \geq \frac{1}{2} \sin^2 \angle(u_1, v_1) + \frac{1}{2} \sin^2\angle(u_2, v_2),
\end{align*}
and the claim follows.
\end{proof}

We now introduce some useful lemmas.  The first lemma below, states that if $E$ is $(C_1,c_1,\gamma)$-concentrated, then $\tilde{E}$ is $(\tilde{C}_1,\tilde{c}_1,\gamma)$-concentrated, for some new constants $\tilde{C}_1 := 2C_1$ and $\tilde{c}_1:= c_1/2^{\gamma}$. 

\begin{lemma} \label{lemma:tilde}
Assume that $E$ is $(C_1, c_1,\gamma)$-concentrated for a trio of constants $C_1, c_1, \gamma >0$.  Let $\tilde{C}_1 := 2C_1$ and $\tilde{c}_1:= c_1/2^{\gamma}$.  Then for all unit vectors $u,v \in \mathbb{R}^{n+m}$, and every $t>0$,
\begin{equation} \label{eq:tilde-concentration}
	\Prob( |u^t \tilde{E} v| > t ) \leq \tilde{C}_1 \exp(-\tilde{c}_1 t^\gamma ).
\end{equation}	
\end{lemma}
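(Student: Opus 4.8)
\textbf{Proof proposal for Lemma \ref{lemma:tilde}.}

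The plan is to reduce the bilinear form $u^{\T} \tilde{E} v$ to a bilinear form in $E$ between (suitably normalized) subvectors of $u$ and $v$, and then invoke the hypothesis that $E$ is $(C_1, c_1, \gamma)$-concentrated. First I would write $u, v \in \R^{m+n}$ in block form, matching the block structure of $\tilde{E}$: set
$$ u = \begin{bmatrix} u_1 \\ u_2 \end{bmatrix}, \qquad v = \begin{bmatrix} v_1 \\ v_2 \end{bmatrix}, $$
with $u_1, v_1 \in \R^m$ and $u_2, v_2 \in \R^n$. From the definition of $\tilde{E}$ one computes directly that
$$ u^{\T} \tilde{E} v = u_1^{\T} E v_2 + u_2^{\T} E^{\T} v_1 = u_1^{\T} E v_2 + v_1^{\T} E u_2. $$

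Next I would handle each of the two terms separately using a union bound. For the first term, if $u_1 \neq 0$ and $v_2 \neq 0$, normalize: $u_1^{\T} E v_2 = \|u_1\| \, \|v_2\| \, (\hat{u}_1^{\T} E \hat{v}_2)$ where $\hat{u}_1, \hat{v}_2$ are unit vectors. Since $\|u_1\|^2 + \|u_2\|^2 = 1$ and likewise for $v$, we have $\|u_1\| \leq 1$ and $\|v_2\| \leq 1$, so $|u_1^{\T} E v_2| \leq |\hat{u}_1^{\T} E \hat{v}_2|$ pointwise, and by \eqref{eq:concentration} applied to $\hat{u}_1, \hat{v}_2$,
$$ \Prob\left( |u_1^{\T} E v_2| > t/2 \right) \leq \Prob\left( |\hat{u}_1^{\T} E \hat{v}_2| > t/2 \right) \leq C_1 \exp\left( -c_1 (t/2)^\gamma \right) = C_1 \exp\left( -\frac{c_1}{2^\gamma} t^\gamma \right); $$
if $u_1 = 0$ or $v_2 = 0$ the term vanishes and the bound is trivial. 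The same estimate holds for $|v_1^{\T} E u_2|$ with the roles of the unit-vector reductions played by $\hat{v}_1 \in \R^m$ and $\hat{u}_2 \in \R^n$. A union bound over the event $\{|u^{\T}\tilde{E}v| > t\} \subseteq \{|u_1^{\T} E v_2| > t/2\} \cup \{|v_1^{\T} E u_2| > t/2\}$ then gives
$$ \Prob\left( |u^{\T} \tilde{E} v| > t \right) \leq 2 C_1 \exp\left( -\frac{c_1}{2^\gamma} t^\gamma \right) = \tilde{C}_1 \exp\left( -\tilde{c}_1 t^\gamma \right), $$
which is exactly \eqref{eq:tilde-concentration}.

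There is no real obstacle here — the argument is entirely elementary. The only points requiring a modicum of care are: (i) the triangle-inequality split of the event, which costs the factor $2$ in $C_1$ and forces the $t \mapsto t/2$ rescaling (hence the $2^\gamma$ in the exponent constant); and (ii) remembering that the subvectors $u_1, v_2$, etc., need not be unit vectors, so one must normalize and exploit that their norms are at most $1$ to preserve the direction of the inequality. Both are routine. One could alternatively absorb the constants slightly differently (e.g. split at $t$ into each term separately with a direct union bound), but the version above produces precisely the stated constants $\tilde{C}_1 = 2C_1$ and $\tilde{c}_1 = c_1/2^\gamma$.
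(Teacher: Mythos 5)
Your proof is correct and follows essentially the same route as the paper: the same block decomposition of $u^{\mathrm{T}}\tilde{E}v$ into $u_1^{\mathrm{T}}Ev_2 + v_1^{\mathrm{T}}Eu_2$, the same normalization of the subvectors (exploiting $\|u_1\|,\|v_2\|\leq 1$), and the same union bound at level $t/2$, yielding exactly the stated constants $\tilde{C}_1 = 2C_1$ and $\tilde{c}_1 = c_1/2^\gamma$. No gaps.
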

\begin{proof}
Let 
$$ u = \begin{bmatrix} u_1 \\ u_2 \end{bmatrix}, \quad v = \begin{bmatrix} v_1 \\ v_2 \end{bmatrix} $$
be unit vectors in $\mathbb{R}^{m+n}$, where $u_1, v_1 \in \mathbb{R}^m$ and $u_2,v_2 \in \mathbb{R}^n$.  We note that
$$  u^{\mathrm{T}} \tilde{E} v  = u_1^\mathrm{T} E v_2 + u_2^\mathrm{T} E^\mathrm{T} v_1. $$
Thus, if any of the vectors $u_1, u_2, v_1, v_2$ are zero, \eqref{eq:tilde-concentration} follows immediately from \eqref{eq:concentration}.  Assume all the vectors $u_1, u_2, v_1, v_2$ are nonzero.  Then
$$ | u^{\mathrm{T}} \tilde{E} v | = |u_1^\mathrm{T} E v_2 + u_2^\mathrm{T} E^\mathrm{T} v_1| \leq \frac{|u_1^\mathrm{T} E v_2|}{\|u_1\| \|v_2\|} + \frac{|v_1^\mathrm{T} E u_2|}{\|u_2\|\|v_1\|}. $$
Thus, by \eqref{eq:concentration}, we have
\begin{align*}
	\Prob( |u^\mathrm{T} \tilde{E} v| > t) &\leq \Prob \left( \frac{|u_1^\mathrm{T} E v_2|}{\|u_1\| \|v_2\|}  > \frac{t}{2} \right) + \Prob \left( \frac{|v_1^\mathrm{T} E u_2|}{\|u_2\|\|v_1\|} > \frac{t}{2} \right) \\
	&\leq 2C_1 \exp \left( -c_1 \frac{ t^\gamma }{ 2^\gamma } \right),
\end{align*}
and the proof of the lemma is complete.  
\end{proof}

We will also consider the spectral norm of $\tilde{E}$.  Since $\tilde{E}$ is a symmetric matrix whose eigenvalues in absolute value are given by the singular values of $E$, it follows that
\begin{equation} \label{eq:normE}
	\| \tilde{E} \| = \|E \|. 
\end{equation}

We introduce $\varepsilon$-nets as a convenient way to discretize a compact set.  Let $\varepsilon > 0$.  A set $X$ is an $\varepsilon$-net of a set $Y$ if for any $y \in Y$, there exists $x \in X$ such that $\|x-y\| \leq \varepsilon$.  The following estimate for the maximum size of an $\varepsilon$-net of a sphere is well-known (see for instance \cite{RV}).

\begin{lemma} \label{lemma:net}
A unit sphere in $d$ dimensions admits an $\varepsilon$-net of size at most 
$$ \left(1+\frac{2}{\varepsilon} \right)^d.$$
\end{lemma}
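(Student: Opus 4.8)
The plan is to use a maximal packing argument. First I would let $N = \{x_1, \ldots, x_M\}$ be a maximal subset of the unit sphere $S^{d-1} \subset \mathbb{R}^d$ with the property that $\|x_i - x_j\| > \varepsilon$ for all $i \neq j$; such a maximal set exists by compactness (or by Zorn's lemma, though compactness makes it elementary since any such set is finite, as I argue below). The key observation is that maximality forces $N$ to be an $\varepsilon$-net: if some $y \in S^{d-1}$ had $\|y - x_i\| > \varepsilon$ for every $i$, then $N \cup \{y\}$ would be a strictly larger set with the same separation property, contradicting maximality. So it only remains to bound $M = |N|$.

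To bound $M$, I would use a volume (packing) argument in $\mathbb{R}^d$. The open balls $B(x_i, \varepsilon/2)$ are pairwise disjoint: if $z \in B(x_i,\varepsilon/2) \cap B(x_j,\varepsilon/2)$ then $\|x_i - x_j\| \leq \|x_i - z\| + \|z - x_j\| < \varepsilon$, a contradiction. Moreover, since each $x_i$ lies on the unit sphere, every such ball is contained in the annulus-enclosing ball $B(0, 1 + \varepsilon/2)$. Comparing Lebesgue measures and using that the volume of a Euclidean ball of radius $\rho$ in $\mathbb{R}^d$ scales as $\rho^d$ times a dimension-dependent constant, the disjointness gives
$$ M \left( \frac{\varepsilon}{2} \right)^d \leq \left( 1 + \frac{\varepsilon}{2} \right)^d, $$
and hence $M \leq \left( \frac{1 + \varepsilon/2}{\varepsilon/2} \right)^d = \left( 1 + \frac{2}{\varepsilon} \right)^d$. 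This also retroactively justifies that $N$ is finite, closing the small gap about the existence of a maximal separated set. Combining this with the fact that $N$ is an $\varepsilon$-net completes the proof.

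There is no serious obstacle here; this is a textbook argument and the excerpt even points to \cite{RV} for it. The only point requiring a word of care is the existence of a \emph{maximal} $\varepsilon$-separated set — but this is immediate because the volume bound shows any $\varepsilon$-separated subset of the sphere has size at most $(1 + 2/\varepsilon)^d$, so one can simply take a separated set of maximum cardinality. One could alternatively phrase the whole argument greedily (keep adding points with the required separation until none can be added), which makes the finiteness and maximality simultaneous and avoids any appeal to compactness or Zorn's lemma.
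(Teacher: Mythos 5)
Your argument is correct and is exactly the standard maximal-packing-plus-volume-comparison proof; the paper itself states this lemma without proof, citing \cite{RV}, and the proof given there is the same one you wrote. Nothing to add.
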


Lemmas \ref{lemma:r-norm}, \ref{lemma:largest}, and \ref{lemma:j-largest} below are consequences of  the concentration property \eqref{eq:tilde-concentration}.  

\begin{lemma} \label{lemma:r-norm}
Assume that $E$ is $(C_1, c_1, \gamma)$-concentrated for a trio of constants $C_1, c_1, \gamma >0$.  Let $A$ be a $m \times n$ matrix with rank $r$.  Let $U$ be the $(m+n) \times 2r$ matrix whose columns are the vectors $u_1, \ldots, u_r, u_{-1}, \ldots, u_{-r}$.  Then, for any $t > 0$,
$$ \Prob \left( \|U^\mathrm{T} \tilde{E}U\| >  t r^{1/\gamma} \right) \leq \tilde{C}_1 9^{2r} \exp\left( - \tilde{c}_1 r \frac{t^\gamma}{2^\gamma} \right). $$
\end{lemma}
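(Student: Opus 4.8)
The plan is to bound the operator norm of the $2r \times 2r$ matrix $M := U^{\mathrm{T}} \tilde{E} U$ by combining an $\varepsilon$-net argument over the unit sphere in $\mathbb{R}^{2r}$ with the concentration property \eqref{eq:tilde-concentration} for $\tilde{E}$ established in Lemma \ref{lemma:tilde}. First I would fix a $\frac{1}{2}$-net $\mathcal{N}$ of the unit sphere $S^{2r-1}$; by Lemma \ref{lemma:net} we may take $|\mathcal{N}| \leq 5^{2r} \leq 9^{2r}$ (the slightly generous constant $9$ absorbs the standard net-reduction factor). The key discretization fact is that for a symmetric matrix $M$ one has $\|M\| \leq 2 \max_{x,y \in \mathcal{N}} |x^{\mathrm{T}} M y|$ (or one can use a single-net bound $\|M\| \le (1-2\varepsilon)^{-1}\max_{x\in\mathcal N}|x^{\mathrm T}Mx|$ with $\varepsilon = 1/2$ replaced by something smaller — but the bilinear two-net version is cleaner here since $M$ need not be symmetric in general, though in fact $M$ is symmetric). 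So it suffices to control $|x^{\mathrm{T}} M y|$ for each fixed pair $x, y \in \mathcal{N}$.

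For fixed unit vectors $x, y \in \mathbb{R}^{2r}$, write $x^{\mathrm{T}} M y = (Ux)^{\mathrm{T}} \tilde{E} (Uy)$. Since the columns of $U$ are orthonormal, $Ux$ and $Uy$ are unit vectors in $\mathbb{R}^{m+n}$, so \eqref{eq:tilde-concentration} gives
\begin{equation*}
\Prob\bigl( |x^{\mathrm{T}} M y| > s \bigr) \leq \tilde{C}_1 \exp(-\tilde{c}_1 s^\gamma)
\end{equation*}
for every $s > 0$. I would then take a union bound over all pairs $(x,y) \in \mathcal{N} \times \mathcal{N}$, of which there are at most $9^{4r}$ — except that this overcounts; the cleaner route exploiting symmetry of $M$ uses only the diagonal-type bound $\|M\| \le 4\max_{x \in \mathcal N}|x^{\mathrm T} M x|$ so that the union is over $|\mathcal{N}| \le 9^{2r}$ points, matching the $9^{2r}$ in the claimed probability. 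Setting $s := t r^{1/\gamma}/2$ (so that $2s = tr^{1/\gamma}$, or with the symmetric-form constant, $s := tr^{1/\gamma}/4$) and recalling $\tilde{c}_1 = c_1/2^\gamma$, the per-point failure probability is $\tilde{C}_1 \exp(-\tilde{c}_1 (tr^{1/\gamma}/2)^\gamma) = \tilde{C}_1\exp(-\tilde{c}_1 r t^\gamma/2^\gamma) = \tilde C_1 \exp(-c_1 r t^\gamma/4^\gamma)$; multiplying by the net size $9^{2r}$ yields exactly the bound $\tilde{C}_1 9^{2r} \exp(-\tilde{c}_1 r t^\gamma/2^\gamma)$ stated in the lemma (after rewriting $\tilde c_1/2^\gamma = c_1/4^\gamma$ consistently with the paper's later statements).

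The main point requiring care — and the step I expect to be the real obstacle to getting the constants exactly right — is the bookkeeping between the net mesh size, the constant relating $\|M\|$ to the max over the net, the choice of $s$ in terms of $t$, and the repeated halving hidden in $\tilde{c}_1 = c_1/2^\gamma$. None of these is deep, but it is easy to end up with $9^{2r}$ versus $9^{4r}$, or $t^\gamma/2^\gamma$ versus $t^\gamma/4^\gamma$, unless one commits to a specific net ($\varepsilon = 1/2$), a specific norm-discretization inequality (the symmetric bilinear form bound, using that $U^{\mathrm T}\tilde E U$ is symmetric), and a specific substitution. I would therefore carry out these three choices explicitly and verify that they compose to give precisely the exponent $-\tilde{c}_1 r t^\gamma/2^\gamma$ and prefactor $\tilde{C}_1 9^{2r}$. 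The only genuinely substantive input is that $U$ has orthonormal columns, which is what lets the $(m+n)$-dimensional concentration estimate be applied to the compressed $2r$-dimensional object with no loss in the parameter $t$ — this is the quantitative incarnation of the "real dimension is $r$" heuristic and is precisely why the bound features $r^{1/\gamma}$ rather than $n^{1/\gamma}$.
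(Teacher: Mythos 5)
Your approach is exactly the paper's: compress to the symmetric $2r\times 2r$ matrix $U^{\mathrm{T}}\tilde{E}U$, note that $Ux$ is a unit vector for unit $x$ so Lemma \ref{lemma:tilde} applies, discretize the sphere in $\mathbb{R}^{2r}$, and union bound. The bookkeeping you deferred is resolved by taking a $1/4$-net: Lemma \ref{lemma:net} gives $|\mathcal{N}|\leq (1+8)^{2r}=9^{2r}$, and for a symmetric matrix $B$ and a $1/4$-net one has $\|B\|\leq 2\max_{x\in\mathcal{N}}|x^{\mathrm{T}}Bx|$, so the per-point threshold is $tr^{1/\gamma}/2$ and the exponent is exactly $-\tilde{c}_1 r t^{\gamma}/2^{\gamma}$; your tentative $1/2$-net does not work here, since the quadratic-form discretization constant $(1-2\varepsilon)^{-1}$ degenerates at $\varepsilon=1/2$, and the fallbacks you float give either $9^{4r}$ or $4^{\gamma}$ rather than the stated constants.
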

\begin{proof}
Clearly $U^\mathrm{T} \tilde{E} U$ is a symmetric  $2r \times 2r$ matrix.  Let $S$ be the unit sphere in $\mathbb{R}^{2r}$.  Let $\mathcal{N}$ be a $1/4$-net of $S$.  It is easy to verify (see for instance \cite{RV}) that for any $2r \times 2r$ symmetric matrix $B$,
$$ \|B\| \leq 2 \max_{x \in \mathcal{N}} |x^\ast B x| . $$

For any fixed $x \in \mathcal{N}$, we have
$$ \Prob( |x^\mathrm{T} U^\mathrm{T} \tilde{E} U x | > t) \leq \tilde{C}_1 \exp(-\tilde{c}_1 t^\gamma) $$
by Lemma \ref{lemma:tilde}.  Since $|\mathcal{N}| \leq 9^{2r}$, we obtain
\begin{align*}
	\Prob ( \| U^\mathrm{T} \tilde{E} U \| > t r^{1/\gamma}) &\leq \sum_{x \in \mathcal{N}} \Prob\left( |x^\mathrm{T} U^\mathrm{T} \tilde{E} U x | > \frac{1}{2} t r^{1/\gamma} \right) \\
	& \leq \tilde{C}_1 9^{2r} \exp\left(-\tilde{c}_1 r \frac{t^\gamma}{2^{\gamma}} \right).
\end{align*}
\end{proof}

\begin{lemma} \label{lemma:largest}
Assume that $E$ is $(C_1, c_1, \gamma)$-concentrated for a trio of constants $C_1, c_1, \gamma >0$.  Suppose $A$ has rank $r$.  Then, for any $t > 0$, 
\begin{equation} \label{eq:lambda1bnd}
	\lambda_1 \geq \sigma_1 - t 
\end{equation}
with probability at least $1 - \tilde{C}_1 \exp(-\tilde{c}_1 t^{\gamma})$.  

In particular, if $\sigma_1 > 0$, then $\lambda_1 \geq \frac{\sigma_1}{2}$ with probability at least $1 - \tilde{C}_1 \exp \left( -\tilde{c}_1 \frac{\sigma_1^\gamma}{2^\gamma} \right)$.  If, in addition, $\delta > 0$, then 
$$ \lambda_1 - \sigma_k \geq \frac{1}{2} \delta $$
for $k = 2,\ldots, r $ with probability at least $1 - \tilde{C}_1 \exp \left( -\tilde{c}_1 \frac{\delta^\gamma}{2^\gamma} \right)$.
\end{lemma}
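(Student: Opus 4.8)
The plan is to use the variational (Courant--Fischer) characterization of the largest eigenvalue together with the concentration estimate of Lemma \ref{lemma:tilde}, applied to a single, deterministic test vector. Since $\tilde{A}+\tilde{E}$ is symmetric, its largest eigenvalue satisfies $\lambda_1 = \max_{\|x\|=1} x^{\mathrm{T}}(\tilde{A}+\tilde{E})x$. Plugging in the unit eigenvector $u_1$ of $\tilde{A}$ with $\tilde{A}u_1 = \sigma_1 u_1$ (which is a fixed vector, since $A$ is deterministic) gives
$$ \lambda_1 \;\geq\; u_1^{\mathrm{T}} \tilde{A} u_1 + u_1^{\mathrm{T}} \tilde{E} u_1 \;=\; \sigma_1 + u_1^{\mathrm{T}} \tilde{E} u_1. $$

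Next I would invoke Lemma \ref{lemma:tilde} with $u = v = u_1$, which yields $\Prob\bigl(|u_1^{\mathrm{T}} \tilde{E} u_1| > t\bigr) \leq \tilde{C}_1 \exp(-\tilde{c}_1 t^\gamma)$. On the complementary event we have $u_1^{\mathrm{T}} \tilde{E} u_1 \geq -t$, hence $\lambda_1 \geq \sigma_1 - t$; this establishes \eqref{eq:lambda1bnd} with probability at least $1 - \tilde{C}_1 \exp(-\tilde{c}_1 t^\gamma)$.

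For the two ``in particular'' assertions I would simply specialize $t$. Taking $t = \sigma_1/2$ gives $\lambda_1 \geq \sigma_1/2$ with probability at least $1 - \tilde{C}_1 \exp(-\tilde{c}_1 \sigma_1^\gamma/2^\gamma)$. For the gap bound, note that $\delta = \sigma_1 - \sigma_2 \leq \sigma_1 - \sigma_k$ for every $k = 2, \ldots, r$, since $\sigma_2 \geq \sigma_k$. Thus, taking $t = \delta/2$, on the single event $\{\lambda_1 \geq \sigma_1 - \delta/2\}$ we get $\lambda_1 - \sigma_k \geq \sigma_1 - \sigma_k - \delta/2 \geq \delta - \delta/2 = \delta/2$ for all such $k$ simultaneously, with probability at least $1 - \tilde{C}_1 \exp(-\tilde{c}_1 \delta^\gamma/2^\gamma)$.

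The only point meriting a moment's care is that the last claim should hold for all $k = 2, \ldots, r$ at once; but since it is deduced from the single scalar inequality $\lambda_1 \geq \sigma_1 - \delta/2$ via the elementary bound $\sigma_k \leq \sigma_2$, no union bound over $k$ is required, and the probability is not degraded. Otherwise the argument is entirely routine, so I do not anticipate any genuine obstacle here.
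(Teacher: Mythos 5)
Your argument is correct and is essentially the paper's own proof: both bound $\lambda_1$ from below by the quadratic form $u_1^{\mathrm{T}}(\tilde{A}+\tilde{E})u_1 = \sigma_1 + u_1^{\mathrm{T}}\tilde{E}u_1$, apply Lemma \ref{lemma:tilde} to the single fixed vector $u_1$, and then specialize $t=\sigma_1/2$ and $t=\delta/2$, using $\sigma_k \leq \sigma_2$ to handle all $k$ on one event. The only cosmetic difference is that you invoke the variational characterization directly where the paper writes $\lambda_1 = \|\tilde{A}+\tilde{E}\|$; these are interchangeable here.
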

\begin{proof}
We observe that
$$ \lambda_1 = \|\tilde{A} + \tilde{E}\| \geq u_1^\mathrm{T} (\tilde{A} + \tilde{E}) u_1 = \sigma_1 + u_1^\mathrm{T} \tilde{E} u_1. $$
By Lemma \ref{lemma:tilde}, we have 
$$ \Prob( |u_1^\mathrm{T} \tilde{E} u_1| > t) \leq \tilde{C}_1 \exp( -\tilde{c}_1 t^\gamma) $$
for every $t > 0$, and \eqref{eq:lambda1bnd} follows.  

If $\sigma_1 > 0$, then the bound $\lambda_1 \geq \frac{\sigma_1}{2}$ can be obtained by taking $t = \sigma_1/2$ in \eqref{eq:lambda1bnd}.  Assume $\delta > 0$.  Taking $t=\delta/2$ in \eqref{eq:lambda1bnd} yields 
$$ \lambda_1 - \sigma_k \geq \lambda_1 - \sigma_2 = \lambda_1 - \sigma_1 + \delta \geq \frac{\delta}{2} $$
for $k=2, \ldots, r$ with probability at least $1 - \tilde{C}_1 \exp \left( -\tilde{c}_1 \frac{\delta^\gamma}{2^\gamma} \right)$.
\end{proof}

Using the Courant minimax principle, Lemma \ref{lemma:largest} can be generalized to the following.

\begin{lemma} \label{lemma:j-largest}
Assume that $E$ is $(C_1, c_1, \gamma)$-concentrated for a trio of constants $C_1, c_1, \gamma >0$.  Suppose $A$ has rank $r$, and let $1 \leq j \leq r$ be an integer.  Then, for any $t > 0$, 
\begin{equation} \label{eq:lambdajbnd1}
	\lambda_j \geq \sigma_j - t
\end{equation}
with probability at least $1 - \tilde{C}_1 9^j \exp\left( - \tilde{c}_1 \frac{t^\gamma}{2^\gamma} \right)$.  

In particular, $\lambda_j \geq \frac{\sigma_j}{2}$ with probability at least $1 - \tilde{C}_1 9^j \exp \left( - \tilde{c}_1 \frac{\sigma_j^\gamma}{4^\gamma} \right)$. In addition, if $\delta_j > 0$, then 
\begin{equation} \label{eq:lambdajbnd2}
	\lambda_j - \sigma_k \geq \frac{\delta_j}{2}
\end{equation}
for $k=j+1, \ldots, r$ with probability at least $1 - \tilde{C}_1 9^j \exp \left( - \tilde{c}_1 \frac{\delta_j^\gamma}{4^\gamma} \right)$. 
\end{lemma}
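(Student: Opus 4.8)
The plan is to reduce the statement for $\lambda_j$ to the case $j=1$ handled in Lemma \ref{lemma:largest} by combining the Courant--Fischer minimax principle with an $\varepsilon$-net argument over a $j$-dimensional subspace. Recall that by the Courant minimax principle,
$$ \lambda_j = \max_{\substack{W \subseteq \R^{m+n} \\ \dim W = j}} \min_{\substack{w \in W \\ \|w\| = 1}} w^\T (\tilde{A} + \tilde{E}) w. $$
First I would choose the specific test subspace $W_0 := \mathrm{Span}\{u_1, \ldots, u_j\}$ (using the convention that these are the eigenvectors of $\tilde{A}$ for the $j$ largest eigenvalues $\sigma_1, \ldots, \sigma_j$, which is legitimate since $j \le r$ so $\sigma_j > 0$). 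Then $\lambda_j \geq \min_{w \in W_0, \|w\|=1} w^\T(\tilde A + \tilde E) w$, and for any unit $w = \sum_{i=1}^j c_i u_i \in W_0$ we have $w^\T \tilde A w = \sum_{i=1}^j c_i^2 \sigma_i \geq \sigma_j$ because $\|c\|=1$ and $\sigma_i \ge \sigma_j$ for $i \le j$. Hence
$$ \lambda_j \geq \sigma_j + \min_{\substack{w \in W_0 \\ \|w\|=1}} w^\T \tilde{E} w \geq \sigma_j - \max_{\substack{w \in W_0 \\ \|w\|=1}} |w^\T \tilde{E} w|. $$
So it suffices to show $\max_{w \in W_0, \|w\|=1} |w^\T \tilde{E} w| \le t$ with the claimed probability.

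Next I would discretize. Let $U_0$ be the $(m+n) \times j$ matrix with columns $u_1, \ldots, u_j$; writing $w = U_0 x$ with $x$ on the unit sphere $S \subseteq \R^j$ (the $u_i$ being orthonormal makes $\|w\|=\|x\|$), the quantity to bound is $\|U_0^\T \tilde E U_0\|$, the spectral norm of a $j \times j$ symmetric matrix. Take $\mathcal N$ a $1/4$-net of $S$; by Lemma \ref{lemma:net} we have $|\mathcal N| \le 9^j$, and the standard net bound gives $\|U_0^\T \tilde E U_0\| \le 2 \max_{x \in \mathcal N} |x^\T U_0^\T \tilde E U_0 x|$. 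For each fixed $x \in \mathcal N$, the vector $w = U_0 x$ is a fixed unit vector, so Lemma \ref{lemma:tilde} (the $(\tilde C_1, \tilde c_1, \gamma)$-concentration of $\tilde E$) gives $\Prob(|w^\T \tilde E w| > t/2) \le \tilde C_1 \exp(-\tilde c_1 (t/2)^\gamma) = \tilde C_1 \exp(-\tilde c_1 t^\gamma / 2^\gamma)$. Union bounding over $\mathcal N$ yields $\Prob(\|U_0^\T \tilde E U_0\| > t) \le \tilde C_1 9^j \exp(-\tilde c_1 t^\gamma / 2^\gamma)$, which combined with the display above proves \eqref{eq:lambdajbnd1}. (This is essentially a restriction of the argument in Lemma \ref{lemma:r-norm} to a $j$-dimensional rather than $2r$-dimensional subspace.)

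The two "in particular" consequences then follow by specializing $t$. Taking $t = \sigma_j/2$ in \eqref{eq:lambdajbnd1} gives $\lambda_j \ge \sigma_j/2$ with probability at least $1 - \tilde C_1 9^j \exp(-\tilde c_1 \sigma_j^\gamma / 4^\gamma)$, since $(\sigma_j/2)^\gamma/2^\gamma = \sigma_j^\gamma/4^\gamma$. For the gap bound, assume $\delta_j > 0$ and take $t = \delta_j/2$ in \eqref{eq:lambdajbnd1}; then for $k = j+1, \ldots, r$, using $\sigma_k \le \sigma_{j+1} = \sigma_j - \delta_j$,
$$ \lambda_j - \sigma_k \geq \lambda_j - \sigma_{j+1} = \lambda_j - \sigma_j + \delta_j \geq -\frac{\delta_j}{2} + \delta_j = \frac{\delta_j}{2}, $$
with probability at least $1 - \tilde C_1 9^j \exp(-\tilde c_1 \delta_j^\gamma / 4^\gamma)$, again because $(\delta_j/2)^\gamma / 2^\gamma = \delta_j^\gamma / 4^\gamma$. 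I do not anticipate a genuine obstacle here: the only place requiring a little care is making sure the test subspace $W_0$ is legitimate (it is, since $j \le r$ guarantees $\sigma_1, \ldots, \sigma_j$ are genuine positive eigenvalues of $\tilde A$ with an orthonormal eigenbasis) and that the net constant is handled cleanly — choosing a $1/4$-net rather than, say, a $1/2$-net is what produces the clean factor $9^j$ and the factor $2$ in the net bound, which then matches the $2^\gamma$ in the exponent.
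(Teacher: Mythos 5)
Your proof is correct and follows essentially the same route as the paper: the Courant minimax principle with test subspace $\mathrm{Span}\{u_1,\ldots,u_j\}$, a $1/4$-net of its unit sphere of size at most $9^j$ combined with Lemma \ref{lemma:tilde} at threshold $t/2$, and the same specializations $t=\sigma_j/2$ and $t=\delta_j/2$. The only cosmetic difference is that you package the net step as a spectral-norm bound for the $j\times j$ matrix $U_0^\mathrm{T}\tilde{E}U_0$, whereas the paper bounds $\sup_{v\in S}|v^\mathrm{T}\tilde{E}v|$ directly; these are identical.
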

\begin{proof}
It suffices to prove \eqref{eq:lambdajbnd1}.  Indeed, the bound $\lambda_j \geq \frac{\sigma_j}{2}$ follows from \eqref{eq:lambdajbnd1} by taking $t = \sigma_j/2$, and \eqref{eq:lambdajbnd2} follows by taking $t = \delta_j/2$.

Let $S$ be the unit sphere in $\mathrm{Span}\{u_1,\ldots,u_j\}$.  By the Courant minimax principle,
\begin{align*}
	\lambda_j &= \max_{\dim(V)=j} \min_{\|v\|=1;v\in V} v^\mathrm{T} (\tilde{A}+\tilde{E})v \\
		& \geq \min_{v \in S} v^\mathrm{T} (\tilde{A}+\tilde{E})v \\
		& \geq \sigma_j + \min_{v \in S} v^\mathrm{T} \tilde{E} v.
\end{align*}
Thus, it suffices to show
$$ \Prob\left( \sup_{v \in S} |v^\mathrm{T} \tilde{E} v| > t \right) \leq \tilde{C}_1 9^j \exp\left( -\tilde{c}_1 \frac{t^\gamma}{2^\gamma} \right) $$
for all $t > 0$.  

Let $\mathcal{N}$ be a $1/4$-net of $S$.  By Lemma \ref{lemma:net}, $|\mathcal{N}| \leq 9^{j}$.  We now claim that 
\begin{equation} \label{eq:supmaxnet}
	T := \sup_{v \in S} | v^\mathrm{T} \tilde{E} v| \leq 2 \max_{ u \in \mathcal{N}} |u^\mathrm{T} \tilde{E} u|. 
\end{equation}
Indeed, fix a realization of $\tilde{E}$.  Since $S$ is compact, there exists $v \in S$ such that $T = |v^\mathrm{T} \tilde{E} v|$.  Moreover, there exists $x \in \mathcal{N}$ such that $\|x - v\| \leq 1/4$.  Clearly the claim is true when $x = v$; assume $x \neq v$.  Then, by the triangle inequality, we have
\begin{align*}
	T &\leq |v^\mathrm{T} \tilde{E} v - v^\mathrm{T} \tilde{E} x| + |v^\mathrm{T} \tilde{E} x - x^\mathrm{T} \tilde{E} x| + |x^\mathrm{T} \tilde{E} x| \\
		&\leq \frac{1}{4} \frac{ |v^\mathrm{T} \tilde{E} (v-x)| }{\| v-x\|} + \frac{1}{4} \frac{ |(v-x)^\mathrm{T} \tilde{E} x |}{\|v-x\| } + \sup_{u \in \mathcal{N}} |u^\mathrm{T} \tilde{E} u| \\
		& \leq \frac{T}{2} + \sup_{u \in \mathcal{N}} |u^\mathrm{T} \tilde{E} u|,
\end{align*}
and \eqref{eq:supmaxnet} follows.  

Applying \eqref{eq:supmaxnet} and Lemma \ref{lemma:tilde}, we have
\begin{align*}
	\Prob \left( \sup_{v \in S} |v^\mathrm{T} \tilde{E} v| > t \right) &\leq \sum_{u \in \mathcal{N}} \Prob \left( |u^\mathrm{T} \tilde{E} u| > \frac{t}{2} \right) \leq 9^j \tilde{C}_1 \exp \left( -\tilde{c}_1 \frac{t^\gamma}{2^\gamma} \right),
\end{align*}
and the proof of the lemma is complete.  
\end{proof}

We will continually make use of the following simple fact:
\begin{equation} \label{eq:aea}
	(\tilde{A} + \tilde{E}) - \tilde{A} = \tilde{E}.
\end{equation}

\section{Proof of Theorems \ref{thm:main}, \ref{thm:subspace}, \ref{thm:general}, and \ref{thm:probweyl}} \label{sec:proof}

This section is devoted to Theorems \ref{thm:main}, \ref{thm:subspace}, \ref{thm:general}, and \ref{thm:probweyl}.  To begin, define the subspace
$$ W:= \mathrm{Span}\{u_1, \ldots, u_r, u_{-1}, \ldots, u_{-r} \}.$$
Let $P$ be the orthogonal projection onto $W^\perp$.  

\begin{lemma} \label{lemma:proj_bound}
Assume that $E$ is $(C_1, c_1, \gamma)$-concentrated for a trio of constants $C_1, c_1, \gamma >0$.  Suppose $A$ has rank $r$, and let $1 \leq j \leq r$ be an integer.  Then 
\begin{equation} \label{eq:suppvi}
	\sup_{1 \leq i \leq j} \|P v_i \| \leq 2 \frac{\|E\|}{\sigma_j}
\end{equation}
with probability at least $1 - \tilde{C}_1 9^j \exp \left( - \tilde{c}_1 \frac{\sigma_j^\gamma}{4^\gamma} \right)$.  
\end{lemma}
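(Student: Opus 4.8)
The plan is to exploit the fact that $W$ is precisely the range of $\tilde{A}$: since $\tilde{A}$ is symmetric with nonzero eigenvalues $\pm\sigma_1,\dots,\pm\sigma_r$ and eigenvectors $u_{\pm 1},\dots,u_{\pm r}$ spanning $W$, the orthogonal complement $W^\perp$ is exactly $\ker\tilde{A}$, so the orthogonal projection $P$ onto $W^\perp$ satisfies $P\tilde{A} = 0$ (and $\tilde{A}P=0$). This is the only place where the low-rank structure of $A$ enters, and it is what allows the "wild action of $E$ outside the range of $A$" to be controlled after projecting.

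First I would fix an index $i$ with $1\le i\le j$ and use the eigenvalue relation $(\tilde{A}+\tilde{E})v_i = \lambda_i v_i$. Applying $P$ on the left and using $P\tilde{A}=0$ gives $P\tilde{E}v_i = \lambda_i P v_i$. Taking norms, and using $\|P\|\le 1$ together with $\|\tilde{E}\| = \|E\|$ from \eqref{eq:normE}, we obtain $|\lambda_i|\,\|Pv_i\| = \|P\tilde{E}v_i\| \le \|E\|$, hence $\|Pv_i\| \le \|E\|/\lambda_i$ whenever $\lambda_i>0$.

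Next I would invoke Lemma \ref{lemma:j-largest}. Since $1\le j\le r$ and $A$ has rank $r$, we have $\sigma_j>0$, and Lemma \ref{lemma:j-largest} gives $\lambda_j \ge \sigma_j/2 > 0$ with probability at least $1 - \tilde{C}_1 9^j \exp\left(-\tilde{c}_1 \frac{\sigma_j^\gamma}{4^\gamma}\right)$. On this event, since $\lambda_1\ge\cdots\ge\lambda_j$, we have $\lambda_i \ge \lambda_j \ge \sigma_j/2$ for every $1\le i\le j$ simultaneously, so the bound from the previous paragraph yields $\|Pv_i\| \le 2\|E\|/\sigma_j$ for all such $i$. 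Taking the supremum over $i$ gives \eqref{eq:suppvi} with exactly the claimed probability.

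I do not expect any serious obstacle: the argument is a one-line manipulation of the eigenvalue equation plus the gap estimate of Lemma \ref{lemma:j-largest}. The only point that requires a little care is the identity $P\tilde{A}=0$, i.e.\ the identification of $W$ with $\operatorname{range}\tilde{A}$ (equivalently $W^\perp = \ker\tilde{A}$); everything else is routine.
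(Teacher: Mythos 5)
Your proof is correct and follows essentially the same route as the paper: both hinge on $P\tilde{A}=0$ (the paper phrases it as $(Pv_i)^{\mathrm{T}}\tilde{A}=0$ and pairs the identity $(\tilde A+\tilde E)-\tilde A=\tilde E$ with $(Pv_i)^{\mathrm{T}}$ and $v_i$, which yields the same inequality $\lambda_i\|Pv_i\|\le\|\tilde E\|$ that you get by applying $P$ to the eigenvalue equation and taking norms), followed by the lower bound $\lambda_i\ge\lambda_j\ge\sigma_j/2$ from Lemma \ref{lemma:j-largest}. No gaps.
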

\begin{proof}
Consider the event 
$$ \Omega_j := \left\{ \lambda_j \geq \frac{1}{2} \sigma_j \right\}. $$
By Lemma \ref{lemma:j-largest} (or Lemma \ref{lemma:largest} in the case $j=1$), $\Omega_j$ holds with probability at least $1 - \tilde{C}_1 9^j \exp \left( - \tilde{c}_1 \frac{\sigma_j^\gamma}{4^\gamma} \right)$.    

Fix $1 \leq i \leq j$.  By multiplying \eqref{eq:aea} on the left by $(P v_i)^\mathrm{T}$ and on the right by $v_i$, we obtain
$$ | \lambda_i (P v_i)^\mathrm{T} v_i | \leq \| P v_i\| \|\tilde{E}\| $$
since $(P v_i)^\mathrm{T} \tilde{A} = 0$.  Thus, on the event $\Omega_j$, we have
$$ \| P v_i \|^2 = |(P v_i)^\mathrm{T} v_i | \leq \frac{1}{\lambda_j}\|P v_i\| \|\tilde{E}\| \leq \frac{2}{\sigma_j} \| P v_i \|\|\tilde{E}\|. $$
We conclude that, on the event $\Omega_j$, 
$$ \sup_{1 \leq i \leq j} \|P v_i \| \leq 2 \frac{\|E\|}{\sigma_j}, $$
and the proof is complete.  
\end{proof}

\begin{lemma} \label{lemma:uproj}
Assume that $E$ is $(C_1, c_1, \gamma)$-concentrated for a trio of constants $C_1, c_1, \gamma >0$.  Suppose $A$ has rank $r$, and let $1 \leq j \leq r$ be an integer.  Define $U_j$ to be the $(m+n) \times (2r-j)$ matrix with columns $u_{j+1}, \ldots, u_r, u_{-1}, \ldots, u_{-r}$.  Then, for any $t>0$,
\begin{equation} \label{eq:sujtv}
	\sup_{1 \leq i \leq j} \| U_j^\mathrm{T} v_i \| \leq 4 \left( \frac{t r^{1/\gamma}}{\delta_j} + \frac{\|E\|^2}{\delta_j \sigma_j} \right)
\end{equation}
with probability at least 
$$ 1 - 2\tilde{C}_1 9^j \exp \left( - \tilde{c}_1 \frac{\delta_j^\gamma}{4^\gamma}\right) - \tilde{C}_1 9^{2r} \exp \left( - \tilde{c}_1r \frac{t^\gamma}{2^\gamma} \right). $$
\end{lemma}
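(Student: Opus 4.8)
The plan is to bound each $\|U_j^\mathrm{T} v_i\|$ by expressing it coordinate-wise: since the columns $u_{j+1},\ldots,u_r,u_{-1},\ldots,u_{-r}$ of $U_j$ are orthonormal eigenvectors of $\tilde{A}$ lying in $W$, we have $\|U_j^\mathrm{T} v_i\|^2 = \sum_k (u_k \cdot v_i)^2$ where $k$ ranges over $\{j+1,\ldots,r\}\cup\{-1,\ldots,-r\}$, plus a contribution from $W^\perp$ which is exactly $\|Pv_i\|^2$ and is already controlled by Lemma \ref{lemma:proj_bound}. So it suffices to bound $|u_k \cdot v_i|$ for each such index $k$. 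First I would reproduce the basic identity from the overview: multiplying \eqref{eq:aea} on the left by $u_k^\mathrm{T}$ and on the right by $v_i$ gives $u_k^\mathrm{T}(\tilde{A}+\tilde{E})v_i - u_k^\mathrm{T}\tilde{A}v_i = u_k^\mathrm{T}\tilde{E}v_i$, and since $(\tilde{A}+\tilde{E})v_i = \lambda_i v_i$ and $u_k^\mathrm{T}\tilde{A} = \sigma_k u_k^\mathrm{T}$ (with $\sigma_k$ interpreted as $-\sigma_{|k|}$ for negative $k$), this yields $|\lambda_i - \sigma_k|\,|u_k\cdot v_i| \le |u_k^\mathrm{T}\tilde{E}v_i|$.

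The next step is to produce the two ingredients: a lower bound on the gap $|\lambda_i - \sigma_k|$ and an upper bound on $|u_k^\mathrm{T}\tilde{E}v_i|$, summed over $k$. For the gap, I would invoke Lemma \ref{lemma:j-largest}: on the event where $\lambda_i \ge \lambda_j \ge \sigma_j - \delta_j/2$ for all $i\le j$, we get $\lambda_i - \sigma_k \ge \sigma_j - \delta_j/2 - \sigma_{j+1} = \delta_j/2$ for $k \in \{j+1,\ldots,r\}$, and for negative $k$ we have $\lambda_i - \sigma_k = \lambda_i + \sigma_{|k|} \ge \sigma_j/2 \ge \delta_j/2$ (using $\lambda_i \ge \sigma_j/2 \ge 0$), so uniformly $|\lambda_i - \sigma_k| \ge \delta_j/2$; this costs probability $\tilde C_1 9^j \exp(-\tilde c_1 \delta_j^\gamma/4^\gamma)$ (and a similar term from the $\sigma_j/2$ bound, hence the factor $2$ in the claimed probability). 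For the numerator, the trick — as the overview hints — is not to bound each $u_k^\mathrm{T}\tilde{E}v_i$ individually but to decompose $v_i = \sum_\ell (u_\ell\cdot v_i) u_\ell + Pv_i$ over all of $W$ and $W^\perp$, writing $\sum_{i\le j}\|U_j^\mathrm{T}v_i\|^2$ in terms of $\|U^\mathrm{T}\tilde{E}U\|$ (controlled by Lemma \ref{lemma:r-norm} with the $9^{2r}$ and $tr^{1/\gamma}$ terms) and $\sup_i\|Pv_i\|$ (controlled by Lemma \ref{lemma:proj_bound}, contributing the $\|E\|^2/(\delta_j\sigma_j)$ piece via $\|\tilde E\| \cdot \|Pv_i\| / (\delta_j/2)$).

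More precisely, on the good event I would write, for each $i \le j$, $\|U_j^\mathrm{T} v_i\| \le \frac{2}{\delta_j}\|U_j^\mathrm{T}\tilde{E}v_i\|$, and then split $\tilde E v_i$ as $\tilde E U U^\mathrm{T} v_i + \tilde E(I - UU^\mathrm{T})v_i = \tilde E U U^\mathrm{T} v_i + \tilde E P v_i$ (since $I - UU^\mathrm{T}$ is the projection onto $W^\perp$, i.e.\ $P$). Then $\|U_j^\mathrm{T}\tilde{E}v_i\| \le \|U_j^\mathrm{T}\tilde{E}U\|\,\|U^\mathrm{T}v_i\| + \|U_j^\mathrm{T}\tilde{E}Pv_i\| \le \|U^\mathrm{T}\tilde{E}U\| + \|\tilde{E}\|\,\|Pv_i\|$, using $\|U^\mathrm{T}v_i\|\le\|v_i\|=1$ and $\|U_j^\mathrm{T}\tilde{E}U\| \le \|U^\mathrm{T}\tilde{E}U\|$. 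On the event of Lemma \ref{lemma:r-norm} this is at most $tr^{1/\gamma}$, and on the event of Lemma \ref{lemma:proj_bound} the second term is at most $2\|E\|^2/\sigma_j$; hence $\|U_j^\mathrm{T}v_i\| \le \frac{2}{\delta_j}\big(tr^{1/\gamma} + 2\|E\|^2/\sigma_j\big) \le 4\big(\frac{tr^{1/\gamma}}{\delta_j} + \frac{\|E\|^2}{\delta_j\sigma_j}\big)$, uniformly in $i$. A union bound over the three events (Lemma \ref{lemma:j-largest} used twice — for the $\sigma_j/2$ and $\delta_j/2$ estimates — and Lemmas \ref{lemma:r-norm}, \ref{lemma:proj_bound}, noting Lemma \ref{lemma:proj_bound}'s event is implied by Lemma \ref{lemma:j-largest}'s $\sigma_j/2$ event) gives the stated failure probability. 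The main obstacle is organizing the decomposition so that the $W^\perp$ part of $v_i$ is routed through the $\|E\|/\sigma_j$ estimate of Lemma \ref{lemma:proj_bound} rather than through the coarse $\|E\|$ bound — that is what produces the extra factor of $\|E\|/\sigma_j$ and hence the improvement over Davis–Kahan–Wedin; getting the constants and the interplay of events exactly right (and handling the $j=1$ case where Lemma \ref{lemma:largest} replaces Lemma \ref{lemma:j-largest}) is the only delicate bookkeeping.
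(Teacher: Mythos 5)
Your proof is correct and follows essentially the same route as the paper: the identity $(\lambda_i I - D_j)U_j^\mathrm{T}v_i = U_j^\mathrm{T}\tilde{E}v_i$, the decomposition $v_i = UU^\mathrm{T}v_i + Pv_i$ routing the $W^\perp$ part through Lemma \ref{lemma:proj_bound}, the uniform gap bound $\delta_j/2$ (you even justify the negative-eigenvalue gaps more explicitly than the paper does), and the same union bound giving the factor $2$. The only blemish is the opening claim that $\|U_j^\mathrm{T}v_i\|^2$ has an additional ``contribution from $W^\perp$'' equal to $\|Pv_i\|^2$ --- it does not, since $U_j^\mathrm{T}v_i$ records only the coordinates along the columns of $U_j$ --- but this framing sentence plays no role in the actual argument, which is sound.
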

\begin{proof}
Define the event
\begin{align*}
\Omega_j &:= \left\{ \sup_{1 \leq i \leq j} \|P v_i \| \leq 2 \frac{\|E\|}{\sigma_j} \right\} \bigcap \left\{ \| U^\mathrm{T} \tilde{E} U \| \leq t r^{1/\gamma} \right\} \bigcap \left\{  \lambda_j - \sigma_{j+1} \geq \frac{\delta_j}{2} \right\}. 
\end{align*}
By Lemmas \ref{lemma:r-norm}, \ref{lemma:j-largest}, and \ref{lemma:proj_bound}, it follows that
$$ \Prob( \Omega_j ) \geq 1 - 2\tilde{C}_1 9^j \exp \left( - \tilde{c}_1 \frac{\delta_j^\gamma}{4^\gamma}\right) - \tilde{C}_1 9^{2r} \exp \left( - \tilde{c}_1r \frac{t^\gamma}{2^\gamma} \right). $$

Fix $1 \leq i \leq j$.  We multiply \eqref{eq:aea} on the left by $U_j^\mathrm{T}$ and on the right by $v_i$ to obtain
\begin{equation} \label{eq:ujae}
	U_j^\mathrm{T} (\tilde{A} + \tilde{E}) v_i - U_j^\mathrm{T} \tilde{A} v_i = U_j^\mathrm{T} \tilde{E} v_i. 
\end{equation}
We note that
$$ U_j^\mathrm{T} (\tilde{A} + \tilde{E}) v_i = \lambda_i U_j^\mathrm{T} v_i $$
and
$$ U_j^\mathrm{T} \tilde{A} v_i = D_j U_j^\mathrm{T} v_i, $$
where $D_j$ is the diagonal matrix with the values $\sigma_{j+1}, \ldots, \sigma_r, -\sigma_{1}, \ldots, -\sigma_{r}$ on the diagonal.  

For the right-hand side of \eqref{eq:ujae}, we write $v_i = U U^\mathrm{T} v_i + P v_i$, where $U$ is the matrix with columns $u_1, \ldots, u_r, u_{-1}, \ldots, u_{-r}$ and $P$ is the orthogonal projection onto $W^\perp$.  Thus, on the event $\Omega_j$, we have
\begin{align*}
	\|U_j^\mathrm{T} \tilde{E} v_i\| \leq \|U_j^\mathrm{T} \tilde{E} U\| + \|\tilde{E}\| \|P v_i\| \leq t r^{1/\gamma} + 2 \frac{\|E\|^2}{\sigma_j}.
\end{align*}
Here we used the fact that $U_j^\mathrm{T} \tilde{E} U$ is a sub-matrix of $U^\mathrm{T} \tilde{E} U$ and hence 
$$ \|U_j^\mathrm{T} \tilde{E} U\| \leq \| U^\mathrm{T} \tilde{E} U\|. $$

Combining the above computations and bound yields
$$ \| (\lambda_i I - D_j) U_j^\mathrm{T} v_i \| \leq 2 \left( t r^{1/\gamma} + \frac{\|E\|^2}{\sigma_j} \right) $$
on the event $\Omega_j$.    

We now consider the entries of the diagonal matrix $\lambda_i I - D_j$.  On $\Omega_j$, we have that, for any $k \geq j+1$,
$$ \lambda_i - \sigma_k \geq \lambda_j - \sigma_{j+1} \geq \frac{\delta_j}{2}. $$
By writing the elements of the vector $U_j^\mathrm{T} v_i$ in component form, it follows that
$$ \|(\lambda_i I - D_j) U_j^\mathrm{T} v_i \| \geq \frac{\delta_j}{2} \| U_j^\mathrm{T} v_i \| $$
and hence
$$  \| U_j^\mathrm{T} v_i \| \leq 4 \left( \frac{t r^{1/\gamma}}{\delta_j} + \frac{\|E\|^2}{\sigma_j \delta_j} \right) $$
on the event $\Omega_j$.  Since this holds for each $1 \leq i \leq j$, the proof is complete.  
\end{proof}

With Lemmas \ref{lemma:proj_bound} and \ref{lemma:uproj} in hand, we now prove Theorems \ref{thm:main}, \ref{thm:subspace}, \ref{thm:general}, and \ref{thm:probweyl}.  By Proposition \ref{prop:sine}, in order to prove Theorems \ref{thm:main} and \ref{thm:general}, it suffices to bound $\sin \angle (u_j, v_j)$  because $u_j, v_j$ are formed from the left and right singular vectors of $A$ and $A+E$.  

\begin{proof}[Proof of Theorem \ref{thm:main}]
We write 
$$ v_1 = \sum_{k=1}^r \alpha_k u_k + \sum_{k=1}^r \alpha_{-k} u_{-k} + P v_1, $$  
where $P$ is the orthogonal projection onto $W^\perp$.  Then
$$ \sin^2 \angle (u_1, v_1) = 1- \cos^2 \angle (u_1, v_1) = \sum_{k=2}^r |\alpha_k|^2 + \sum_{k=1}^r |\alpha_{-k}|^2 + \| P v_1 \|^2. $$
Applying the bounds obtained from Lemmas \ref{lemma:proj_bound} and \ref{lemma:uproj} (with $j=1$), we obtain
$$ \sin^2 \angle (u_1, v_1) \leq 16 \left( \frac{t r^{1/\gamma}}{\delta} + \frac{\|E\|^2}{\sigma_1 \delta} \right)^2 + 4 \frac{\|E\|^2}{ \sigma_1^2} $$
with probability at least 
\begin{equation} \label{eq:probholdmain}
	1 - 27 \tilde{C}_1 \exp \left( -\tilde{c}_1 \frac{\delta^\gamma}{4^\gamma} \right) - \tilde{C}_1 9^{2r} \exp \left(- \tilde{c}_1 r \frac{t^\gamma}{2^\gamma} \right). 
\end{equation}
We now note that
\begin{align*}
	16 \left( \frac{t r^{1/\gamma}}{\delta} + \frac{\|E\|^2}{\sigma_1 \delta} \right)^2 + 4 \frac{\|E\|^2}{ \sigma_1^2}  &\leq 16  \left( \frac{t r^{1/\gamma}}{\delta} + \frac{\|E\|^2}{\sigma_1 \delta} + \frac{\|E\|}{ \sigma_1} \right)^2.
\end{align*}
The correct absolute constant in front can now be deduced from the bound above and Proposition \ref{prop:sine}.  The lower bound on the probability given in \eqref{eq:probholdmain} can be written in terms of the constants $C_1, c_1, \gamma$ by recalling the definitions of $\tilde{C}_1$ and $\tilde{c}_1$ given in Lemma \ref{lemma:tilde}.   
\end{proof}

\begin{proof}[Proof of Theorem \ref{thm:general}]
We again write 
\begin{equation} \label{eq:vjproj}
	v_j = \sum_{k=1}^r \alpha_k u_k + \sum_{k=1}^r \alpha_{-k} u_{-k} + P v_j, 
\end{equation}
where $P$ is the orthogonal projection onto $W^\perp$.  Then we have that
\begin{align*}
	\sin^2 \angle (u_j, v_j) &= 1- \cos^2 \angle (u_j, v_j) \\
		&= \sum_{k=1}^{j-1} |\alpha_k|^2 + \sum_{k=j+1}^r |\alpha_k|^2 + \sum_{k=1}^r |\alpha_{-k}|^2 + \|P v_j\|^2. 
\end{align*}
For any $1 \leq k \leq j-1$, we have that
$$ |\alpha_k|^2 = | v_j \cdot (u_k - v_k) |^2 \leq \|v_k - u_k\|^2 \leq 2 (1 - \cos \angle (v_k,u_k)) \leq 2 \sin^2 \angle(v_k,u_k). $$
Moreover, from Lemmas \ref{lemma:proj_bound} and \ref{lemma:uproj}, we have
$$ \sum_{k=j+1}^r |\alpha_k|^2 + \sum_{k=1}^r |\alpha_{-k}|^2 \leq 16 \left( \frac{t r^{1/\gamma}}{\delta_j} + \frac{\|E\|^2}{\sigma_j \delta_j} \right)^2 $$
with probability at least 
$$ 1 - 2\tilde{C}_1 9^j \exp \left( - \tilde{c}_1 \frac{\delta_j^\gamma}{4^\gamma}\right) - \tilde{C}_1 9^{2r} \exp \left( - \tilde{c}_1r \frac{t^\gamma}{2^\gamma} \right). $$ 
and
$$ \| P v_j \|^2 \leq 4  \frac{\|E\|^2}{\sigma_j^2} $$
with probability at least $1 - \tilde{C}_1 9^j \exp \left( - \tilde{c}_1 \frac{\sigma_j^\gamma}{4^\gamma} \right)$.  The proof of Theorem \ref{thm:general} is complete by combining the bounds above\footnote{Here the bounds are given in terms of $\sin^2 \angle(v_k, u_k)$ for $1 \leq k \leq j-1$.  However, $u_k$ and $v_k$ are formed from the left and right singular vectors of $A$ and $A+E$.  To avoid the dependence on both the left and right singular vectors, one can begin with \eqref{eq:vjproj} and consider only the coordinates of $v_j$ which correspond to the left (alternatively right) singular vectors.  By then following the proof for only these coordinates, one can bound the left (right) singular vectors by terms which only depend on the previous left (right) singular vectors.}.  As in the proof of Theorem \ref{thm:main}, the correct constant factor in front can be deduced from Proposition \ref{prop:sine}.  
\end{proof}

\begin{proof}[Proof of Theorem \ref{thm:subspace}]
Define the subspaces
$$ \tilde{U}:= \mathrm{Span}\{u_1, \ldots, u_j \} \quad \text{and} \quad \tilde{V}:= \mathrm{Span}\{v_1, \ldots, v_j\}. $$
By Proposition \ref{prop:sine}, it suffices to bound $\sin \angle(\tilde{U}, \tilde{V})$.  

Let $Q$ be the orthogonal projection onto $\tilde{U}^\perp$.  By Lemmas \ref{lemma:proj_bound} and \ref{lemma:uproj}, it follows that
\begin{equation} \label{eq:supqvi}
	\sup_{1 \leq i \leq j} \|Q v_i \| \leq 4 \left( \frac{t r^{1/\gamma} }{\delta_j} + \frac{\|E\|^2}{\sigma_j \delta_j} + \frac{\|E\|}{\sigma_j} \right) 
\end{equation}
with probability at least 
$$ 1 - 3\tilde{C}_1 9^j \exp \left( - \tilde{c}_1 \frac{\delta_j^\gamma}{4^\gamma}\right) - \tilde{C}_1 9^{2r} \exp \left( - \tilde{c}_1r \frac{t^\gamma}{2^\gamma} \right). $$
On the event where \eqref{eq:supqvi} holds, we have
$$ \sup_{v \in \tilde{V}, \|v\| = 1} \| Q v \| \leq 4 \sqrt{j} \left( \frac{t r^{1/\gamma} }{\delta_j} + \frac{\|E\|^2}{\sigma_j \delta_j} + \frac{\|E\|}{\sigma_j} \right)  $$
by the triangle inequality and the Cauchy-Schwarz inequality.  Thus, by \eqref{eq:ssad}, we conclude that
\begin{align*}
	\sin \angle(\tilde{U}, \tilde{V}) &\leq 4 \sqrt{j} \left( \frac{t r^{1/\gamma} }{\delta_j} + \frac{\|E\|^2}{\sigma_j \delta_j} + \frac{\|E\|}{\sigma_j} \right)
\end{align*}
on the event where \eqref{eq:supqvi} holds.  The claim now follows from Proposition \ref{prop:sine}.  
\end{proof}

\begin{proof}[Proof of Theorem \ref{thm:probweyl}]
The lower bound \eqref{eq:probweylbndlower} follows from Lemma \ref{lemma:j-largest}; it remains to prove \eqref{eq:probweylbndupper}.  Let $U$ be the $(m+n) \times 2r$ matrix whose columns are given by the vectors $u_1, \ldots, u_r, u_{-1}, \ldots, u_{-r}$, and recall that $P$ is the orthogonal projection onto $W^\perp$.  

Let $S$ denote the unit sphere in $\mathrm{Span}\{v_1, \ldots, v_j\}$.  Then for $1 \leq i \leq j$, we multiply \eqref{eq:aea} on the left by $v_i^\mathrm{T} P$ and on the right by $v_i$ to obtain
$$ \lambda_i \|P v_i \|^2 \leq \| v_i^\mathrm{T} P \tilde{E} v_i \| \leq \|P v_i \| \|E\|. $$
Here we used \eqref{eq:normE} and the fact that $P \tilde{A} = 0$.  Therefore, we have the deterministic bound
$$ \sup_{1 \leq i \leq j} \| P v_i \| \leq \frac{ \| E\|}{\lambda_j}. $$
By the Cauchy-Schwarz inequality, it follows that
\begin{equation} \label{eq:detsupbnd}
	\sup_{v \in S} \| P v \| \leq \sqrt{j} \frac{ \|E \| }{\lambda_j}. 
\end{equation}

By the Courant minimax principle, we have
\begin{align*}
	\sigma_j = \max_{\dim(V)=j} \min_{v\in V,\|v\|=1} v^\mathrm{T} \tilde{A}v \geq \min_{v \in S} v^\mathrm{T} \tilde{A} v \geq \lambda_j - \max_{v \in S} |v^\mathrm{T} \tilde{E} v|. 
\end{align*}
Thus, it suffices to show that
\begin{equation*} 
	\max_{v \in S} |v^\mathrm{T} \tilde{E} v| \leq t r^{1/\gamma} + 2\sqrt{j} \frac{ \|E\|^2}{\lambda_j} + j \frac{\|E\|^3}{{\lambda_j}^2} 
\end{equation*}
with probability at least $1-\tilde{C}_1 9^{2r} \exp \left( - \tilde{c}_1 r \frac{t^\gamma}{2^\gamma} \right)$.  

We decompose $v = Pv + U U^\mathrm{T} v$ and obtain
\begin{align*}
	 \max_{v \in S} |v^\mathrm{T} \tilde{E} v| \leq \max_{v \in S} \|Pv\|^2 \|\tilde{E}\| + 2 \max_{v \in S} \| Pv \| \|\tilde{E}\| + \| U^\mathrm{T} \tilde{E} U \|.
\end{align*}
Thus, by Lemma \ref{lemma:r-norm} and \eqref{eq:detsupbnd}, we have
$$ \max_{v \in S} |v^\mathrm{T} \tilde{E} v| \leq j \frac{ \|E\|^3}{\lambda_j^2} + 2 \sqrt{j} \frac{ \|E\|^2}{\lambda_j} + t r^{1/\gamma} $$
with probability at least $1-\tilde{C}_1 9^{2r} \exp \left( - \tilde{c}_1 r \frac{t^\gamma}{2^\gamma} \right)$, and the proof is complete.   
\end{proof}

\section{The concentration property} \label{sec:concentration}

In this section, we give examples of random matrix models satisfying  Definition \ref{def:concentration}.  

\begin{lemma} \label{lemma:bernoulli}
There exists a constant $C_1$ such that the following holds.  Let $E$ be a random $n \times n$ Bernoulli matrix.  Then 
$$ \Prob ( \|E\| > 3 \sqrt{n} ) \leq \exp(-C_1 n), $$
and for any fixed unit vectors $u,v$ and positive number $t$,
$$ \Prob (|u^\mathrm{T} E v| \geq t ) \leq 2 \exp(-t^2/2). $$
\end{lemma}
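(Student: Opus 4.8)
The plan is to prove the two assertions of Lemma~\ref{lemma:bernoulli} separately, since they concern rather different quantities. For the bound on $\|E\|$, I would cite the now-standard fact (see \cite{V, Vnorm, BS}) that for an $n \times n$ matrix with i.i.d.\ mean-zero, unit-variance, sub-Gaussian entries, the operator norm concentrates around $2\sqrt{n}$ with exponentially small deviation probabilities; in particular $\Prob(\|E\| > 3\sqrt{n}) \leq \exp(-C_1 n)$ for a suitable absolute constant $C_1 > 0$. One clean self-contained route is a net argument: discretize the unit sphere $S^{n-1}$ by a $1/2$-net $\mathcal{N}$ of size at most $5^n$ (Lemma~\ref{lemma:net}), use $\|E\| \leq 2\max_{x,y \in \mathcal{N}} x^\mathrm{T} E y$, bound each $\Prob(x^\mathrm{T} E y > \tfrac{3}{2}\sqrt{n})$ via the second assertion of the lemma (with $t = \tfrac32\sqrt n$, giving $2\exp(-9n/8)$), and take a union bound over the at most $25^n$ pairs. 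Since $25^n \cdot 2\exp(-9n/8) = 2\exp((\ln 25 - 9/8)n)$ and $\ln 25 \approx 3.22 > 9/8$, this crude version fails; so instead I would either use a finer net (an $\varepsilon$-net with $\varepsilon$ small enough that $\ln(1+2/\varepsilon)^2 < t^2/(2 \cdot (1-\varepsilon)^2 \cdot ...)$ after rescaling $t$ appropriately) or simply invoke the external sharp result. The cleanest exposition is to quote \cite{Vnorm}; I will do that and keep the proof focused on the concentration inequality.

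For the second assertion, fix unit vectors $u, v \in \mathbb{R}^n$ and write $X := u^\mathrm{T} E v = \sum_{i,j} u_i v_j \xi_{ij}$, where the $\xi_{ij}$ are i.i.d.\ Rademacher ($\pm 1$ with probability $1/2$). This is a linear combination of independent Rademacher variables with coefficients $a_{ij} := u_i v_j$, and $\sum_{i,j} a_{ij}^2 = \left(\sum_i u_i^2\right)\left(\sum_j v_j^2\right) = \|u\|^2 \|v\|^2 = 1$. The standard Hoeffding/Khintchine bound for Rademacher sums gives, for any $\lambda \in \mathbb{R}$,
\begin{equation*}
	\E \exp(\lambda X) = \prod_{i,j} \cosh(\lambda a_{ij}) \leq \prod_{i,j} \exp\left( \tfrac{1}{2}\lambda^2 a_{ij}^2 \right) = \exp\left( \tfrac{1}{2}\lambda^2 \right),
\end{equation*}
using the elementary inequality $\cosh(x) \leq e^{x^2/2}$ valid for all real $x$. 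A Chernoff bound then yields $\Prob(X \geq t) \leq \exp(\tfrac12\lambda^2 - \lambda t)$; optimizing over $\lambda > 0$ with $\lambda = t$ gives $\Prob(X \geq t) \leq \exp(-t^2/2)$. Applying the same argument to $-X = u^\mathrm{T} E(-v)$ and taking a union bound gives $\Prob(|X| \geq t) \leq 2\exp(-t^2/2)$, as claimed.

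The only genuine obstacle is the operator-norm bound: getting the constant $3$ (rather than some larger absolute constant) from a bare-hands net argument requires a slightly more careful choice of net radius and truncation, or an $\epsilon$-net argument with $\epsilon$ optimized, and is exactly the content of \cite{Vnorm}. Everything else — the Rademacher moment generating function computation, the Chernoff optimization, and the normalization $\sum a_{ij}^2 = 1$ — is routine. I would therefore structure the write-up as: (i) a one-line reduction of the norm bound to a citation of \cite{Vnorm} (noting that Rademacher entries are sub-Gaussian so the hypotheses apply); (ii) the Rademacher-sum concentration computation above for $|u^\mathrm{T} E v|$, which also re-confirms that $E$ is $(2, \tfrac12, 2)$-concentrated in the sense of Definition~\ref{def:concentration}. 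This also makes transparent why $\gamma = 2$ here, which is what feeds into Theorems~\ref{theorem:main1} and~\ref{thm:probweyl0}.
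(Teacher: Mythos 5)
Your proposal is correct and matches what the paper does: the paper likewise disposes of the operator-norm bound by citation (to \cite{V}, \cite{Vnorm}, noting a net argument as an alternative) and obtains the concentration bound from a Hoeffding-type inequality for sums of independent bounded variables, which is exactly your Rademacher moment-generating-function computation (compare the paper's own Lemma \ref{lemma:conc-nonsym} with $K=1$). Your honest observation that a crude $1/2$-net union bound cannot recover the constant $3$ is accurate, and deferring that point to \cite{Vnorm} is what the authors do as well.
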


The bounds in Lemma \ref{lemma:bernoulli} also hold for the case where the noise is Gaussian (instead of Bernoulli).  Indeed, when the entries of $E$ are iid standard normal random variables, $u^{\mathrm{T}} E v$ has the standard normal distribution. The first bound is a corollary of a general concentration result from \cite{V}. It can also be proved directly using a net argument. The second bound follows from Azuma's inequality \cite{Azuma, H, Ste}; see also \cite{V} for a direct proof with a more generous constant.

We now verify the $(C_1,c_1,\gamma)$-concentration property for slightly more general random matrix models.  We will discuss these matrix models further in Section \ref{section:app}.  In the lemmas below, we consider both the case where $E$ is a real symmetric random matrix with independent entries and when $E$ is a non-symmetric random matrix with independent entries.  

\begin{lemma} \label{lemma:conc-sym}
Let $E = (\xi_{ij})_{i,j=1}^n$ be a $n \times n$ real symmetric random matrix where 
$$ \{ \xi_{ij} : 1 \leq i \leq j \leq n\} $$ 
is a collection of independent random variables each with mean zero.  Further assume
$$ \sup_{1 \leq i \leq j \leq n} |\xi_{ij}| \leq K $$  
with probability $1$, for some $K \geq 1$.  Then for any fixed unit vectors $u,v$ and every $t > 0$
$$ \Prob (|u^\mathrm{T} E v| \geq t) \leq 2 \exp \left( \frac{-t^2}{8 K^2 } \right). $$
\end{lemma}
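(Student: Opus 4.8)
\textbf{Proof proposal for Lemma \ref{lemma:conc-sym}.}

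The plan is to fix unit vectors $u = (u_i)$ and $v = (v_i)$, write out the bilinear form $u^\mathrm{T} E v = \sum_{i,j} u_i v_j \xi_{ij}$, and group the terms according to the independent variables $\xi_{ij}$ with $i \leq j$. Using symmetry $\xi_{ij} = \xi_{ji}$, the diagonal contributes $\sum_i u_i v_i \xi_{ii}$ and the off-diagonal contributes $\sum_{i < j} (u_i v_j + u_j v_i) \xi_{ij}$. Thus $u^\mathrm{T} E v = \sum_{i \leq j} c_{ij} \xi_{ij}$ where $c_{ii} = u_i v_i$ and $c_{ij} = u_i v_j + u_j v_i$ for $i < j$, a sum of independent, mean-zero, bounded random variables. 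The natural tool is a Hoeffding-type / Azuma-type bound (via the martingale difference sequence referenced after Lemma \ref{lemma:bernoulli}, or equivalently Hoeffding's inequality): since $|c_{ij} \xi_{ij}| \leq |c_{ij}| K$, we get
$$ \Prob\left( |u^\mathrm{T} E v| \geq t \right) \leq 2 \exp\left( \frac{-t^2}{2 K^2 \sum_{i \leq j} c_{ij}^2} \right). $$

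The remaining step is to bound $\sum_{i \leq j} c_{ij}^2$. I expect $\sum_{i \leq j} c_{ij}^2 \leq 4$, which then yields the claimed constant $8$ in the exponent. To see this: $\sum_{i<j}(u_iv_j+u_jv_i)^2 \le \sum_{i<j} 2(u_i^2 v_j^2 + u_j^2 v_i^2) = 2\sum_{i \neq j} u_i^2 v_j^2 \le 2(\sum_i u_i^2)(\sum_j v_j^2) = 2$, using $2(a+b)^2 \le 4(a^2+b^2)$ wait — more carefully $(a+b)^2 \le 2(a^2+b^2)$, so $\sum_{i<j}(u_iv_j+u_jv_i)^2 \le 2\sum_{i<j}(u_i^2v_j^2 + u_j^2 v_i^2) = 2\sum_{i\neq j} u_i^2 v_j^2 \le 2$; and the diagonal gives $\sum_i u_i^2 v_i^2 \le (\sum_i u_i^2)(\sum_j v_j^2) = 1$ (or more simply $\le \max_i v_i^2 \le 1$). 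Hence $\sum_{i \leq j} c_{ij}^2 \leq 3 \leq 4$, and substituting $\sum_{i\le j} c_{ij}^2 \le 4$ into the Hoeffding bound gives the exponent $-t^2/(8K^2)$.

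The only mildly delicate point is the bookkeeping for the diagonal versus off-diagonal terms and making sure the symmetrization does not introduce a spurious factor of $2$; this is the step I would be most careful with, but it is entirely routine. Everything else is a direct invocation of Hoeffding's inequality for weighted sums of independent bounded random variables, so there is no real obstacle here.
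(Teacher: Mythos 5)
Your proposal is correct and follows essentially the same route as the paper: the same decomposition of $u^\mathrm{T} E v$ into independent diagonal and off-diagonal contributions, followed by Hoeffding's inequality with the bound $\sum_{i \leq j} c_{ij}^2 \leq 4$ on the squared coefficients. The only (immaterial) difference is that your coefficient bound is slightly sharper ($\leq 3$) than the paper's direct estimate by $4\sum_{i,j} u_i^2 v_j^2 \leq 4$.
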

\begin{proof}
We write
$$ u^\mathrm{T} E v = \sum_{1 \leq i < j \leq n} (u_i v_j + v_i u_j) \xi_{ij} + \sum_{i=1}^n u_i v_i \xi_{ii}. $$
As the right side is a sum of independent, bounded random variables, we apply Hoeffding's inequality (\cite[Theorem 2]{H}) to obtain 
$$ \Prob (|u^\mathrm{T} E v - \E u^\mathrm{T} E v| \geq t) \leq 2 \exp \left( \frac{-t^2}{8 K^2 } \right). $$
Here we used the fact that
$$ \sum_{1 \leq i < j \leq n} (|u_i| |v_j| + |v_i| |u_j|)^2 + \sum_{i=1}^n |u_i|^2 |v_i|^2 \leq 4 \sum_{i,j=1}^n |u_i|^2 |v_j|^2 \leq 4 $$
because $u,v$ are unit vectors.  Since each $\xi_{ij}$ has mean zero, it follows that $\E u^\mathrm{T} E v = 0$, and the proof is complete.  
\end{proof}

\begin{lemma} \label{lemma:conc-nonsym}
Let $E = (\xi_{ij})_{1 \leq i \leq m, 1 \leq j \leq n}$ be a $m \times n$ real random matrix where
$$ \{ \xi_{ij} : 1 \leq i \leq m, 1 \leq j \leq n \} $$
is a collection of independent random variables each with mean zero.  Further assume
$$ \sup_{1 \leq i \leq m, 1 \leq j \leq n} |\xi_{ij}| \leq K $$
with probability $1$, for some $K \geq 1$.  Then for any fixed unit vectors $u \in \mathbb{R}^m, v \in \mathbb{R}^n$, and every $t > 0$
\begin{equation} \label{eq:concprop-nonsym}
	\Prob( |u^\mathrm{T} E v| \geq t) \leq 2 \exp \left( \frac{-t^2}{2 K^2} \right). 
\end{equation}
\end{lemma}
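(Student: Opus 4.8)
The plan is to mimic the proof of Lemma \ref{lemma:conc-sym}, but the non-symmetric setting is actually cleaner since there is no symmetry constraint to bookkeep. First I would expand the bilinear form in the independent entries:
$$ u^\mathrm{T} E v = \sum_{i=1}^m \sum_{j=1}^n u_i v_j \, \xi_{ij}, $$
which exhibits $u^\mathrm{T} E v$ as a sum of the independent random variables $X_{ij} := u_i v_j \xi_{ij}$, one for each pair $(i,j)$.

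Next I would record the boundedness: since $|\xi_{ij}| \leq K$ almost surely, each $X_{ij}$ lies in the interval $[-K|u_i v_j|, K|u_i v_j|]$, so the length of its range is $2K|u_i v_j|$. Summing the squares of these ranges gives
$$ \sum_{i=1}^m \sum_{j=1}^n \left( 2 K |u_i v_j| \right)^2 = 4K^2 \left( \sum_{i=1}^m u_i^2 \right) \left( \sum_{j=1}^n v_j^2 \right) = 4 K^2, $$
where the last equality uses that $u$ and $v$ are unit vectors.

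Then I would invoke Hoeffding's inequality (\cite[Theorem 2]{H}) for the sum of independent bounded variables $\sum_{i,j} X_{ij}$, which yields
$$ \Prob\left( \left| u^\mathrm{T} E v - \E\, u^\mathrm{T} E v \right| \geq t \right) \leq 2 \exp\left( \frac{-2 t^2}{4 K^2} \right) = 2 \exp\left( \frac{-t^2}{2 K^2} \right). $$
Finally, since each $\xi_{ij}$ has mean zero and is independent, $\E\, u^\mathrm{T} E v = 0$, and \eqref{eq:concprop-nonsym} follows. There is no real obstacle here; the only point to be careful about is the normalization constant in the exponent, i.e.\ that $\sum_{i,j} u_i^2 v_j^2 = 1$ rather than something larger, which is exactly where the unit-vector hypothesis enters. (One could also note in passing that this gives the $(2, \tfrac{1}{2K^2}, 2)$-concentration property, consistent with Lemma \ref{lemma:bernoulli} when $K = 1$.)
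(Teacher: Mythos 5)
Your proof is correct and takes exactly the route the paper indicates (and spells out in the symmetric case, Lemma \ref{lemma:conc-sym}): expand $u^\mathrm{T} E v$ as a sum of independent bounded variables and apply Hoeffding's inequality, with the unit-vector normalization giving $\sum_{i,j} u_i^2 v_j^2 = 1$. The paper omits these details precisely because they are this routine, so nothing is missing.
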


The proof of Lemma \ref{lemma:conc-nonsym} is nearly identical to the proof of lemma \ref{lemma:conc-sym}.  Indeed, \eqref{eq:concprop-nonsym} follows from Hoeffding's inequality since $u^\mathrm{T}E v$ can be the written as the sum of independent random variables; we omit the details.  

Many other models of random matrices satisfy Definition \ref{def:concentration}.  If the entries of $E$ are independent and have a rapidly decaying tail, then $E$ will be $(C_1,c_1,\gamma)$-concentrated for some constants $C_1,c_1,\gamma>0$.  One can achieve this by standard truncation arguments.  
For many arguments of this type, see for instance \cite{VW}.  As an example, we present a concentration result from \cite{RV} when the entries of $E$ are iid sub-exponential random variables.

\begin{lemma}[Proposition 5.16 of \cite{RV}] \label{lemma:conc-sub}
Let $E = (\xi_{ij})_{1 \leq i \leq m, 1 \leq j \leq n}$ be a $m \times n$ real random matrix whose entries $\xi_{ij}  $ are iid copies of a sub-exponential random variable $\xi$ with constant $K$, i.e. $\Prob(|\xi| > t) \le \exp(1-t/K)$ for all $t>0$.  Assume $\xi$ has mean 0 and variance 1. Then there are constants $C_1, c_1>0$ (depending only on $K$) such that for any fixed unit vectors $u \in \mathbb{R}^m, v \in \mathbb{R}^n$ and any $t > 0$, one has
\begin{equation*}
	\Prob( |u^\mathrm{T} E v| \geq t) \leq C_1 \exp \left( -c_1 t \right). 
\end{equation*}

\end{lemma}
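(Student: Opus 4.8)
The plan is to reduce the claimed bound to a statement about a fixed unit vector and then pay for the union over a net, exactly in the spirit of the earlier net arguments in the excerpt (Lemmas \ref{lemma:r-norm} and \ref{lemma:j-largest}). Concretely, for fixed unit vectors $u \in \mathbb{R}^m$ and $v \in \mathbb{R}^n$, the bilinear form $u^\mathrm{T} E v = \sum_{i,j} u_i v_j \xi_{ij}$ is a linear combination of the independent, mean-zero sub-exponential variables $\xi_{ij}$ with coefficients $a_{ij} = u_i v_j$ satisfying $\sum_{i,j} a_{ij}^2 = \|u\|^2 \|v\|^2 = 1$ and $\max_{i,j}|a_{ij}| \leq 1$. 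So the heart of the matter is a Bernstein-type tail bound for a weighted sum of iid centered sub-exponential random variables whose weights are $\ell^2$-normalized and uniformly bounded by $1$.

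First I would record the sub-exponential moment/MGF estimate: if $\xi$ is sub-exponential with constant $K$, mean $0$, variance $1$, then there is $\lambda_0 = \lambda_0(K) > 0$ and $C_K > 0$ with $\E \exp(\lambda \xi) \leq \exp(C_K \lambda^2)$ for $|\lambda| \leq \lambda_0$; this is standard (see \cite[Proposition 5.16]{RV} or the discussion preceding it in \cite{RV}). Then, for $|\lambda| \leq \lambda_0$, independence gives
$$
\E \exp\!\Big( \lambda \sum_{i,j} a_{ij} \xi_{ij} \Big) = \prod_{i,j} \E \exp(\lambda a_{ij} \xi_{ij}) \leq \exp\!\Big( C_K \lambda^2 \sum_{i,j} a_{ij}^2 \Big) = \exp(C_K \lambda^2),
$$
where we used $|\lambda a_{ij}| \leq |\lambda| \leq \lambda_0$ since $|a_{ij}| \leq 1$. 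A Markov/Chernoff argument optimizing over $\lambda \in (0,\lambda_0]$ then yields a two-regime bound: $\Prob(|u^\mathrm{T} E v| \geq t) \leq 2\exp(-t^2/(4C_K))$ for $t \leq 2C_K\lambda_0$ and $\leq 2\exp(-\lambda_0 t /2)$ for $t > 2C_K\lambda_0$. In either regime this is dominated by $C_1' \exp(-c_1' t)$ for suitable $C_1', c_1' > 0$ depending only on $K$, because on the Gaussian regime $t$ is bounded so $t^2 \gtrsim t$. This already proves the lemma with constants depending only on $K$, as claimed; strictly speaking it is just a clean restatement of \cite[Proposition 5.16]{RV}, so one could alternatively cite that result directly and only sketch the one-line reduction $u^\mathrm{T} E v = \langle a, (\xi_{ij}) \rangle$ with $\|a\|_2 = 1$, $\|a\|_\infty \leq 1$.

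The main obstacle, such as it is, is purely bookkeeping: one must be careful that the two tail regimes of the Bernstein inequality get absorbed into a single clean exponential $C_1 \exp(-c_1 t)$, and that all constants are tracked as functions of $K$ alone (in particular $\lambda_0$, $C_K$, and the resulting $C_1, c_1$). There is no net argument needed here at all — the lemma is about a \emph{fixed} pair of unit vectors, so the only real content is the sub-exponential Bernstein bound, which is exactly \cite[Proposition 5.16]{RV}. Accordingly I would present the proof in at most a few lines: reduce to a normalized weighted sum of iid centered sub-exponential variables, invoke the MGF bound, apply Chernoff, and collapse the two regimes; or simply cite \cite{RV} and note the reduction.
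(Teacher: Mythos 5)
Your argument is correct and matches the paper's treatment: the paper offers no proof of this lemma at all, simply citing it as Proposition 5.16 of \cite{RV}, and your reduction $u^\mathrm{T} E v=\sum_{i,j}u_i v_j\xi_{ij}$ to a normalized weighted sum of iid centered sub-exponential variables, followed by the MGF bound and Chernoff, is precisely the standard proof of that proposition (no net is needed, as you say). One small imprecision: in the regime $t\le 2C_K\lambda_0$ the claim ``$t^2\gtrsim t$'' fails as $t\to 0$; the correct way to collapse the two regimes is to note that $t$ is bounded \emph{above} there, so that $c_1 t - t^2/(4C_K)\le 2c_1C_K\lambda_0$ and hence $C_1\exp(-c_1t)$ dominates once $C_1\ge 2\exp(2c_1C_K\lambda_0)$ --- a cosmetic fix that does not affect the conclusion.
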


Finally, let us point out that  the assumption that the entries are independent is not necessary. As an example, we mention random orthogonal matrices. 
For another example, one can consider the elliptic ensembles;  this can be verified using standard truncation and concentration results, see for instance \cite{KS, LT, McDiarmid, RV} and \cite[Chapter 5]{BS}.

\section{An application: The matrix recovery problem}  \label{section:app}

 The matrix recovery problem  is the following: $A$ is a large unknown matrix.  We can only observe its noisy image  $A+E$, or in some cases just a small part of it.  We would like to reconstruct $A$  or estimate an important parameter as accurately as possible from this observation.  

 Consider a deterministic $m \times n$ matrix $$A = (a_{ij})_{1 \leq i \leq m, 1 \leq j \leq n.}$$  
Let $Z$ be a random matrix of the same size whose entries
 $\{z_{ij} : 1 \leq i \leq m, 1 \leq j \leq n \}$  are independent random variables with mean zero and unit variance.   For convenience, we will assume that $\| Z \| _{\infty} := \max_{i,j} |z_{ij}| \le K$, for some fixed $K >0$, with probability $1$.

Suppose  that  we have only partial access to the noisy data $A+Z$. Each entry of this matrix is observed with probability $p$ and 
 unobserved with probability $1-p$ for some small $p$.  We will write $0$ if the entry is not observed.  Given this sparse observable data matrix $B$, the  task is to reconstruct $A$. 
 
 The matrix completion problem is a central one in data analysis, and there is a large collection of literature focusing on the low rank case; see  \cite{AM,CCS,CP,CR,CRT,CT,Cest,KMO,KMO2,Krank,KLT,MHT,NW,RVsamp} and references therein.   A representative example here is the Netflix problem, where $A$ is the matrix of ratings (the rows are viewers, the columns are movie titles, and entries are ratings). 
 
 In this section, we are going to use our new results to study this problem.  The main novel feature here is  that our analysis allows us to approximate  {\it any given column (or row)}  with high probability.  For instance, in the  Netflix problem, 
 one can figure out the ratings of any given individual, or any given movie. 
 
  In earlier algorithms we know of, the approximation was mostly  done for the Frobenius norm of the whole matrix.  Such a result is equivalent to saying that a {\it random} row or column is well approximated, but 
  cannot guarantee anything about a specific row or column.  
  
  Finally, let us mention that there are   algorithms which can recover $A$ precisely, but these work only if $A$ satisfies certain structural assumptions \cite{CCS,CP,CR,CRT,CT}. 
 
Without loss of generality,  we assume $A$ is a square  $n \times n$  matrix.  The rectangular case follows by applying the analysis below to an $(n+m) \times (n+m)$  
to the matrix $\tilde{A}$ defined in \eqref{eq:def:tildeA}.  

  Let $A$ be a  $n \times n$ deterministic matrix with rank $r$ where  $\sigma_1 \geq \cdots \geq \sigma_r > 0$ are the singular values  with corresponding singular vectors $u_1, \ldots, u_r$.  
  Let $\chi_{ij}$ be iid indicator random variables with $\Prob (\chi_{ij}=1)=p$.  The entries of the sparse matrix $B$ can be written as 
  \begin{equation*} b_{ij} = (a_{ij } +z_{ij} ) \chi_{ij} = p a_{ij} + a_{ij} (\chi_{ij} -p) + z_{ij} \chi_{ij} = pa_{ij} + f_{ij}, \end{equation*}  where 
  \begin{equation*} f_{ij} :=  a_{ij} (\chi_{ij} -p) + z_{ij} \chi_{ij} . \end{equation*} It is clear that the $f_{ij}$ are independent random variables with mean 0 and variance 
  $\sigma_{ij}^2 =  a_{ij}^2 p(1-p) + p $.  This way, we can write $\frac{1}{p} B$ in the form $A + E$, where  $E$ is the  random matrix with independent entries $e_{ij} := p^{-1} f_{ij}$.  We assume  $p \le 1/2$; in fact, our result 
  works for $p$ being a negative power of $n$.

 Let $1 \le j \le r$ and consider  the subspace $U$ spanned by $u_1, \dots, u_j$ and $V$ spanned by $v_1, \dots, v_j$, where $u_i$ (alternatively $v_i)$  is the $i$-th singular vector of 
 $A$ (alternatively $B$).
 Fix any $1 \le m  \le n$ and 
  consider the $m$-th columns
  of $A$ and $A+E$. Denote them by 
  $x$ and $\tilde x $, respectively.  We have 
  \begin{equation*}  \| x- P_{V} \tilde x\| \le \| x - P_U x\| + \| P_U x - P_U \tilde x\| + \| P_U \tilde x - P_V \tilde x \|. \end{equation*} 
  Notice that  $P_V \tilde x $  is efficiently computable given $B$ and $p$.
  (In fact,  we can estimate $p$ very well by the density of $B$, so we don't even need to know $p$.)  In the remaining part of the analysis, we 
 will estimate the three error terms on the right-hand side.

  We will make use of the following lemma, which is a variant of \cite[Lemma 2.2]{TVdet}; see also \cite{VW} where results of this type are discussed in depth.

 \begin{lemma} \label{lemma:projection} Let $X$ be a random vector in $\R^n$ whose coordinates $x_i, 1\le i \le n$  are independent random variables with mean 0, variance at most $\sigma^2$, and are bounded in absolute value by $1$. Let $H$ be a fixed subspace of dimension $d$ and $P_H (X)$ be the projection of $X$ onto $H$. Then 
 \begin{equation} \label{eqn:distance} \Prob \left(  \| P_H (X) \| \ge \sigma d^{1/2} +  t \right)  \le C  \exp (-c t^2 )  ,\end{equation}  
 where $c, C>0$ are absolute constants. 
 
 \end{lemma}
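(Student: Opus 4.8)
The plan is to regard $f(x):=\|P_H(x)\|$ as a function on $\R^n$ and exploit two structural features of it. First, $f$ is \emph{convex}: $P_H$ is linear and the Euclidean norm is convex, so $f(\lambda x+(1-\lambda)y)\le \lambda f(x)+(1-\lambda)f(y)$. Second, $f$ is \emph{$1$-Lipschitz} with respect to the Euclidean metric, since $|f(x)-f(y)|\le \|P_H(x-y)\|\le\|x-y\|$ because orthogonal projections are contractions. Because the coordinates $x_i$ of $X$ are independent and bounded in $[-1,1]$, the concentration inequality for convex Lipschitz functions of independent bounded random variables (Talagrand's convex distance inequality; see, e.g., \cite{LT}, or the discussions in \cite{TVdet, VW}) applies and yields a \emph{dimension-free} sub-Gaussian tail
$$ \Prob\big( f(X) \ge \E f(X) + t \big) \le C\exp(-c t^2), $$
for absolute constants $c,C>0$ — the fact that the coordinate intervals have length $2$ rather than $1$ only affects these constants.

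It then remains to bound $\E f(X)$, which I would do through the second moment. Writing the projection matrix $P_H=P_H^2=P_H^{\mathrm T}$, we have $\|P_H X\|^2=X^{\mathrm T}P_H X=\sum_{i,j}(P_H)_{ij}x_ix_j$, and since the $x_i$ are independent with mean $0$ and variance at most $\sigma^2$, all off-diagonal terms vanish in expectation and $\E\|P_H X\|^2=\sum_i(P_H)_{ii}\,\V(x_i)\le\sigma^2\,\tr P_H=\sigma^2 d$. By Cauchy--Schwarz, $\E f(X)\le(\E f(X)^2)^{1/2}\le\sigma d^{1/2}$. Substituting this into the concentration bound gives
$$ \Prob\big( \|P_H(X)\| \ge \sigma d^{1/2}+t \big) \le \Prob\big( f(X)\ge \E f(X)+t\big) \le C\exp(-ct^2), $$
which is exactly \eqref{eqn:distance}.

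The one delicate point is that the exponent must be independent of $n$ (and of $d$). A plain bounded-differences / Azuma argument on $f$ would only deliver a bound of the shape $\exp(-ct^2/d)$, and applying a Hanson--Wright inequality to the quadratic form $X^{\mathrm T}P_H X$ produces an exponent of order $\min(t^4/d,\,t^2)$, which is likewise too weak for $t\lesssim\sqrt d$. The resolution — and the step one must get right — is to work with the \emph{norm} $\|P_H X\|$ rather than its square, so that the function stays $1$-Lipschitz, and to combine Lipschitzness with convexity, which is precisely what Talagrand's inequality requires in order to produce the dimension-free rate. If one prefers to avoid quoting the convex concentration inequality in its mean form, one can run the same argument with a median $M$ of $f(X)$ in place of $\E f(X)$, note that $|\E f(X)-M|\le\E|f(X)-M|\le\int_0^\infty C\exp(-cs^2)\,ds$ is an absolute constant $C_0$, and absorb it: for $t\ge 2C_0$ one has $\Prob(f(X)\ge\sigma d^{1/2}+t)\le\Prob(f(X)-M\ge t/2)\le C\exp(-ct^2/4)$, while for $t<2C_0$ the claimed bound is trivial after enlarging $C$.
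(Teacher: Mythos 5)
Your proof is correct, and it follows exactly the route behind the result the paper cites: the paper gives no proof of Lemma \ref{lemma:projection}, referring instead to \cite[Lemma 2.2]{TVdet} and \cite{VW}, whose argument is precisely Talagrand's concentration inequality for convex $1$-Lipschitz functions of bounded independent coordinates together with the second-moment bound $\E\|P_H X\|^2=\sum_i (P_H)_{ii}\V(x_i)\le\sigma^2\tr P_H=\sigma^2 d$. Your handling of the median-versus-mean adjustment and your remark on why bounded-differences or Hanson--Wright would be too weak are both accurate.
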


  The first term $\| x -P_U  x \|$ is  bounded from above by $\sigma_{j+1}$.  The second term has the form $\| P_U X \|$, where $X:=x -\tilde x$ is the random vector
  with independent entries, which is the $m$-th column of   $E$.  Notice that entries of $X$ are bounded (in absolute value) by   $\alpha : = p^{-1} (\|x\| _{\infty}+  K)$ with probability $1$. Applying Lemma \ref{lemma:projection} (with the proper normalization),   we obtain 
\begin{equation} \label{recovery3} 
	\Prob \left(   \| P_U X \| \geq  j^{1/2} \sqrt{ \frac{ \|x\|_{\infty}^2 + 1}{p} }+ t \right)  \le C \exp( -c t^2 \alpha^{-2} ) 
\end{equation} 
since $\sigma_{im}^2 \leq p^{-1} ( \|x\|_{\infty}^2 + 1)$.  
By setting $t := c^{-1/2} \alpha \lambda $,  \eqref{recovery3} implies  that, for any $\lambda > 0$, 
\begin{equation*} \| P_U X \| \le  j^{1/2} \sqrt{ \frac{ \|x\|_{\infty}^2 + 1}{p} }+ c^{-1/2} \lambda \alpha  \end{equation*} 
with probability at least $1-   C \exp(-\lambda^2 ) $.  

To bound  $\| P_U \tilde x - P_V \tilde x \|$, we appeal to Theorem \ref{thm:subspace}. Assume for a moment that $E$ is $(C_1, c_1, \gamma)$-concentrated 
  for some constants $C_1, c_1, \gamma > 0$.  Let $\delta_j := \sigma_j - \sigma_{j+1}$.  Then it follows that, for any $\lambda > 0$, 
\begin{equation*} \| P_U - P_V\| \le C \sqrt{j} \left(\frac{ \lambda^{2/\gamma} r^{1/\gamma} }{ \delta_{j} } + \frac{\|E \| }{\sigma_j} +\frac{\| E\|^2 }{ \sigma_j \delta_j } \right), \end{equation*}  
with probability at least 
$$ 1 - 6C_1 9^j \exp \left( -c_1 \frac{\delta_j^\gamma}{8^\gamma} \right) - 2C_1 9^{2r} \exp \left( -c_1 r \frac{\lambda^2}{4^\gamma} \right), $$
where $C$ is an absolute constant.  
  
Since 
$$ \| P_U \tilde{x} - P_V \tilde{x} \| \leq \|P_U - P_{V} \| \|\tilde{x} \|, $$
it remains to bound $\| \tilde{x} \|$.  We first note that $\|\tilde{x}\| \leq \|x\| + \|X \|$.  By Talagrand's inequality (see \cite{Tconc} or \cite[Theorem 2.1.13]{Tbook}) , we have
$$ \Prob \left( \|X \| \geq \E \|X\| + t \right) \leq C \exp(-c t^2 \alpha^{-2}). $$
In addition, 
$$ \E \|X\|^2 = \frac{1}{p^2} \sum_{i=1}^n \sigma_{im}^2 \leq \frac{1}{p} \left( \|x\|^2 + n \right). $$
Thus, we conclude that
$$ \|X\| \leq \sqrt{ \frac{\|x\|^2 + n }{p} } + c^{-1/2} \lambda \alpha $$
with probability at least $1 - C \exp(-\lambda^2)$.  

Putting the bounds together, we obtain Theorem \ref{theorem:recovery} below.

  \begin{theorem}  \label{theorem:recovery} Assume that $A$ has rank $r$ and $\| Z\|_{\infty} \le K $ with probability $1$. Assume that $E$  is 
  $(C_1, c_1, \gamma)$-concentrated for a trio of constants $C_1, c_1, \gamma >0$.  Let $m$ be an arbitrary index between $1$ and $n$, and let $x$ and $\tilde x$ be the $m$-th columns of $A$ and $\frac{1}{p} B$.  Let $1 \leq j \leq r$ be an integer, and let $V$ be the subspace spanned by the first $j$ singular vectors of $B$.  Let $\sigma_1 \ge \dots \ge \sigma_r > 0$ be the 
  singular values of $A$.   Assume $\delta_j := \sigma_j - \sigma_{j+1}$.  Then, for any $\lambda >0$,  
  \begin{equation*} 
  \| x - P_V (\tilde x) \| \le \sigma_{j+1} +  j^{1/2} \sqrt{ \frac{ \|x\|_{\infty}^2 + 1}{p} } + \mu \left( \sqrt{ \frac{ \|x\|^2 + n}{p} } + C \lambda \alpha \right) + C \lambda \alpha, 
   \end{equation*} 
with probability at least  
$$ 1 - C \exp(-\lambda^2) -  6C_1 9^j \exp \left( -c_1 \frac{\delta_j^\gamma}{8^\gamma} \right) - 2C_1 9^{2r} \exp \left( -c_1 r \frac{\lambda^2}{4^\gamma} \right), $$
  where 
  $$ \alpha := p^{-1} (\| x\| _{\infty} + K) \quad \text{and}\quad \mu:= C  \sqrt{j} \left(\frac{\lambda^{2/\gamma} r^{1/\gamma} }{ \delta_{j} } + \frac{\|E \| }{\sigma_j} +\frac{\| E\|^2 }{ \sigma_j \delta_j } \right), $$
and $C$ is an absolute constant.  
  \end{theorem}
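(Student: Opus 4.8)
The plan is to decompose the error $\| x - P_V(\tilde x)\|$ using the triangle inequality exactly as set up in the excerpt, namely
\begin{equation*}
\| x - P_V \tilde x \| \le \| x - P_U x\| + \| P_U x - P_U \tilde x\| + \| P_U \tilde x - P_V \tilde x \|,
\end{equation*}
and then bound each of the three terms separately, finally combining the three probabilistic estimates by a union bound. The first term is purely deterministic: since $U = \mathrm{Span}\{u_1,\dots,u_j\}$ is spanned by the top $j$ singular vectors of $A$ and $A$ has rank $r$, the best rank-$j$ approximation error in operator norm controls $\|x - P_U x\|$, giving $\| x - P_U x\| \le \sigma_{j+1}$ (this is immediate from the singular value decomposition of $A$ applied to its $m$-th column). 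This accounts for the $\sigma_{j+1}$ summand in the theorem.

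For the second term, I would observe that $P_U x - P_U \tilde x = P_U(x - \tilde x) = P_U X$, where $X$ is the $m$-th column of $-E$, a vector with independent mean-zero entries; the entries of $X$ equal $p^{-1} f_{im}$ and hence are bounded in absolute value by $\alpha := p^{-1}(\|x\|_\infty + K)$ almost surely, with coordinate variances $\sigma_{im}^2 = a_{im}^2(1-p)/p + 1/p \le p^{-1}(\|x\|_\infty^2 + 1)$. After rescaling $X$ by $\alpha$ so that its coordinates are bounded by $1$, Lemma \ref{lemma:projection} (with $H = U$, $d = j$, and the rescaled variance bound) yields
\begin{equation*}
\Prob\!\left( \|P_U X\| \ge j^{1/2}\sqrt{\tfrac{\|x\|_\infty^2 + 1}{p}} + t \right) \le C \exp(-c\, t^2 \alpha^{-2}),
\end{equation*}
and setting $t = c^{-1/2}\lambda\alpha$ converts this into the stated form with failure probability $C\exp(-\lambda^2)$. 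This accounts for the $j^{1/2}\sqrt{(\|x\|_\infty^2+1)/p} + C\lambda\alpha$ summands.

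For the third term I would write $\| P_U \tilde x - P_V \tilde x\| \le \|P_U - P_V\|\,\|\tilde x\|$ and bound the two factors. The operator-norm factor $\|P_U - P_V\| = \sin\angle(U,V)$ is controlled by Theorem \ref{thm:subspace} applied with $t$ chosen so that $t^\gamma r = \lambda^2 r$, i.e. $t = \lambda^{2/\gamma}$, which produces precisely the quantity $\mu/\|\tilde x\|$ — well, rather $\mu$ is $\|P_U-P_V\|\cdot C$ absorbed, so I would define $\mu$ as in the statement and note $\|P_U - P_V\| \le \mu$ with the stated probability $1 - 6C_1 9^j\exp(-c_1\delta_j^\gamma/8^\gamma) - 2C_1 9^{2r}\exp(-c_1 r\lambda^2/4^\gamma)$. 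For the norm factor, $\|\tilde x\| \le \|x\| + \|X\|$, and $\|X\|$ concentrates around $\E\|X\|$ by Talagrand's inequality, with $\E\|X\|^2 = p^{-2}\sum_i \sigma_{im}^2 \le p^{-1}(\|x\|^2 + n)$ by Jensen; this gives $\|X\| \le \sqrt{(\|x\|^2+n)/p} + c^{-1/2}\lambda\alpha$ with probability $1 - C\exp(-\lambda^2)$, hence $\|\tilde x\| \le \|x\| + \sqrt{(\|x\|^2+n)/p} + C\lambda\alpha$. Multiplying $\mu$ by this bound produces the $\mu(\sqrt{(\|x\|^2+n)/p} + C\lambda\alpha)$ summand (absorbing $\|x\|$ into the leading constant of $\mu$, or alternatively noting $\|x\| \le \sqrt{\|x\|^2+n}$). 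Assembling the three bounds and taking a union bound over the three (or four) failure events gives the claimed inequality and probability.

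The main obstacle is bookkeeping rather than conceptual: one must carefully track the exact boundedness constant $\alpha$ and the variance proxies $\sigma_{im}^2$ through the rescalings required to apply Lemma \ref{lemma:projection} and Talagrand's inequality (both of which are stated for coordinates bounded by $1$), and one must verify that the choice $t = \lambda^{2/\gamma}$ in Theorem \ref{thm:subspace} reproduces exactly the advertised probability and the $\lambda^{2/\gamma}r^{1/\gamma}/\delta_j$ term in $\mu$. A secondary subtlety is that Theorem \ref{thm:subspace}'s hypotheses require $E$ to be $(C_1,c_1,\gamma)$-concentrated — this is assumed in the statement, but in applications one would need Lemma \ref{lemma:conc-nonsym} or a truncation argument to verify it for the specific noise $E$ with entries $e_{ij} = p^{-1}f_{ij}$; since the theorem takes concentration as a hypothesis, no further work is needed here. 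Everything else is a routine union bound.
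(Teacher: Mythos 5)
Your proposal is correct and follows essentially the same route as the paper: the identical three-term triangle-inequality decomposition, with $\sigma_{j+1}$ for the deterministic term, Lemma \ref{lemma:projection} for $\|P_U X\|$, and Theorem \ref{thm:subspace} (with $t=\lambda^{2/\gamma}$) combined with a Talagrand bound on $\|\tilde x\|$ for the third term. The bookkeeping points you flag (the constant $\alpha$, the variance proxy, and absorbing $\|x\|$ into $\sqrt{(\|x\|^2+n)/p}$) are handled exactly as you describe in the paper's own argument.
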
 
  
As this theorem is a bit technical, let us consider a special, simpler case. Assume that all entries of $A$ are of order $\Theta(1)$ and $p=\Theta(1)$.   Thus, any column $x$ has length $ \Theta (n^{1/2})$. Assume furthermore that $j=r=\Theta(1)$ and $\sigma_r = \Omega(n^{1/2+\varepsilon})$ for some $\varepsilon > 0$.  Then our analysis yields

\begin{corollary} 
There exists $c_0 > 0$ (depending only on $\varepsilon$) such that, for any given column $x$,  
$$ \| x - P_V (\tilde x) \|  = O(  n^{-c_0} \|x\|) $$
with probability $1-o(1)$. 
\end{corollary}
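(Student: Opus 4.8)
The plan is to apply Theorem \ref{theorem:recovery} with a judicious choice of the free parameter $\lambda$ and then track the order of magnitude of every term under the stated simplifying assumptions. First note what those assumptions give: a column with $n$ entries each of size $\Theta(1)$ has $\|x\| = \Theta(n^{1/2})$ and $\|x\|_\infty = \Theta(1)$; since $p = \Theta(1)$ we get $\alpha = p^{-1}(\|x\|_\infty + K) = \Theta(1)$; and since $j = r$ we have $\sigma_{j+1} = 0$, whence $\delta_j = \sigma_j = \sigma_r = \Omega(n^{1/2+\varepsilon})$, while $j^{1/2} = r^{1/2} = \Theta(1)$.

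Next I would check the hypotheses of Theorem \ref{theorem:recovery}. The entries $e_{ij} = p^{-1}f_{ij}$ of $E$ are independent, have mean zero, and are bounded in absolute value by $p^{-1}(\max_{i,j}|a_{ij}| + K) = \Theta(1)$, so Lemma \ref{lemma:conc-nonsym} shows that $E$ is $(C_1,c_1,2)$-concentrated with $C_1 = 2$ and $c_1 = \Theta(1)$; in particular $\gamma = 2$. Fact \ref{fact2} then gives $\|E\| \le C'n^{1/2}$ with probability at least $1 - C_1\exp(-c'n)$, and I would work on this event throughout.

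Then I would substitute into Theorem \ref{theorem:recovery} with $\lambda = \log n$. Since $\gamma = 2$ we have $\lambda^{2/\gamma} = \log n$ and $r^{1/\gamma} = \Theta(1)$, so the three terms defining $\mu$ are $O((\log n)\,n^{-1/2-\varepsilon})$, $O(\|E\|/\sigma_j) = O(n^{-\varepsilon})$, and $O(\|E\|^2/(\sigma_j\delta_j)) = O(n^{-2\varepsilon})$, hence $\mu = O(n^{-\varepsilon})$. The remaining terms in the bound of Theorem \ref{theorem:recovery} are $\sigma_{j+1} = 0$, the term $j^{1/2}\sqrt{(\|x\|_\infty^2+1)/p} = O(1)$, the term $C\lambda\alpha = O(\log n)$, and $\mu\bigl(\sqrt{(\|x\|^2+n)/p} + C\lambda\alpha\bigr) = O(n^{-\varepsilon})\cdot O(n^{1/2}) = O(n^{1/2-\varepsilon})$. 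Summing, $\|x - P_V(\tilde x)\| = O(n^{1/2-\varepsilon})$, and since $\|x\| = \Theta(n^{1/2})$ this is $O(n^{-\varepsilon}\|x\|)$, so $c_0$ can be taken to be $\varepsilon$ (shrinking it slightly to absorb the $O(\log n)$ and $O(1)$ contributions if desired). For the probability, with $\lambda = \log n$ the terms $C\exp(-\lambda^2)$ and $2C_1 9^{2r}\exp(-c_1 r\lambda^2/4^\gamma)$ are $o(1)$ because $9^{2r} = \Theta(1)$; the term $6C_1 9^j\exp(-c_1\delta_j^\gamma/8^\gamma) = \exp(-\Omega(n^{1+2\varepsilon}))$ is $o(1)$; and the event $\|E\| \le C'n^{1/2}$ fails with probability $O(\exp(-c'n))$. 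Combining, the conclusion holds with probability $1 - o(1)$.

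I expect the only delicate point to be the bookkeeping of the error terms: $\lambda$ must be taken large enough that all the exponential failure probabilities vanish, yet small enough (here $\lambda = n^{o(1)}$) that the additive $C\lambda\alpha$ contributions stay of lower order than $n^{1/2-\varepsilon}$. The choice $\lambda = \log n$ does both, and there is no substantive obstacle beyond this routine estimation.
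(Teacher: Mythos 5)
Your proposal is correct and is exactly the intended instantiation of Theorem \ref{theorem:recovery}: verify the concentration hypothesis via Lemma \ref{lemma:conc-nonsym} (so $\gamma=2$), bound $\|E\|=O(\sqrt n)$ via Fact \ref{fact2}, take $\lambda=\log n$ so that all failure probabilities are $o(1)$ while the additive $C\lambda\alpha$ terms stay $O(\log n)$, and observe that the dominant term $\mu\sqrt{(\|x\|^2+n)/p}=O(n^{1/2-\varepsilon})=O(n^{-\varepsilon}\|x\|)$. The paper leaves these details implicit ("our analysis yields"), and your bookkeeping — including noting that $j=r$ kills the $\sigma_{j+1}$ term and that $c_0$ must be shrunk slightly below $\min\{\varepsilon,1/2\}$ to absorb the lower-order contributions — fills them in correctly.
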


\subsection*{Acknowledgements}
The authors would like to thank Nicholas Cook and David Renfrew for useful comments.

\end{document}